\newtheorem{theorem}{Theorem}[section]
\newtheorem{lemma}[theorem]{Lemma}
\newtheorem{proposition}[theorem]{Proposition}
\newtheorem{corollary}[theorem]{Corollary}
\newtheorem{Conditions}[theorem]{Conditions}
\theoremstyle{definition}
\newtheorem{definition}[theorem]{Definition}
\theoremstyle{remark}
\newtheorem{remark}[theorem]{Remark}
\numberwithin{equation}{section}
\newcommand{\matrixxx}[2]{\left(\begin{matrix}
#1\\
#2
\end{matrix} \right)}
\newcommand{\smallmatrixx}[4]{(\begin{smallmatrix}
#1 & #2\\
#3 & #4
\end{smallmatrix})}
\newcommand{\matrixx}[4]{\left(\begin{matrix}
#1 & #2\\
#3 & #4
\end{matrix} \right)}
\newcommand{\tensorspace}{\mathcal{O}_K\otimes_{\mathbb{Z}}\mathbb{Z}_p}
\newcommand{\pprime}{\mathfrak{p}}
\newcommand{\bonitaa}[4]{\scalebox{0.8}{%
$\matrixx{#1}{#2}{#3}{#4}$}}
\begin{document}

\title{Functional equation of the $p$-adic $L$-function of Bianchi modular forms}

\author{Luis Santiago Palacios}

\keywords{Bianchi modular forms, $p$-adic $L$-function, functional equation, $p$-adic families of Bianchi modular forms}

\begin{abstract}
Let $K$ be an imaginary quadratic field with class number 1, in this paper we obtain the functional equation of the $p$-adic $L$-function of small slope $p$-stabilised Bianchi modular forms. Then, using $p$-adic families of Bianchi modular forms, we extend our result to $\Sigma$-smooth base-change Bianchi modular forms. 
\end{abstract}

\maketitle{}

\section{Introduction} 

Fix $p$ a rational prime. Via the theory of overconvergent modular symbols, Pollack and Stevens (see \cite{pollack2011} and \cite{pollack2013critical}, or for an exposition, \cite{pollack2014overconvergent}) gave a method of constructing the $p$-adic $L$-function of a suitable rational modular form.

In \cite{chris2017} was developed an analogue of the work of \cite{pollack2011} for the case of Bianchi modular forms, that is, the case of automorphic forms for $\mathrm{GL}_2$ over an imaginary quadratic field $K$. Let $\mathcal{F}\in S_{(k,k)}(\Gamma_0(\mathfrak{n}))$ be a cuspidal Bianchi modular form of weight $(k,k)$ and level $\mathfrak{n}$, if $\mathcal{F}$ is an eigenform of small slope and $(p)|\mathfrak{n}$, in \cite{chris2017} was constructed  $L_p(\mathcal{F},-)$, the $p$-adic $L$-function of $\mathcal{F}$ on $\mathfrak{X}(\mathrm{Cl}_K(p^{\infty}))$, the two-dimensional rigid space of $p$-adic characters on the ray class group $\mathrm{Cl}_K(p^{\infty})$.

In this paper, $K$ has class number 1.\footnote{We expect our results follow for higher class number taking direct sum over class group (as in \cite{chris2017}).}

Our first result is the functional equation of $L_p(\mathcal{F}_p,-)$ where $\mathcal{F}_p$ is a small slope Bianchi modular form obtained by successively stabilising at each different prime $\pprime$ above $p$ a newform $\mathcal{F}$. 

A functional equation for the classical $L$-function attached to a Bianchi modular form is given in \cite{JL}. In Section \ref{L-function}, we recover the following reformulation:

\begin{theorem}
Let $\mathcal{F}\in S_{(k,k)}(\Gamma_0(\mathfrak{n}))$ be a Bianchi newform and $\psi$ be a Hecke character of $K$ of conductor $\mathfrak{f}$ with $(\mathfrak{n},\mathfrak{f})=1$ and infinity type $0 \leqslant (q,r) \leqslant (k,k)$, we have 
\begin{equation*}
\Lambda(\mathcal{F},\psi)=\frac{-\epsilon(\mathfrak{n})|\nu|^k\tau(\psi|\cdot|_{\mathbb{A}_K}^{-k})}{\psi_\mathfrak{f}(-\nu)\psi_\infty(-\nu)\tau(\psi^{-1})}\Lambda(\mathcal{F},\psi^{-1}|\cdot|_{\mathbb{A}_K}^{k}),
\end{equation*}
where $\mathfrak{n}=(\nu)$, and $\epsilon(\mathfrak{n})=\pm1$ is the eigenvalue of $\mathcal{F}$ for the Fricke involution $W_\mathfrak{n}$.
\label{T.intro.1}
\end{theorem}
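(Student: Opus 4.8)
The plan is to derive the functional equation from a Mellin/period integral representation of the completed $L$-function together with the behaviour of a newform under the Fricke involution.

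First I would realise $\Lambda(\mathcal{F},\psi)$ as a period integral of $\mathcal{F}$ twisted by $\psi$ over the idele class group of $K$, i.e. a Hecke-type integral that simultaneously packages the Dirichlet series $L(\mathcal{F},\psi)$ and the archimedean $\Gamma$-factors determined by the infinity type $(q,r)$. The hypothesis $(\mathfrak{n},\mathfrak{f})=1$ is what guarantees that the local integrals at the primes dividing $\mathfrak{f}$ are unobstructed and collapse to Gauss sums rather than more complicated local factors; the constraint $0\leqslant(q,r)\leqslant(k,k)$ is exactly the range on which the involution $\psi\mapsto\psi^{-1}|\cdot|_{\mathbb{A}_K}^{k}$ (which sends $(q,r)$ to $(k-q,k-r)$) preserves the critical strip, so the archimedean computation is symmetric.

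Next I would apply the Fricke involution $W_\mathfrak{n}$. Since $\mathcal{F}$ is a newform, $W_\mathfrak{n}\mathcal{F}=\epsilon(\mathfrak{n})\mathcal{F}$ with $\epsilon(\mathfrak{n})=\pm1$. Performing the change of variables $g\mapsto gW_\mathfrak{n}$ in the period integral, where $W_\mathfrak{n}$ is represented by $\matrixx{0}{-1}{\nu}{0}$ with $\mathfrak{n}=(\nu)$ (on the symmetric space this is the hyperbolic analogue of the inversion $z\mapsto-1/(\nu z)$), transforms the integral attached to $\psi$ into the integral attached to the contragredient twist $\psi^{-1}|\cdot|_{\mathbb{A}_K}^{k}$, at the cost of several explicit factors. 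The scaling matrix contributes the factor $|\nu|^{k}$ and the value of the character at $-\nu$, split into its finite and archimedean parts $\psi_\mathfrak{f}(-\nu)$ and $\psi_\infty(-\nu)$; unfolding the additive twist at the ramified primes replaces the local integrals by the Gauss sums $\tau(\psi|\cdot|_{\mathbb{A}_K}^{-k})$ and $\tau(\psi^{-1})$; and the archimedean integral, combined with the sign convention for $W_\mathfrak{n}$ on weight-$(k,k)$ forms, produces the overall $-\epsilon(\mathfrak{n})$.

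The hard part will be precisely this constant-chasing: matching the normalisation of the two Gauss sums, correctly propagating the shift of infinity type under $\psi\mapsto\psi^{-1}|\cdot|_{\mathbb{A}_K}^{k}$ through the local computations, and pinning down the global sign. Rather than rederive everything from scratch, the most economical route is to take the functional equation of \cite{JL} in its original normalisation and translate it into the present conventions for $\Lambda$, for the Gauss sums $\tau$, and for the infinity type; the stated theorem is exactly the shape this translation takes, and the residual work is reconciling the two sets of local constants and verifying that the product of local factors assembles into the displayed root number.
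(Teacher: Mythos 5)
Your skeleton---an integral representation of $\Lambda(\mathcal{F},\psi)$, an application of a Fricke-type involution, and Gauss sums mediating between additive and multiplicative twists---is indeed the skeleton of the paper's proof (and the paper itself remarks the theorem is a reformulation of \cite{JL}). But there is a concrete gap at your central step. You propose to perform the change of variables $g\mapsto gW_\mathfrak{n}$ in the $\psi$-twisted period integral and claim this transforms the integral for $\psi$ into the one for $\psi^{-1}|\cdot|_{\mathbb{A}_K}^{k}$. As stated this fails: the object that packages the additive twists is the twisted form $\mathcal{F}_{\psi}=\sum_{b}\psi_\mathfrak{f}(b)\,\mathcal{F}|_{\smallmatrixx{1}{b/f}{0}{1}}$, which has level $\mathfrak{m}=\mathfrak{n}\mathfrak{f}^2$, not $\mathfrak{n}$, so $W_\mathfrak{n}$ does not act usefully on it. The involution one must apply is $W_\mathfrak{m}$ (in the paper this is the substitution $t\mapsto 1/(|m|t)$, $m=\nu f^2$, combined with Lemma \ref{T1.19}), and the heart of the argument is then the compatibility
\begin{equation*}
\mathcal{F}_{\psi}|_{W_\mathfrak{m}}=\varphi_\mathfrak{c}(f)^{-1}\psi_\mathfrak{f}(-\nu)^{-1}\left(\mathcal{F}|_{W_\mathfrak{n}}\right)_{\psi^{-1}},
\end{equation*}
i.e.\ Proposition \ref{T6.7}, the Bianchi analogue of Iwaniec's Theorem 7.5, proved by an explicit matrix identity that requires solving $bv\nu\equiv-1 \pmod{f}$. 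This lemma is exactly where the hypothesis $(\mathfrak{n},\mathfrak{f})=1$ and the factor $\psi_\mathfrak{f}(-\nu)$ enter, and only after it can the newform property $\mathcal{F}|_{W_\mathfrak{n}}=\epsilon(\mathfrak{n})\mathcal{F}$ be invoked. Your sketch never isolates this ingredient, and without it the ``several explicit factors'' you list cannot be produced. Note also that in the paper the Gauss sums do not arise from unfolding local integrals: they sit in the normalisation of the explicit formula of Theorem \ref{T8.2}, which is applied twice, once to $\psi$ (producing $\tau(\psi^{-1})$) and once to $\psi^{-1}|\cdot|_{\mathbb{A}_K}^{k}$, of conductor $\mathfrak{f}$ and infinity type $(k-q,k-r)$ (producing $\tau(\psi|\cdot|_{\mathbb{A}_K}^{-k})$).

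Your fallback---quote the functional equation of \cite{JL} and translate normalisations---is legitimate in principle, but the translation \emph{is} the entire content of the theorem, and you explicitly defer it (``the residual work is reconciling the two sets of local constants''). The paper instead gives a self-contained computation: Theorem \ref{T8.2} writes $\Lambda(\mathcal{F},\psi)$ as a finite sum $\sum_{b}\psi_\mathfrak{f}(b)c_{q,r}(b/f)$ of Mellin-type integrals of Fourier components, Hida's twisting operator identifies that sum with $\mathcal{F}_{\varphi}$ at the cusp $0$, and then the substitution above, Lemma \ref{T1.19}, Proposition \ref{T6.7} and newform-ness convert it into the corresponding sum of $c_{k-q,k-r}(b/f)$, after which Theorem \ref{T8.2} is applied again. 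To turn your proposal into a proof you must either establish the twist--Fricke compatibility displayed above, or genuinely carry out the translation of the \cite{JL} constants; as written, both alternatives are left as black boxes.
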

In theorem above, $\Lambda(\mathcal{F},\cdot)$ is the $L$-function of $\mathcal{F}$ renormalised by Deligne's $\Gamma$-factors at infinity, $\psi_\mathfrak{f}=\prod_{\mathfrak{q}|\mathfrak{f}}\psi_\mathfrak{q}$ with $\psi_{\mathfrak{q}}$ the restriction of $\psi$ to $K_{\mathfrak{q}}^\times$, $\psi_\infty$ is the infinite part of $\psi$, $\tau(\cdot)$ is the Gauss sum of \cite[\S1.2.3]{chris2017} and $|\cdot|_{\mathbb{A}_K}$ is the adelic norm.

We then obtain the functional equation of $L_p(\mathcal{F}_p,-)$ in the small slope case:

\begin{theorem}
Let $\mathcal{F}_p$ be a small slope $p$-stabilisation of a newform $\mathcal{F}\in S_{(k,k)}(\Gamma_0(\mathfrak{n}))$ with $\mathfrak{n}=(\nu)$ and $(p)\nmid\mathfrak{n}$. Then for all $\kappa\in\mathfrak{X}(\mathrm{Cl}_K(p^{\infty}))$, the distribution $L_p(\mathcal{F}_p,-)$ satisfies the following functional equation
\begin{equation*}
L_p(\mathcal{F}_p,\kappa)=-\epsilon(\mathfrak{n})\mathrm{N}(\mathfrak{n})^{k/2}\kappa(x_{-\nu,p})^{-1}L_p(\mathcal{F}_p,\kappa^{-1}\sigma_p^{k,k}),
\end{equation*}
where $\epsilon(\mathfrak{n})=\pm1$ is the eigenvalue of $\mathcal{F}$ for the Fricke involution $W_\mathfrak{n}$, $x_{-\nu,p}$ is the idele associated to $-\nu$ defined in Remark \ref{e13.10} and $\sigma_p^{k,k}$ is as in equation (\ref{e12.1}).
\label{T.intro.2}
\end{theorem}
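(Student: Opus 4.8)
The plan is to deduce the $p$-adic functional equation from the classical one of Theorem~\ref{T.intro.1} by comparing the two distributions at the Zariski-dense set of critical classical characters, where $L_p(\mathcal{F}_p,-)$ is pinned down by its interpolation property, and then invoking rigidity. Concretely, let $\kappa=\kappa_\psi$ be the $p$-adic character attached to a Hecke character $\psi$ of conductor $\mathfrak{f}$ dividing a power of $(p)$ and infinity type $0\leqslant(q,r)\leqslant(k,k)$; such $\kappa$ lie densely in $\mathfrak{X}(\mathrm{Cl}_K(p^{\infty}))$. For these $\kappa$ the interpolation formula of \cite{chris2017} expresses $L_p(\mathcal{F}_p,\kappa_\psi)$ as an explicit nonzero multiple of $\Lambda(\mathcal{F},\psi)$, the multiplier being a product of the $p$-stabilisation factors $\prod_{\mathfrak{p}\mid p}\lambda_{\mathfrak{p}}^{-m_{\mathfrak{p}}}$, a Gauss sum supported at $p$, and a period. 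I would first record this formula, together with the analogous one at the dual point.

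First I identify the dual character. Since $|\cdot|_{\mathbb{A}_K}^{k}$ is the algebraic Hecke character whose associated $p$-adic character is $\sigma_p^{k,k}$ of equation~(\ref{e12.1}), the character $\psi^{-1}|\cdot|_{\mathbb{A}_K}^{k}$ corresponds to $\kappa^{-1}\sigma_p^{k,k}$, as one checks on generators of $\mathrm{Cl}_K(p^{\infty})$. Its infinity type is $(k-q,k-r)$, again in the critical range, so the interpolation formula also applies at this point: the right-hand term $\Lambda(\mathcal{F},\psi^{-1}|\cdot|_{\mathbb{A}_K}^{k})$ of Theorem~\ref{T.intro.1} is a nonzero multiple of $L_p(\mathcal{F}_p,\kappa^{-1}\sigma_p^{k,k})$. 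Thus the functional equation maps the critical range to itself, and substituting Theorem~\ref{T.intro.1} into the first interpolation formula produces an identity between $L_p(\mathcal{F}_p,\kappa_\psi)$ and $L_p(\mathcal{F}_p,\kappa^{-1}\sigma_p^{k,k})$ up to an explicit constant.

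The main obstacle is matching that constant with $-\epsilon(\mathfrak{n})\mathrm{N}(\mathfrak{n})^{k/2}\kappa(x_{-\nu,p})^{-1}$. Here the hypothesis $(p)\nmid\mathfrak{n}$ is decisive: since $\nu$ is a $p$-adic unit and $\mathfrak{f}$ is a power of $(p)$ (so $\psi_\mathfrak{f}=\psi_p$), the classical factor $(\psi_\mathfrak{f}(-\nu)\psi_\infty(-\nu))^{-1}$ is exactly a $p$-adic character value, interpolating to $\kappa(x_{-\nu,p})^{-1}$ with $x_{-\nu,p}$ the idele of Remark~\ref{e13.10}; and $|\nu|^{k}=\mathrm{N}(\mathfrak{n})^{k/2}$. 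The delicate part is that the Gauss sums $\tau(\psi|\cdot|_{\mathbb{A}_K}^{-k})$ and $\tau(\psi^{-1})$, being supported at $p$, are precisely those produced by the two interpolation multipliers, so one must check that the Gauss-sum ratio, the eigenvalue powers $\prod_{\mathfrak{p}}\lambda_{\mathfrak{p}}^{-m_{\mathfrak{p}}}$, and the periods of the multiplier for $\psi$ cancel exactly against those of the multiplier for $\psi^{-1}|\cdot|_{\mathbb{A}_K}^{k}$, leaving no spurious $\lambda_{\mathfrak{p}}$- or $\mathrm{N}(\mathfrak{p})$-power. Verifying this cancellation, using the local Gauss-sum relations at $p$ and the compatibility of the two $p$-stabilisation factors, is where the real work lies.

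Having established
\[
L_p(\mathcal{F}_p,\kappa)=-\epsilon(\mathfrak{n})\mathrm{N}(\mathfrak{n})^{k/2}\kappa(x_{-\nu,p})^{-1}L_p(\mathcal{F}_p,\kappa^{-1}\sigma_p^{k,k})
\]
at every critical classical $\kappa=\kappa_\psi$, I conclude by density. By the small slope hypothesis $L_p(\mathcal{F}_p,-)$ is an admissible locally analytic distribution on the rigid space $\mathfrak{X}(\mathrm{Cl}_K(p^{\infty}))$; the map $\kappa\mapsto\kappa(x_{-\nu,p})^{-1}$ is rigid analytic and nonvanishing, and $\kappa\mapsto\kappa^{-1}\sigma_p^{k,k}$ is an automorphism of the space. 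Hence both sides are values of admissible distributions, and their difference vanishes on a Zariski-dense subset, so it vanishes identically, yielding the functional equation for all $\kappa\in\mathfrak{X}(\mathrm{Cl}_K(p^{\infty}))$.
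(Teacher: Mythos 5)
Your overall strategy coincides with the paper's (Lemma \ref{L9.3}, Proposition \ref{p14.1}, Theorem \ref{T14.2}): establish the identity at the locally algebraic characters $\psi_{p-\mathrm{fin}}$ attached to critical Hecke characters of conductor dividing $p^\infty$, then extend to all of $\mathfrak{X}(\mathrm{Cl}_K(p^{\infty}))$. The gap is in your extension step. The critical characters are \emph{not} Zariski-dense in $\mathfrak{X}(\mathrm{Cl}_K(p^{\infty}))$: choosing a topological generator $\gamma$ of a $\mathbb{Z}_p$-factor of $\mathrm{Cl}_K(p^{\infty})$, the nonzero rigid-analytic function
\begin{equation*}
\kappa\longmapsto \prod_{0\leqslant q,r\leqslant k}\log_p\bigl(\kappa(\gamma)\,\sigma_p^{q,r}(\gamma)^{-1}\bigr)
\end{equation*}
vanishes at \emph{every} critical character, because for $\kappa=\psi_{p-\mathrm{fin}}$ of infinity type $(q,r)$ the quantity $\kappa(\gamma)\sigma_p^{q,r}(\gamma)^{-1}$ is a root of unity and $\log_p$ annihilates roots of unity. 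So ``rigid analytic and zero on a (claimed) dense set'' cannot close the argument, and the word ``admissible'' alone does not exclude such functions: what is needed is the quantitative uniqueness theorem of Amice--V\'elu/Vishik type proved in \cite{loeffler2014p}, which says that an $(h_\pprime)_{\pprime|p}$-admissible distribution with $h_\pprime=v_p(\alpha_\pprime)<(k+1)/e_\pprime$ is determined by its values at the critical characters. This is precisely where the small slope bound enters quantitatively, and your argument never invokes that numerical bound. The paper's proof of Theorem \ref{T14.2} does exactly this: it forms $L_p'(\mathcal{F}_p,\kappa)=L_p(\mathcal{F}_p,\kappa)+\epsilon(\mathfrak{n})\mathrm{N}(\mathfrak{n})^{k/2}\kappa(x_{-\nu,p})^{-1}L_p(\mathcal{F}_p,\kappa^{-1}\sigma_p^{k,k})$, checks it is $(h_\pprime)$-admissible, and concludes $L_p'=0$ from \cite{loeffler2014p} and Proposition \ref{p14.1}.

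A second, smaller flaw: your claim that the interpolation multipliers are \emph{nonzero}, so that $\Lambda(\mathcal{F},\psi^{-1}|\cdot|_{\mathbb{A}_K}^{k})$ can be recovered from $L_p(\mathcal{F}_p,\kappa^{-1}\sigma_p^{k,k})$ by division, is false in general. Besides the Gauss sum, the factor $\lambda_\mathfrak{f}^{-1}$ and the period, the multiplier relating $L_p(\mathcal{F}_p,\cdot)$ to the $L$-value of the \emph{newform} $\mathcal{F}$ contains the Euler-type factors $Z_\pprime^{\alpha_\pprime}(\psi)$ and $Z_\pprime^{\alpha_\pprime}(\psi^{-1}|\cdot|_{\mathbb{A}_K}^{k})$ of (\ref{e3.2}), which can vanish (exceptional zeros). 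This is repairable, since when they vanish both sides of the desired identity vanish, but the paper sidesteps the case distinction entirely: Lemma \ref{L9.3} multiplies through by $\prod_\pprime Z_\pprime^{\alpha_\pprime}(\psi)$ \emph{before} substituting the classical functional equation, so that Proposition \ref{p14.1} only ever divides by the manifestly nonzero constant $Dw\tau(\psi|\cdot|_{\mathbb{A}_K}^{-k})/((-1)^{k+q+r}2\lambda_\mathfrak{f}\Omega_\mathcal{F})$; the Gauss sums then cancel against $\varepsilon(\mathcal{F},\psi)$ and the remaining factor simplifies to $-\epsilon(\mathfrak{n})\mathrm{N}(\mathfrak{n})^{k/2}\psi_{p-\mathrm{fin}}^{-1}(x_{-\nu,p})$, exactly the verification you defer as ``where the real work lies.''
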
 

The proof uses three ingredients, namely: (a) the main theorem in \cite{chris2017}, i.e. the construction and interpolation of $L_p(\mathcal{F},-)$ for a Bianchi modular form $\mathcal{F}$; (b) the complex functional equation obtained in Theorem \ref{T.intro.1} and (c) the work done in \cite{loeffler2014p} that uniquely determines $L_p(\mathcal{F},-)$ by its values on the $p$-adic characters $\psi_{p-\mathrm{fin}}$ coming from a Hecke character $\psi$ as in Theorem \ref{T.intro.1} with conductor $\mathfrak{f}|p^\infty$ (see Section \ref{constructing} for the definition of $\psi_{p-\mathrm{fin}}$), when $\mathcal{F}$ has small slope at every $\pprime|p$.

\begin{remark}
In Theorem \ref{T.intro.1} the level of $\mathcal{F}$ must be coprime with the conductor of $\psi$. On the other hand, for the $p$-adic setting of Theorem \ref{T.intro.2}, $p$ needs to be in the level and also its proof use Hecke characters $\psi$ with conductor $\mathfrak{f}|p^\infty$ then the level and the conductor are not coprime. As a consequence we are forced to work first with Bianchi newforms of prime-to-$p$ level, and then successively stabilise at each prime $\pprime|p$, consequently missing Bianchi newforms at $p$.
\end{remark}

The construction of the $p$-adic $L$-function in \cite{chris2017} and then the functional equation in Theorem \ref{T.intro.2} depend of the small slope condition of the Bianchi modular form $\mathcal{F}$. It is natural to ask for the $p$-adic $L$-function when $\mathcal{F}$ does not have small slope, i.e. the critical slope case. In \cite{salazar2018} such function was constructed for certain base-change Bianchi modular forms. We briefly describe the construction. 

Let $f\in S_{k+2}(\Gamma_0(N))$ be a finite slope eigenform, with $p|N$, new or $p$-stabilised of a newform, regular, non CM by $K$, decent and such that the base-change to $K$, denoted by $f_{/K}$, is $\Sigma$-smooth (see Conditions \ref{conditions} for more details) and let $V_\mathbb{Q}$ be a neighbourhood of $f$ such that the weight map $w$ is étale except possibly at $f$. Then, after shrinking $V_\mathbb{Q}$, in \cite{salazar2018} was constructed the \textit{three-variable $p$-adic $L$-function} 
\begin{equation*}
\mathcal{L}_p: V_{\mathbb{Q}}\times \mathfrak{X}(\mathrm{Cl}_K(p^{\infty})) \rightarrow L 
\end{equation*}
for sufficiently large $L\subset\overline{\mathbb{Q}}_p$, such that for any classical point $y \in V_\mathbb{Q}(L)$ with small slope base-change $f_{y/K}$ we have $\mathcal{L}_p(y,-)=c_yL_p(f_{y/K},-)$, where $c_y\in L^\times$ is a $p$-adic period at $y$ and $L_p(f_{y/K},-)$ is the $p$-adic $L$-function of $f_{y/K}$. Now, suppose that $f_{/K}$ has critical slope and is $\Sigma$-smooth; then the missing $p$-adic $L$-function of $f_{/K}$ is defined to be the specialisation $L_p({f_{/K}},-):=\mathcal{L}_p(x_f,-)$, where $x_f\in V_\mathbb{Q}$ is the point corresponding to $f$.

Our second result is the functional equation of $L_p(f_{/K},-)$, in  particular with no non-critical assumption on $f_{/K}$.

We can transfer the functional equation in Theorem \ref{T.intro.2} to $\mathcal{L}_p$ by shrinking $V_\mathbb{Q}$ such that there exist a Zariski-dense set $S\subset V_{\mathbb{Q}}$ of classical points $y$ satisfying that $f_{y/K}$ is a small slope successively stabilisation at every $\pprime|p$ of a Bianchi newform in $S_{(k_y,k_y)}(\Gamma_0(\mathfrak{n}))$ with $\mathfrak{n}=(\nu)$ being the prime-to-$p$ part of the level of $f_{/K}$ and $k_y\equiv k \pmod{p-1}$. Then we obtain:

\begin{theorem}
For every $y\in V_\mathbb{Q}$
and all $\kappa\in\mathfrak{X}(\mathrm{Cl}_K(p^{\infty}))$ we have
\begin{equation*}
\mathcal{L}_p(y,\kappa)=-\epsilon(\mathfrak{n})w_{\mathrm{Tm}}(\mathrm{N}(\mathfrak{n}))^{k/2}  \langle \mathrm{N}(\mathfrak{n}) \rangle^{k_y/2}\kappa(x_{-\nu,p})^{-1}\mathcal{L}_p(y,\kappa^{-1}w_{\mathrm{Tm}}^{k}\langle \cdot \rangle^{k_y}).
\label{T.intro.3}
\end{equation*}
\end{theorem}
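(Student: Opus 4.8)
The plan is to verify the functional equation first on the Zariski-dense set $S\subset V_{\mathbb{Q}}$ of classical points, where it reduces to the small-slope case already settled in Theorem~\ref{T.intro.2}, and then to propagate it to all of $V_{\mathbb{Q}}$ by rigid-analytic continuation. Fix $y\in S$. By the construction of the three-variable $p$-adic $L$-function in \cite{salazar2018}, the specialisation $\mathcal{L}_p(y,-)$ recovers the one-variable $p$-adic $L$-function $L_p(\mathcal{F}_{y,p},-)$ of the small-slope Bianchi $p$-stabilisation $\mathcal{F}_{y,p}$ of weight $(w(y),w(y))$ and tame level $\Gamma_0(\mathfrak{n})$, where $\mathcal{F}_{y,p}$ is the base change of $f_y$. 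Since $\mathcal{F}_{y,p}$ has small slope, Theorem~\ref{T.intro.2} applies with $k$ replaced by $w(y)$, giving
\[
\mathcal{L}_p(y,\kappa)=-\epsilon(\mathfrak{n})\,\mathrm{N}(\mathfrak{n})^{w(y)/2}\,\kappa(x_{-\nu,p})^{-1}\,\mathcal{L}_p(y,\kappa^{-1}\sigma_p^{w(y),w(y)})
\]
for all $\kappa\in\mathfrak{X}(\mathrm{Cl}_K(p^{\infty}))$. Here the Fricke sign $\epsilon(\mathfrak{n})$ and the level $\mathfrak{n}=(\nu)$ are independent of $y\in S$: they are determined by the components of $f_y$ away from $p$, which are constant along the Coleman family $V_{\mathbb{Q}}$.

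Next I would rewrite the $w(y)$-dependent data in the above identity in terms of a locally constant Teichm\"uller part and an analytically varying one-unit part, so as to match the shape of the asserted equation. Using the decomposition $a=w_{\mathrm{Tm}}(a)\langle a\rangle$ on $p$-adic units together with the hypothesis $w(y)\equiv k\pmod{p-1}$, the Teichm\"uller contribution in $\mathrm{N}(\mathfrak{n})^{w(y)/2}$ is pinned to its value at $k$, yielding $\mathrm{N}(\mathfrak{n})^{w(y)/2}=w_{\mathrm{Tm}}(\mathrm{N}(\mathfrak{n}))^{k/2}\langle\mathrm{N}(\mathfrak{n})\rangle^{w(y)/2}$; comparing with the definition of $\sigma_p^{k,k}$ in (\ref{e12.1}) gives the analogous identification $\sigma_p^{w(y),w(y)}=w_{\mathrm{Tm}}^{k}\langle\cdot\rangle^{w(y)}$ as characters of $\mathrm{Cl}_K(p^{\infty})$. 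Substituting these turns the displayed equation, for every $y\in S$, into exactly the claimed functional equation.

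It remains to remove the restriction $y\in S$. Fix $\kappa$ and let $G(y)$ denote the difference of the two sides of the asserted identity, viewed as a function of $y$ on $V_{\mathbb{Q}}$. The term $\mathcal{L}_p(y,\kappa)$ is rigid analytic in $y$ by construction; the scalar $\langle\mathrm{N}(\mathfrak{n})\rangle^{w(y)/2}=\exp\!\bigl(\tfrac{w(y)}{2}\log\langle\mathrm{N}(\mathfrak{n})\rangle\bigr)$ is analytic in $w(y)$; and the assignment $y\mapsto\kappa^{-1}w_{\mathrm{Tm}}^{k}\langle\cdot\rangle^{w(y)}$ is a rigid-analytic map $V_{\mathbb{Q}}\to\mathfrak{X}(\mathrm{Cl}_K(p^{\infty}))$, so that $y\mapsto\mathcal{L}_p(y,\kappa^{-1}w_{\mathrm{Tm}}^{k}\langle\cdot\rangle^{w(y)})$ is again rigid analytic. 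Hence $G$ is rigid analytic on $V_{\mathbb{Q}}$ and vanishes on the Zariski-dense set $S$; by the identity principle $G\equiv 0$, which gives the theorem for all $y\in V_{\mathbb{Q}}$.

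The main obstacle I expect is precisely this last analyticity input: one must check that feeding the $y$-varying character $\langle\cdot\rangle^{w(y)}$ into the second argument of $\mathcal{L}_p$ produces a genuine rigid-analytic function of $y$, rather than a merely pointwise-defined one, since it is this property that licenses the passage from the dense set $S$ to all of $V_{\mathbb{Q}}$. A secondary, more bookkeeping-type difficulty is the Teichm\"uller step of the second paragraph, where the congruence $w(y)\equiv k\pmod{p-1}$ must be used carefully to ensure the locally constant factors are genuinely constant on $S$ and to track the half-integer exponents.
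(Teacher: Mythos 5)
Your proposal is correct and follows essentially the same route as the paper: specialise at the Zariski-dense set $S$ of classical points where Theorem \ref{T.intro.2} applies at weight $w(y)$, rewrite $\mathrm{N}(\mathfrak{n})^{w(y)/2}$ and $\sigma_p^{w(y),w(y)}$ via the Teichm\"uller decomposition using $w(y)\equiv k\pmod{p-1}$, and conclude by Zariski density. Your treatment is somewhat more explicit than the paper's on the final continuation step (checking that both sides, including $y\mapsto\mathcal{L}_p(y,\kappa^{-1}w_{\mathrm{Tm}}^{k}\langle\cdot\rangle^{w(y)})$, are rigid analytic in $y$), which the paper leaves implicit.
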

Where $\omega_{\mathrm{Tm}}=\prod_{\pprime|p}\omega_{\mathrm{Tm},\pprime}$ with $\omega_{\mathrm{Tm},\pprime}$ denoting the Teichmüller character at $\pprime$ and $\langle x \rangle:=\omega_{\mathrm{Tm}}(x)^{-1}x$ for $x\in(\tensorspace)^\times$.

Finally, since the $p$-adic $L$-function of a critical slope $\Sigma$-smooth base-change $f_{/K}$ is defined to be the specialisation $L_p({f_{/K}},-):=\mathcal{L}_p(x_f,-)$, then from specialising in Theorem \ref{T.intro.3} we obtain the following:
\begin{corollary}
Let $\mathcal{F}$ be a $\Sigma$-smooth base-change to $K$ of a modular form satisfying Conditions \ref{conditions}, let $\mathfrak{n}=(\nu)$ be the prime-to-$p$ part of the level of $\mathcal{F}$, then for all $\kappa\in\mathfrak{X}(\mathrm{Cl}_K(p^{\infty}))$ the distribution $L_p(\mathcal{F},-)$ satisfies the following functional equation
\begin{equation*}
L_p(\mathcal{F},\kappa)=-\epsilon(\mathfrak{n})\mathrm{N}(\mathfrak{n})^{k/2}\kappa(x_{-\nu,p})^{-1}L_p(\mathcal{F},\kappa^{-1}\sigma_p^{k,k}).
\end{equation*}
\end{corollary}

Notice that corollary above not only gives the functional equation for $\Sigma$-smooth critical slope base-change Bianchi modular forms, but also for $\Sigma$-smooth small slope base-change Bianchi newforms at $p$, this case is interesting, considering for example, that when $p$ is split in $K$, small slope is automatic.

\section*{Acknowledgments}

I would like to thank my PhD supervisor Daniel Barrera for suggesting this topic to me, as well as for the many conversations we’ve had on the subject. Thanks also to Chris Williams for helpful conversations about Bianchi modular forms. Finally, I like to thank the referee for their valuable comments and corrections. This work was funded by the National Agency for Research and Development (ANID, Chile)/Scholarship Program/BECA DOCTORADO NACIONAL/2018 - 21180506.

\section{Bianchi modular forms}

\subsection{Notation} \hfill\\

Throughout this paper, we fix $p$ a rational prime and take $K$ to be an imaginary quadratic field with class number 1 and ring of integers $\mathcal{O}_K$, let $\delta=\sqrt{-D}$ (where $-D$ is the discriminant of $K$) be a generator of the different ideal $\mathcal{D}$ of $K$, $\mathfrak{n}=(\nu)$ an ideal of $\mathcal{O}_K$. At each prime $\mathfrak{q}$ of $K$, denote by $K_{\mathfrak{q}}$ the completion of $K$ with respect to $\mathfrak{q}$, $\mathcal{O}_{\mathfrak{q}}$ the ring of integers of $K_{\mathfrak{q}}$ and fix a uniformiser $\pi_\mathfrak{q}$ at $\mathfrak{q}$. Denote the adele ring of $K$ by $\mathbb{A}_K=K_{\infty}\times\mathbb{A}_K^f$ where $K_{\infty}$ are the infinite adeles and $\mathbb{A}_K^f$ are the finite adeles. Furthermore, define $\widehat{\mathcal{O}_K}:=\mathcal{O}_K\otimes_{\mathbb{Z}}\widehat{\mathbb{Z}}$ to be the finite integral adeles. Let $n\geqslant0$ be an integer and denote by $V_{n}(R)$ the space of homogeneous polynomials in two variables of degree $n$ over a ring $R$. Note that $V_{n}(\mathbb{C})$ is an irreducible complex right representation of $\mathrm{SU}_2(\mathbb{C})$, denote it by $\rho_{n}$.

For a general Hecke character $\psi$ of $K$, for each prime $\mathfrak{q}$ of $K$ we denote by $\psi_{\mathfrak{q}}$ the restriction of $\psi$ to $K_{\mathfrak{q}}^\times$ and for an ideal $I\subset\mathcal{O}_K$, we define $\psi_I=\prod_{\mathfrak{q}|I}\psi_\mathfrak{q}$; we also write $\psi_\infty$ for the restriction of $\psi$ to the infinite ideles, and $\psi_f$ for the restriction to the finite ideles.

\subsection{Background}\hfill\\

Let $\Omega_0(\mathfrak{n})= \left\{ \smallmatrixx{a}{b}{c}{d} \in \mathrm{GL_2}(\widehat{\mathcal{O}_K}): c \in \mathfrak{n}\widehat{\mathcal{O}_K} \right\}$ and let $\varphi$ be a Hecke character whose conductor divides $\mathfrak{n}$ and with infinity type $(-k-2v_1,-k-2v_2)$ for $k\geq0$, $v_1$, $v_2$ integers. For $u_f=\smallmatrixx{a}{b}{c}{d}\in\Omega_0(\mathfrak{n})$ we set $\varphi_{\mathfrak{n}}(u_f)=\varphi_{\mathfrak{n}}(d)=\prod_{\mathfrak{q}|\mathfrak{n}}\varphi_{\mathfrak{q}}(d_{\mathfrak{q}})$.

\begin{definition}
We say a function $\Phi:\mathrm{GL_2}(\mathbb{A}_K) \rightarrow V_{2k+2}(\mathbb{C})$ is a \textit{cuspidal automorphic form over $K$} of weight $\lambda=[(k,k),(v_1,v_2)]$, level $\Omega_0(\mathfrak{n})$ and central action $\varphi$ if it satisfies:
\begin{enumerate}
\item[(i)] $\Phi$ is left-invariant under $\mathrm{GL_2}(K)$; 
\item[(ii)] $\Phi(zg)=\varphi(z)\Phi(g)$ for $z\in \mathbb{A}_K^\times \cong Z(\mathrm{GL_2}(\mathbb{A}_K))$, where $Z(G)$ denote the centre of the group $G$; 
\item[(iii)] $\Phi(gu)=\varphi_\mathfrak{n}(u_f)\Phi(g)\rho_{2k+2}(u_\infty)$ for $u=u_f\cdot u_\infty \in \Omega_0(\mathfrak{n})\times\mathrm{SU_2}(\mathbb{C})$;
\item[(iv)] $\Phi$ is an eigenfunction of the operator $\partial$, where $\partial /4$ denotes a component of the Casimir operator in the Lie algebra $\mathfrak{sl}_2(\mathbb{C})\otimes\mathbb{C}$, and where we consider $\Phi(g_{\infty}g_f)$ as a function of $g_\infty \in \mathrm{GL_2}(\mathbb{C}).$
\item[(v)] $\Phi$ satisfies the cuspidal condition that for all $g \in \mathrm{GL_2}(\mathbb{A}_K)$, we have 
\begin{equation*}
\int_{K\backslash\mathbb{A}_K}\Phi\left(\smallmatrixx{1}{u}{0}{1}g\right)du=0
\end{equation*} 
where $du$ is the Lebesgue measure on $\mathbb{A}_K$.
\end{enumerate}
The space of such functions will be denoted by $S_{\lambda}(\Omega_0(\mathfrak{n}),\varphi)$.
\label{T1.4}
\end{definition}

\begin{remark}
It is possible to define automorphic forms over $K$ of weight $\lambda=[(k_1,k_2),(v_1,v_2)]$, for distinct integers $k_1$ and $k_2$ but since we are dealing with cusp forms we assume $k_1=k_2=k$ (see \cite[\S2.5, Cor 2.2]{hida1994critical}).
\end{remark}

A cuspidal automorphic form $\Phi$ of weight $\lambda$ and level $\Omega_0(\mathfrak{n})$ descends to give a function $F:\mathrm{GL_2}(\mathbb{C})\rightarrow V_{2k+2}(\mathbb{C})$, via $F(g):=\Phi(g)$ for $g\in\mathrm{GL}_2(\mathbb{C})\subset\mathrm{GL_2}(\mathbb{A}_K)$. 

Let $\mathcal{H}_3:=\{(z,t): z\in\mathbb{C}, t\in\mathbb{R}_{>0}\}$ be the hyperbolic space and since $\mathrm {GL_2}(\mathbb{C})=Z(\mathrm{GL_2}(\mathbb{C}))\cdot \mathrm{B} \cdot \mathrm{SU_2}(\mathbb{C})$, where $\mathrm{B}=\left\{ \smallmatrixx{t}{z}{0}{1}: z\in\mathbb{C}, t\in\mathbb{R}_{>0} \right\}\cong \mathcal{H}_3$, we can descend further using ii) and iii) in Definition \ref{T1.4} to obtain a function
\begin{align}
\mathcal{F}&: \mathcal{H}_3\longrightarrow V_{2k+2}(\mathbb{C}), \nonumber\\
& (z,t) \longmapsto t^{v_1+v_2-1}F\smallmatrixx{t}{z}{0}{1}.
\label{e.3.1}
\end{align}

\begin{remark}
There are two ways to define the function $\mathcal{F}$ in the literature.
\begin{enumerate}
\item In accounts such as \cite{cremona1981}, \cite{cremona1994} and \cite{bygott1998modular}, all of which deal predominantly with weight $[(0,0),(0,0)]$ and trivial central action; such function is defined simply restricting to B.
\item In \cite{ghate1999critical} and \cite{chris2017} such $\mathcal{F}$ is defined as in (\ref{e.3.1}) for weight $[(k,k),(0,0)]$.
\end{enumerate}
For the two definitions above, the modularity condition satisfied by such $\mathcal{F}$ resulting is different, but there is a clear bijection between the sets of functions that arise.
\end{remark}

\begin{definition}
A function $\mathcal{F}:\mathcal{H}_3\rightarrow V_{2k+2}(\mathbb{C})$ is a \textit{cuspidal Bianchi modular form} if comes from $\Phi$, a cuspidal automorphic form over $K$ (in the sense of definition \ref{T1.4}) by the descent described above.
\end{definition}

Note that for $\gamma=\smallmatrixx{a}{b}{c}{d}\in\Gamma_0(\mathfrak{n}):=\mathrm{SL_2}(K)\cap\Omega_0(\mathfrak{n})\mathrm{GL}_2(\mathbb{C})$, a Bianchi modular form $\mathcal{F}$ satisfies the following automorphic condition:
\begin{equation}
\mathcal{F}(\gamma\cdot(z,t))=\varphi_\mathfrak{n}(d)^{-1}\mathcal{F}(z,t)\rho_{2k+2}(J(\gamma;(z,t))),
\label{e2.4}
\end{equation}
where $\gamma\cdot(z,t)$ denotes the action of $\rm{GL}_2(\mathbb{C})$ on $\mathcal{H}_3$ and $J(\gamma;(z,t)):=\smallmatrixx{cz+d}{\overline{c}t}{-ct}{\overline{cz+d}}$.

We denote by $S_{\lambda}(\Gamma_0(\mathfrak{n}),\varphi_\mathfrak{n}^{-1})$ the space of cuspidal Bianchi modular forms $\mathcal{F}$ that come from cuspidal automorphic forms $\Phi\in S_{\lambda}(\Omega_0(\mathfrak{n}),\varphi)$ and then they satisfy (\ref{e2.4}). Also denote by $S_{\lambda}(\Gamma_0(\mathfrak{n}))$ when $\varphi$ has trivial conductor.

\begin{definition}
Let $\gamma\in \mathrm{GL_2}(\mathbb{C})$ and let $\mathcal{F}\in S_{\lambda}(\Gamma_0(\mathfrak{n}),\varphi_\mathfrak{n}^{-1})$, then define a new function $\mathcal{F}|_\gamma$ by
\begin{equation}
(\mathcal{F}|_\gamma)(z,t):=det(\gamma)^{-k/2-v_1}\overline{det(\gamma)}^{-k/2-v_2}\mathcal{F}(\gamma \cdot(z,t))  \rho_{2k+2}^{-1}\left(J\left(\frac{\gamma}{\sqrt{det(\gamma)}} ;(z,t)\right)\right).
\label{e2.5}
\end{equation}
\end{definition}

\begin{remark}
Note that $\mathcal{F}\in S_{\lambda}(\Gamma_0(\mathfrak{n}),\varphi_\mathfrak{n}^{-1})$ satisfies:

1) $\mathcal{F}|_g(0,1)=F(g)$ for $g\in\mathrm{GL}_2(\mathbb{C})$, and for $g=\smallmatrixx{t}{z}{0}{1}\in\mathrm{B}$ we obtain (\ref{e.3.1}).

2) $\left(\mathcal{F}|_\gamma\right)(z,t)=\varphi_\mathfrak{n}(d)^{-1}\mathcal{F}(z,t)$ for $\gamma=\smallmatrixx{a}{b}{c}{d} \in \Gamma_0(\mathfrak{n})$.
\label{rem2.6}
\end{remark}

\subsection{Fourier-Whittaker expansions}\hfill\\

Let $\Phi: \mathrm{GL}_2(\mathbb{A}_K) \rightarrow V_{2k+2}(\mathbb{C})$ be a cuspidal automorphic form of weight $\lambda=[(k,k),(v_1,v_2)]$. Then $\Phi$ has the following Fourier expansion (see \cite[Thm 6.1]{hida1994critical})
\begin{equation}
\Phi\left[\smallmatrixx{t}{z}{0}{1}\right]=|t|_{\mathbb{A}_K} \sum_{\alpha \in K^\times} c(\alpha t \delta,\Phi)W(\alpha t_\infty) e_K(\alpha z) \;\;\text{for}\;\; \smallmatrixx{t}{z}{0}{1} \in\mathrm{GL}_2(\mathbb{A}_K),
\label{e.2.1}
\end{equation}
where:
\begin{itemize}
\item[i)] the Fourier coefficient $c(\cdot,\Phi)$ is a function on the fractional ideals of $K$, with $c(I,\Phi)=0$ for $I$ non-integral;
\item[ii)] $e_K$ is an additive character of $K\backslash \mathbb{A}_K$ defined by
\begin{equation*}
e_K=\left( \prod_{\pprime \;\mathrm{prime}} (e_p \circ \mathrm{Tr}_{K_\pprime/\mathbb{Q}_p})\right)\cdot (e_\infty \circ \mathrm{Tr}_{\mathbb{C}/\mathbb{R}}),  
\end{equation*}
for 
\begin{equation*}
e_p\left( \sum_j d_jp^j \right)=e^{-2\pi i \sum_{j<0} d_jp^j} \;\;\;  \mathrm{and} \;\;\; e_\infty(r)=e^{2\pi ir};    
\end{equation*}
\item[iii)] $W:\mathbb{C}^\times \rightarrow V_{2k+2}(\mathbb{C})$ is the Whittaker function
\begin{equation*}
W(s):= \sum_{n=0}^{2k+2} \matrixxx{2k+2}{n}\frac{1}{s^{v_1}\overline{s}^{v_2}} \left( \frac{s}{i|s|} \right)^{k+1-n} K_{n-(k+1)}(4 \pi |s|)X^{2k+2-n}Y^n,    
\end{equation*}
where $K_n(x)$ is a modified Bessel function.
\end{itemize}

If our cuspidal automorphic form, $\Phi$, corresponds to a cuspidal Bianchi modular form $\mathcal{F}$ on $\mathcal{H}_3$, then the Fourier expansion stated above descends to the following Fourier expansion of $\mathcal{F}$ (see \cite{ghate1999critical}):

\begin{equation*}
\mathcal{F}(z,t)\matrixxx{X}{Y}= \sum_{n=0}^{2k+2} \mathcal{F}_n(z,t) X^{2k+2-n}Y^{n},   
\end{equation*}

\begin{equation}
\mathcal{F}_n(z,t):=t\matrixxx{2k+2}{n} \sum_{\alpha\in K^{\times}} \left[ c(\alpha\delta)\frac{1}{\alpha^{v_1}\overline{\alpha}^{v_2}}\left(\frac{\alpha}{i|\alpha|}\right)^{k+1-n} K_{n-k-1}(4\pi|\alpha|t)e^{2\pi i (\alpha z + \overline{\alpha z})}\right].
\label{e1.6}
\end{equation}
Here to ease notation we have written $c(\alpha\delta)$ instead $c(\alpha\delta,\Phi)$.

\subsection{Hecke operators}\label{section2.4}\hfill\\

As with classical modular forms, we can extend the action of $\mathrm{GL}_2(\mathbb{C})$ on functions, given by (\ref{e2.5}), to the group ring of $\mathrm{GL}_2(\mathbb{C})$; Hecke operators will be defined by particular elements on this group ring.

Let $\mathfrak{q}\nmid\mathfrak{n}$ be a prime ideal of $\mathcal{O}_K$ generated by the fixed uniformiser $\pi_\mathfrak{q}$. Let $\mathcal{F}\in S_{\lambda}(\Gamma_0(\mathfrak{n}),\varphi_\mathfrak{n}^{-1})$ be a Bianchi modular form of weight $\lambda=[(k,k),(v_1,v_2)]$ we define the Hecke operator
\begin{equation*}
\mathcal{F}\mapsto \mathcal{F}|_{T_{\mathfrak{q}}}:=\pi_{\mathfrak{q}}^{k+2v_1}\overline{\pi_{\mathfrak{q}}}^{k+2v_2}\left[\sum_{b\in(\mathcal{O}_K/\mathfrak{q})^\times} \mathcal{F}|_{\bonitaa{1}{b}{0}{\pi_{\mathfrak{q}}}} + \varphi_{\mathfrak{n}}(\pi_{\mathfrak{q}})^{-1}\mathcal{F}|_{\bonitaa{\pi_{\mathfrak{q}}}{0}{0}{1}}\right].
\end{equation*}
When $\mathfrak{q}|\mathfrak{n}$ we denote $T_\mathfrak{q}$ by $U_\mathfrak{q}$ and 
\begin{equation*}
\mathcal{F}|_{U_{\mathfrak{q}}}:=\pi_{\mathfrak{q}}^{k+2v_1}\overline{\pi_{\mathfrak{q}}}^{k+2v_2}\sum_{b\in(\mathcal{O}_K/\mathfrak{q})^\times} \mathcal{F}|_{\bonitaa{1}{b}{0}{\pi_{\mathfrak{q}}}}.
\end{equation*}

We can similarly define Hecke operators for each ideal $I$ of $K$. Indeed, let $I=\prod_{\mathfrak{q}}\mathfrak{q}^r$ where $\mathfrak{q}^r$ exactly divides $I$, then the Hecke operator $T_I$ is totally determined by the Hecke operators $T_\mathfrak{q}$ for $\mathfrak{q}|I$.

\begin{remark}
The Hecke operators on Bianchi modular forms defined above are the descent of the Hecke operators defined in \cite[Chap.VI]{weil1971dirichlet}) on automorphic forms over $K$ by the action of double cosets.
\end{remark}

In the same way as in the rational case (elliptic modular forms), for Bianchi modular forms of weight $\lambda=[(k,k),(v_1,v_2)]$ and level $\Gamma_0(\mathfrak{n})$ with $\mathfrak{n}=(\nu)$ there is a Fricke involution $W_{\mathfrak{n}}$ defined by
\begin{equation*}
\mathcal{F}|_{W_{\mathfrak{n}}}:=\nu^{k/2+v_1}\overline{\nu}^{k/2+v_2}\mathcal{F}|_{\bonitaa
{0}{-1}{\nu}{0}}.
\end{equation*}

\begin{lemma}
Let $\mathcal{F}\in S_{\lambda}(\Gamma_0(\mathfrak{n}),\varphi_\mathfrak{n}^{-1})$ with $\lambda=[(k,k),(v_1,v_2)]$,  
then for $0\leqslant n \leqslant 2k+2$ we have
\begin{equation*}
(\mathcal{F}|_{W_{\mathfrak{n}}})_{2k+2-n}(0,t)=t^{-2k-2}(-1)^n\nu^{n-k-1}|\nu|^{-n}\mathcal{F}_{n}\left( 0,\frac{1}{|\nu|t} \right).
\end{equation*}
\label{T1.19}
\end{lemma}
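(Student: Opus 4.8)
The plan is to unwind the definition of the Fricke involution and reduce everything to an explicit evaluation of the slash operator $|_\gamma$ for $\gamma=\smallmatrixx{0}{-1}{\nu}{0}$ at the point $(0,t)\in\mathcal{H}_3$. First I would write $\mathcal{F}|_{W_\mathfrak{n}}=|\nu|^k\,\mathcal{F}|_\gamma$ and observe that, since $\det\gamma=\nu$, the prefactor $|\det\gamma|^{-k}=|\nu|^{-k}$ appearing in (\ref{e2.5}) cancels the $|\nu|^k$ coming from the definition of $W_\mathfrak{n}$, so that $(\mathcal{F}|_{W_\mathfrak{n}})(0,t)=\mathcal{F}(\gamma\cdot(0,t))\,\rho_{2k+2}^{-1}(J)$ with $J=J(\gamma/\sqrt\nu;(0,t))$. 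The next step is to identify $\gamma\cdot(0,t)$. Since the action of $\mathrm{GL}_2(\mathbb{C})$ on $\mathcal{H}_3$ factors through $\mathrm{PGL}_2(\mathbb{C})$, I may replace $\gamma$ by $\tilde\gamma=\gamma/\sqrt\nu$ and compute, either from the quaternionic formula $(aP+b)(cP+d)^{-1}$ with $P=tj$, or from the standard explicit M\"obius formula, obtaining $\gamma\cdot(0,t)=(0,\tfrac{1}{|\nu|t})$; this already produces the argument $\mathcal{F}_n(0,\tfrac{1}{|\nu|t})$ appearing on the right-hand side.

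Next I would compute the automorphy matrix. Evaluating $J(\tilde\gamma;(z,t))=\smallmatrixx{cz+d}{\bar c t}{-ct}{\overline{cz+d}}$ at $z=0$, $c=\sqrt\nu$, $d=0$ forces $cz+d=0$, giving the anti-diagonal matrix $J=\smallmatrixx{0}{\overline{\sqrt\nu}\,t}{-\sqrt\nu\,t}{0}$ of determinant $|\nu|t^2$, whence $J^{-1}=\smallmatrixx{0}{-1/(\sqrt\nu\,t)}{1/(\overline{\sqrt\nu}\,t)}{0}$. Using the explicit realisation of $\rho_{2k+2}$ as the symmetric-power right action on $V_{2k+2}(\mathbb{C})$, under which $\rho_{2k+2}(g)$ acts on a polynomial $P(X,Y)$ by the linear substitution $\binom{X}{Y}\mapsto g\binom{X}{Y}$ (so that $\rho_{2k+2}^{-1}(J)=\rho_{2k+2}(J^{-1})$ substitutes $\binom{X}{Y}\mapsto J^{-1}\binom{X}{Y}$), I would apply $\rho_{2k+2}^{-1}(J)$ to the Fourier polynomial $\mathcal{F}(0,\tfrac{1}{|\nu|t})=\sum_n \mathcal{F}_n(0,\tfrac{1}{|\nu|t})X^{2k+2-n}Y^n$. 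The anti-diagonal shape of $J^{-1}$ sends $X^{2k+2-n}Y^n$ to a scalar multiple of $X^nY^{2k+2-n}$, so the coefficient of $X^{n}Y^{2k+2-n}$ --- that is, $(\mathcal{F}|_{W_\mathfrak{n}})_{2k+2-n}(0,t)$ --- receives a contribution only from the $n$-th Fourier term, and reading it off gives $t^{-2k-2}(-1)^n(\sqrt\nu)^{-(2k+2-n)}(\overline{\sqrt\nu})^{-n}\mathcal{F}_n(0,\tfrac{1}{|\nu|t})$.

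It then remains to simplify the scalar, and here one must be slightly careful about branches: writing $\overline{\sqrt\nu}=|\nu|/\sqrt\nu$ (the conjugate of the chosen square root), the factor becomes $(\sqrt\nu)^{-(2k+2-n)}(\sqrt\nu)^{n}|\nu|^{-n}=(\sqrt\nu)^{2(n-k-1)}|\nu|^{-n}=\nu^{\,n-k-1}|\nu|^{-n}$, which is independent of the choice of square root and matches the claim exactly; the $t$-powers collect as $t^{-(2k+2-n)}t^{-n}=t^{-2k-2}$, and $(-1)^{2k+2-n}=(-1)^n$ since $2k+2$ is even. The one genuine subtlety, and the step I would treat most carefully, is fixing the precise convention for $\rho_{2k+2}$ (column- versus row-vector substitution): it is the column-vector convention that reproduces $\nu^{\,n-k-1}$ rather than $\bar\nu^{\,n-k-1}$, so matching the stated formula both confirms the convention and pins down all the signs.
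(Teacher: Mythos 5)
Your proof is correct and follows essentially the same route as the paper's: compute $\gamma\cdot(0,t)=(0,1/(|\nu|t))$, evaluate $J(\gamma/\sqrt{\det\gamma};(0,t))$ and its inverse, apply $\rho_{2k+2}^{-1}(J)$ as the substitution $\binom{X}{Y}\mapsto J^{-1}\binom{X}{Y}$ on the monomials, and read off coefficients. In fact you carry out two steps the paper leaves implicit --- the cancellation of $|\nu|^k$ against $|\det\gamma|^{-k}$ and the explicit simplification $(\sqrt\nu)^{-(2k+2-n)}(\overline{\sqrt\nu})^{-n}=\nu^{\,n-k-1}|\nu|^{-n}$ --- and your computed $J^{-1}$ confirms that the matrix displayed mid-proof in the paper contains a sign typo in the exponent ($\nu^{1/2}$ should be $\nu^{-1/2}$), as its own final line shows.
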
{}

\begin{proof}
Note that if $\gamma=\smallmatrixx{0}{-1}{\nu}{0}$ then $\gamma\cdot(0,t)=(0,1/(|\nu|t))$ and

\begin{align*}
\rho_{2k+2}^{-1}\left(J\left(\frac{\gamma}{\sqrt{det(\gamma)}} ;(0,t)\right)\right)\matrixxx{X}{Y}^{2k+2}&=
\left(J\left(\frac{\gamma}{\sqrt{det(\gamma)}} ;(0,t)\right)^{-1}\matrixxx{X}{Y}\right)^{2k+2}\\
&=\left(\matrixx{0}{-\nu^{-1/2}t^{-1}}{\overline{\nu}^{-1/2}t^{-1}}{0}\matrixxx{X}{Y}\right)^{2k+2}\\
&=\matrixxx{-\nu^{-1/2}t^{-1}Y}{\overline{\nu}^{-1/2}t^{-1}X}^{2k+2}
\end{align*}
where $\matrixxx{X}{Y}^{2k+2}=(X^{2k+2}, X^{2k+1}Y,...,X^{2k+2-n}Y^n,..., XY^{2k+1}, Y^{2k+2})^t$.\\

Then,
\begin{align*}
(\mathcal{F}|_{W_{\mathfrak{n}}})(0,t)\matrixxx{X}{Y}^{2k+2}&=\nu^{k/2+v_1}\overline{\nu}^{k/2+v_2}\mathcal{F}|_{\bonitaa{0}{-1}{\nu}{0}}(0,t)\matrixxx{X}{Y}^{2k+2}\\
&=\mathcal{F}(0,1/(|\nu|t))\cdot \matrixxx{-\nu^{-1/2}t^{-1}Y}{\overline{\nu}^{-1/2}t^{-1}X}^{2k+2}.
\end{align*}
\end{proof}

\subsection{Twisted series of Bianchi modular forms}\label{sectiontwist}\hfill\\

For the rest of the paper we fix the weight $\lambda=[(k,k),(0,0)]$, i.e. $v_1=v_2=0$ as in \cite{chris2017}, then we say $\Phi$ has weight $(k,k)$ and denote $S_{\lambda}(\Omega_0(\mathfrak{n}),\varphi)$ by $S_{(k,k)}(\Omega_0(\mathfrak{n}),\varphi)$ and $S_{\lambda}(\Gamma_0(\mathfrak{n}),\varphi_\mathfrak{n}^{-1})$ by $S_{(k,k)}(\Gamma_0(\mathfrak{n}),\varphi_\mathfrak{n}^{-1})$. The importance of the general definition with $(v_1,v_2)$ in previous sections will become clear in this section when we twist Bianchi modular forms by Hecke characters.

\begin{definition}
Let $\Phi\in S_{(k,k)}(\Omega_0(\mathfrak{n}),\varphi)$ and $\psi$ be a Hecke character of conductor $\mathfrak{f}$. Define the twisting operator $R(\psi)$ by
\begin{equation*}
\Phi|R(\psi)(g):=\psi(det(g))\sum_{[a]\in(\mathfrak{f}^{-1}/\mathcal{O}_K)^\times}\psi_{\mathfrak{f}}(a)\Phi(g\smallmatrixx{1}{a}{0}{1}),\;\;\text{for}\;\;g\in\mathrm{GL}_2(\mathbb{A}_K).    
\end{equation*}
\label{def2.9}
\end{definition}

\begin{proposition}
Let $\Phi\in S_{(k,k)}(\Omega_0(\mathfrak{n}),\varphi)$ where $\varphi$ has infinity type $(-k,-k)$ and conductor dividing $\mathfrak{n}$ and let $\psi$ be a Hecke character of infinity type $(q,r)$ and conductor $\mathfrak{f}$. Then $\Phi|R(\psi)\in S_{\iota}(\Omega_0(\mathfrak{m}),\varphi\psi^2)$ where $\iota=[(k,k),(-q,-r)]$ and $\mathfrak{m}=\mathfrak{n}\cap\mathfrak{f}^2$.
\label{Prop2.10}
\end{proposition}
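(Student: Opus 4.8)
The plan is to show that $\Psi := \Phi|R(\psi)$ satisfies the five defining properties of a cuspidal automorphic form (Definition \ref{T1.4}) for the level $\Omega_0(\mathfrak{m})$ with $\mathfrak{m}=\mathfrak{n}\cap\mathfrak{f}^2$ and central action $\varphi\psi^2$. Since $R(\psi)$ is built from the scalar $\psi(\det(\cdot))$ and a finite sum of right translations by the finite-adelic unipotents $\smallmatrixx{1}{a}{0}{1}$, most conditions transfer almost formally from $\Phi$; the only substantial point is the equivariance (iii) under $\Omega_0(\mathfrak{m})$, and the role of the choice $\mathfrak{m}=\mathfrak{n}\cap\mathfrak{f}^2$ is precisely to make this work.

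I would dispatch the cheap conditions first. For (i): if $\eta\in\mathrm{GL_2}(K)$ then $\psi(\det(\eta g))=\psi(\det\eta)\psi(\det g)=\psi(\det g)$ because a Hecke character is trivial on $K^\times$, while $\Phi(\eta g\smallmatrixx{1}{a}{0}{1})=\Phi(g\smallmatrixx{1}{a}{0}{1})$ by left $\mathrm{GL_2}(K)$-invariance of $\Phi$; hence $\Psi(\eta g)=\Psi(g)$. For (ii): a central $z\in\mathbb{A}_K^\times$ has $\det(zI)=z^2$, so $\psi(\det(zg))=\psi^2(z)\psi(\det g)$, whereas $\Phi(zg\smallmatrixx{1}{a}{0}{1})=\varphi(z)\Phi(g\smallmatrixx{1}{a}{0}{1})$; combining gives $\Psi(zg)=(\varphi\psi^2)(z)\Psi(g)$, the claimed central action. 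For (iv): a finite-order $\psi$ has trivial infinity type, so $\psi(\det g)$ is independent of $g_\infty$, and right translation by the finite matrices $\smallmatrixx{1}{a}{0}{1}$ commutes with the Casimir operator $\partial$ (a differential operator in the archimedean variable); thus $\Psi$ is a $\partial$-eigenfunction with the eigenvalue of $\Phi$. For (v): since $\det\smallmatrixx{1}{u}{0}{1}=1$ and each translate $g\mapsto\Phi(g\smallmatrixx{1}{a}{0}{1})$ again satisfies the cuspidal vanishing, integrating term by term against $du$ yields $0$.

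The heart of the proof is (iii). Writing $u=u_fu_\infty\in\Omega_0(\mathfrak{m})\times\mathrm{SU_2}(\mathbb{C})$, the archimedean factor is immediate: $u_\infty$ commutes with the finite-adelic $\smallmatrixx{1}{a}{0}{1}$ and has $\det u_\infty=1$, so property (iii) for $\Phi$ extracts $\rho_{2k+2}(u_\infty)$ from every summand. For the finite factor $u_f=\smallmatrixx{\alpha}{\beta}{\gamma}{\delta}\in\Omega_0(\mathfrak{m})$, I would perform a change of variables in the sum: for each residue $a$ produce $a'\in(\mathfrak{f}^{-1}/\mathcal{O}_K)^\times$ and $v=v(a)\in\Omega_0(\mathfrak{n})$ with
\begin{equation*}
u_f\smallmatrixx{1}{a}{0}{1}=\smallmatrixx{1}{a'}{0}{1}\,v .
\end{equation*}
This forces $v=\smallmatrixx{1}{-a'}{0}{1}u_f\smallmatrixx{1}{a}{0}{1}$, whose lower-left entry is $\gamma\in\mathfrak{m}\widehat{\mathcal{O}_K}\subseteq\mathfrak{n}\widehat{\mathcal{O}_K}$ and lower-right entry $\gamma a+\delta$. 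The inclusion $\mathfrak{f}^2\mid\mathfrak{m}$ guarantees $\gamma\in\mathfrak{f}^2\widehat{\mathcal{O}_K}$, which clears all the $\mathfrak{f}^{-1}$-denominators coming from $a,a'$, so that choosing $a'\equiv\alpha\delta^{-1}a$ at the primes dividing $\mathfrak{f}$ — a bijection of $(\mathfrak{f}^{-1}/\mathcal{O}_K)^\times$, as $\delta$ is a unit there — makes every entry of $v$ integral and places $v\in\Omega_0(\mathfrak{n})$. Applying property (iii) for $\Phi$ in the form $\Phi(g\smallmatrixx{1}{a'}{0}{1}v)=\varphi_\mathfrak{n}(v)\Phi(g\smallmatrixx{1}{a'}{0}{1})$ and re-indexing the sum by $a'$, I would then collect the three character contributions, namely $\psi(\det u_f)$, the distortion of $\psi_\mathfrak{f}(a)$ under $a=\delta\alpha^{-1}a'$, and $\varphi_\mathfrak{n}(v)=\varphi_\mathfrak{c}(\gamma a+\delta)$, and check that they assemble into $(\varphi\psi^2)_\mathfrak{m}(u_f)$.

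I expect this last bookkeeping to be the main obstacle. It requires working prime by prime: one must show $\psi(\det u_f)=\psi_\mathfrak{f}(\alpha\delta)$ (using $\gamma\in\mathfrak{f}^2$ to discard $\beta\gamma$ modulo the conductor of $\psi$), that the reparametrization contributes the factor $\psi_\mathfrak{f}(\delta\alpha^{-1})$, and that $\varphi_\mathfrak{c}(\gamma a+\delta)$ collapses to $\varphi_\mathfrak{c}(\delta)$; together these give the $a'$-independent scalar $\psi_\mathfrak{f}(\delta)^2\varphi_\mathfrak{c}(\delta)=(\varphi\psi^2)_\mathfrak{m}(\delta)$, the value read off the lower-right entry, as required. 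Verifying that the denominators genuinely cancel at every prime — and in particular that the single choice $\mathfrak{m}=\mathfrak{n}\cap\mathfrak{f}^2$ simultaneously secures integrality of $v$, well-definedness of the reparametrization, and the correct conductor for $\varphi\psi^2$ — is the delicate step; by contrast, conditions (i), (ii), (iv) and (v) are essentially formal.
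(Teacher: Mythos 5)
Your route is genuinely different from the paper's: the paper does not verify Definition \ref{T1.4} directly at all, but simply cites Section 6 of \cite{hida1994critical} for the statement that $\Phi|R(\psi)$ is automorphic of level $\Omega_0(\mathfrak{m})$ and central action $\varphi\psi^2$, adding only that the weight stays $(k,k)$ because $\psi_\infty$ is trivial, and that the cuspidal condition (v) must be checked. Your direct verification of (i), (ii), (iv), (v) is correct, and your decomposition $u_f\smallmatrixx{1}{a}{0}{1}=\smallmatrixx{1}{a'}{0}{1}v$ with $a'=\alpha\delta^{-1}a$ is indeed the standard device for (iii); note only that the bijectivity of $a\mapsto\alpha\delta^{-1}a$ on $(\mathfrak{f}^{-1}/\mathcal{O}_K)^\times$ needs $\alpha$ as well as $\delta$ to be a unit at the primes dividing $\mathfrak{f}$ (both follow, since $\gamma\equiv 0$ there and $\alpha\delta-\beta\gamma$ is a unit).

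However, the step you explicitly defer --- that $\varphi_\mathfrak{c}(\gamma a+\delta)$ collapses to $\varphi_\mathfrak{c}(\delta)$ --- is a genuine gap, and in the stated generality it is not merely unproven but unprovable. Writing $\varphi_\mathfrak{c}(\gamma a+\delta)=\varphi_\mathfrak{c}(\delta)\,\varphi_\mathfrak{c}(1+\gamma a\delta^{-1})$, the collapse requires $v_\mathfrak{q}(\gamma a)\geqslant v_\mathfrak{q}(\mathfrak{c})$ at every prime $\mathfrak{q}\,|\,\mathfrak{c}$. For $\mathfrak{q}\nmid\mathfrak{f}$ this holds, since $\gamma\in\mathfrak{m}\subseteq\mathfrak{n}$ and $\mathfrak{c}\,|\,\mathfrak{n}$; but at a prime $\mathfrak{q}$ dividing both $\mathfrak{c}$ and $\mathfrak{f}$, where $v_\mathfrak{q}(a)=-v_\mathfrak{q}(\mathfrak{f})$, one only has
\begin{equation*}
v_\mathfrak{q}(\gamma a)\;\geqslant\; v_\mathfrak{q}(\mathfrak{m})-v_\mathfrak{q}(\mathfrak{f})\;=\;\max\bigl(v_\mathfrak{q}(\mathfrak{n}),\,2v_\mathfrak{q}(\mathfrak{f})\bigr)-v_\mathfrak{q}(\mathfrak{f}),
\end{equation*}
which can be strictly smaller than $v_\mathfrak{q}(\mathfrak{c})$: for instance $v_\mathfrak{q}(\mathfrak{n})=v_\mathfrak{q}(\mathfrak{c})=2$ and $v_\mathfrak{q}(\mathfrak{f})=1$ give only $v_\mathfrak{q}(\gamma a)\geqslant 1$. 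In that situation $\varphi_\mathfrak{c}(1+\gamma a\delta^{-1})$ genuinely varies with $a$, the factor cannot be pulled out of the sum over $a$, and condition (iii) fails for $\Omega_0(\mathfrak{n}\cap\mathfrak{f}^2)$. This matches the classical fact (Atkin--Li, Shimura) that the twist of a form of level $N$ and nebentypus of conductor $C$ by a character of conductor $m$ has level $\mathrm{lcm}(N,m^2,Cm)$, and that the third factor is in general unavoidable. So what your computation actually proves is the proposition under the additional hypothesis $\mathfrak{c}\mathfrak{f}\,|\,\mathfrak{n}\cap\mathfrak{f}^2$ --- e.g.\ when $(\mathfrak{c},\mathfrak{f})=1$, or when $\varphi$ has trivial conductor, which covers every use of the proposition in this paper since there $\varphi=|\cdot|_{\mathbb{A}_K}^{k}$ --- and to close the gap in general you must either impose such a hypothesis or enlarge the level to $\mathfrak{n}\cap\mathfrak{f}^2\cap\mathfrak{c}\mathfrak{f}$.
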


\begin{proof}
See \cite[\S6, (6.7)]{hida1994critical}.
\end{proof}

\begin{remark}
Note that for $\mathfrak{f}=(f)$, if $\mathcal{F}$ is the descent of a cuspidal automorphic form $\Phi\in S_{(k,k)}(\Omega_0(\mathfrak{n}),\varphi)$ and we denote by $\mathcal{F}_{\psi}$ the descent of $\Phi|R(\psi)$ then by \cite[(6.9)]{hida1994critical}, we have
\begin{equation*}
\mathcal{F}_{\psi}=\sum_{b\in(\mathcal{O}_K/\mathfrak{f})^\times}  \psi_\mathfrak{f}(b/f)\mathcal{F}|_{\bonitaa{1}{b/f}{0}{1}}=\psi_\infty(f)\sum_{b\in(\mathcal{O}_K/\mathfrak{f})^\times}  \psi_\mathfrak{f}(b)\mathcal{F}|_{\bonitaa{1}{b/f}{0}{1}},
\label{R6.5}
\end{equation*}
which is analogous to the twist of a modular form by a Dirichlet character in \cite[Thm 7.4 (7.30)]{iwaniec1997topics} up to a factor of a Gauss sum (and $\psi_\infty$) which then appears in the Fourier expansion of $\mathcal{F}$ (see \cite[(6.8)]{hida1994critical}).
\label{remark6.5}
\end{remark}

\begin{lemma}
If $\mathcal{F}\in S_{(k,k)}(\Gamma_0(\mathfrak{n}),\varphi_\mathfrak{n}^{-1})$ then $\mathcal{F}|_{W_\mathfrak{n}}\in S_{(k,k)}(\Gamma_0(\mathfrak{n}),\varphi_\mathfrak{n})$.
\label{T2.3}
\end{lemma}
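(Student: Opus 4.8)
The plan is to show that $\mathcal{F}|_{W_\mathfrak{n}}$ satisfies the automorphic transformation law attached to the character $\varphi_\mathfrak{c}$ in place of $\varphi_\mathfrak{c}^{-1}$. Since $W_\mathfrak{n}$ manifestly preserves the weight $(k,k)$ and, being a scalar multiple of a fixed slash operator, carries cuspidal Bianchi forms to cuspidal Bianchi forms (on the automorphic side it is right translation by a fixed element, which preserves left $\mathrm{GL}_2(K)$-invariance, the central action $\varphi$, the Casimir eigenvalue and the cuspidal condition), this transformation law is the only substantive point. By the second part of the Remark following (\ref{e2.5}), membership in $S_{(k,k)}(\Gamma_0(\mathfrak{n}),\varphi_\mathfrak{c})$ amounts to checking that $(\mathcal{F}|_{W_\mathfrak{n}})|_\gamma=\varphi_\mathfrak{c}(d)\,\mathcal{F}|_{W_\mathfrak{n}}$ for every $\gamma=\smallmatrixx{a}{b}{c}{d}\in\Gamma_0(\mathfrak{n})$.

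The key algebraic input is conjugation by $w:=\smallmatrixx{0}{-1}{\nu}{0}$, so that $\mathcal{F}|_{W_\mathfrak{n}}=|\nu|^{k}\mathcal{F}|_{w}$. Throughout I would use that the slash operator (\ref{e2.5}) defines a right action of $\mathrm{GL}_2(\mathbb{C})$ — this follows from the cocycle identity for $J$ together with multiplicativity of $\det$ and of $\rho_{2k+2}$, and it is exactly what makes the Hecke theory above well defined. A direct computation then gives $\gamma':=w\gamma w^{-1}=\smallmatrixx{d}{-c/\nu}{-\nu b}{a}$, and one checks that $\gamma'\in\Gamma_0(\mathfrak{n})$: its entries are integral because $c\in\mathfrak{n}=(\nu)$ forces $c/\nu\in\mathcal{O}_K$; its determinant is $ad-bc=1$; and its lower-left entry $-\nu b$ lies in $\mathfrak{n}$.

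With this in hand, using the right action and the identity $w\gamma=\gamma' w$, I would write
\begin{equation*}
(\mathcal{F}|_{W_\mathfrak{n}})|_\gamma=|\nu|^{k}\,\mathcal{F}|_{w\gamma}=|\nu|^{k}\,(\mathcal{F}|_{\gamma'})|_{w}.
\end{equation*}
The bottom-right entry of $\gamma'$ is $a$, so the transformation law for $\mathcal{F}\in S_{(k,k)}(\Gamma_0(\mathfrak{n}),\varphi_\mathfrak{c}^{-1})$ (second part of the Remark after (\ref{e2.5})) gives $\mathcal{F}|_{\gamma'}=\varphi_\mathfrak{c}(a)^{-1}\mathcal{F}$; pulling this scalar through the linear operator $|_w$ yields $(\mathcal{F}|_{W_\mathfrak{n}})|_\gamma=\varphi_\mathfrak{c}(a)^{-1}\,\mathcal{F}|_{W_\mathfrak{n}}$.

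It then remains to convert $\varphi_\mathfrak{c}(a)^{-1}$ into $\varphi_\mathfrak{c}(d)$. Since $ad-bc=1$ and $\mathfrak{c}\mid\mathfrak{n}$ with $c\in\mathfrak{n}$, we have $ad\equiv1\pmod{\mathfrak{c}}$; as $\varphi_\mathfrak{c}$ depends only on classes modulo its conductor $\mathfrak{c}$ and is multiplicative, $\varphi_\mathfrak{c}(a)\varphi_\mathfrak{c}(d)=\varphi_\mathfrak{c}(ad)=1$, whence $\varphi_\mathfrak{c}(a)^{-1}=\varphi_\mathfrak{c}(d)$. This gives the desired identity and hence $\mathcal{F}|_{W_\mathfrak{n}}\in S_{(k,k)}(\Gamma_0(\mathfrak{n}),\varphi_\mathfrak{c})$. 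I expect the main obstacle to be essentially bookkeeping rather than conceptual: namely, justifying cleanly that (\ref{e2.5}) is a genuine right action despite the $\sqrt{\det\gamma}$-normalisation (so that the sign ambiguity in the square root is harmless after applying $\rho_{2k+2}$), and correctly interpreting the local-versus-global evaluation of $\varphi_\mathfrak{c}$ at the integral entries $a$ and $d$.
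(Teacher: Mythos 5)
Your proof is correct and is essentially the paper's own argument: both rest on the normalizer identity $\smallmatrixx{0}{-1}{\nu}{0}\gamma=\gamma'\smallmatrixx{0}{-1}{\nu}{0}$ with $\gamma'=\smallmatrixx{d}{-c/\nu}{-b\nu}{a}\in\Gamma_0(\mathfrak{n})$, the right-action property of the slash operator, the transformation law $\mathcal{F}|_{\gamma'}=\varphi_\mathfrak{c}(a)^{-1}\mathcal{F}$, and the congruence $ad\equiv1\pmod{\mathfrak{n}}$ to convert $\varphi_\mathfrak{c}(a)^{-1}$ into $\varphi_\mathfrak{c}(d)$. The additional points you spell out (integrality and determinant of $\gamma'$, harmlessness of the $\sqrt{\det}$ sign ambiguity under $\rho_{2k+2}$, preservation of cuspidality) are correct and are left implicit in the paper's shorter write-up.
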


\begin{proof}
Note that $\smallmatrixx{0}{-1}{\nu}{0}$ normalizes the group $\Gamma_0(\mathfrak{n})$, explicitly $\smallmatrixx{0}{-1}{\nu}{0}\gamma=\gamma'\smallmatrixx{0}{-1}{\nu}{0}$ where $\gamma'=\smallmatrixx{d}{-c/\nu}{-b\nu}{a}$ if $\gamma=\smallmatrixx{a}{b}{c}{d}$. Hence, for $\mathcal{F}\in S_{(k,k)}(\Gamma_0(\mathfrak{n}),\varphi_\mathfrak{n}^{-1})$ and $\gamma\in\Gamma_0(\mathfrak{n})$ we have
\begin{equation*}
(\mathcal{F}|_{W_\mathfrak{n}})|_\gamma =|\nu|^{k}\mathcal{F}|_{\bonitaa{0}{-1}{\nu}{0}\gamma} =|\nu|^{k}\mathcal{F}|_{\gamma'{\bonitaa{0}{-1}{\nu}{0}}} =\varphi_\mathfrak{n}(\gamma')^{-1}\mathcal{F}|_{W_\mathfrak{n}} =\varphi_\mathfrak{n}(\gamma)\mathcal{F}|_{W_\mathfrak{n}},  
\end{equation*}
where in last equality we use that $ad\equiv1\mod{\mathfrak{n}}$.
\end{proof}

\begin{proposition}
Let $\mathcal{F}\in S_{(k,k)}(\Gamma_0(\mathfrak{n}),\varphi_\mathfrak{n}^{-1})$ be a Bianchi modular form and let $\psi$ be a Hecke character of conductor $\mathfrak{f}$ with $(\mathfrak{n},\mathfrak{f})=1$. Then
\begin{equation*}
\mathcal{F}_{\psi}|_{W_\mathfrak{m}}=\varphi_\mathfrak{n}(f)^{-1}\psi_\mathfrak{f}(-\nu)^{-1}\psi_\infty(f)^2\left(\mathcal{F}|_{W_\mathfrak{n}}\right)_{\psi^{-1}},
\end{equation*}
for $(m)=\mathfrak{m}=\mathfrak{n}\mathfrak{f}^2=(\nu)(f)^2$, $m=\nu f^2$.
\label{T6.7}
\end{proposition}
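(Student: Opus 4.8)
The plan is to expand both sides as explicit finite sums of normalised slash operators applied to $\mathcal{F}$ and to match them term by term after a reindexing. Write $\mathfrak{f}=(f)$. Using the explicit twisting formula $\mathcal{F}_\psi=\sum_{b\in(\mathcal{O}_K/\mathfrak{f})^\times}\psi_\mathfrak{f}(b)\,\mathcal{F}|_{\smallmatrixx{1}{b/f}{0}{1}}$ together with the definition of the Fricke involution $W_\mathfrak{m}$ and the fact that the slash action is a right action ($\mathcal{F}|_{\gamma_1}|_{\gamma_2}=\mathcal{F}|_{\gamma_1\gamma_2}$), the left-hand side becomes
\begin{equation*}
\mathcal{F}_\psi|_{W_\mathfrak{m}}=|m|^k\sum_{b\in(\mathcal{O}_K/\mathfrak{f})^\times}\psi_\mathfrak{f}(b)\,\mathcal{F}|_{M_b},\qquad M_b:=\smallmatrixx{1}{b/f}{0}{1}\smallmatrixx{0}{-1}{m}{0}=\smallmatrixx{b\nu f}{-1}{\nu f^2}{0},
\end{equation*}
where I have used $m=\nu f^2$. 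Expanding the right-hand side in the same way gives $(\mathcal{F}|_{W_\mathfrak{n}})_{\psi^{-1}}=|\nu|^k\sum_{b'}\psi^{-1}_\mathfrak{f}(b')\,\mathcal{F}|_{N_{b'}}$ with $N_{b'}:=\smallmatrixx{0}{-1}{\nu}{\nu b'/f}$.

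The heart of the argument is a matrix identity linking $M_b$ and $N_{b'}$. I would compute $\gamma:=M_bN_{b'}^{-1}=\smallmatrixx{b\nu b'+1}{bf}{\nu fb'}{f^2}$, which has determinant $f^2$, and set $\tilde\gamma:=\tfrac1f\gamma$, so that $\det\tilde\gamma=1$ and $M_b=\tilde\gamma\,(fN_{b'})$, the scalar $f$ being central. The crucial point is that $\tilde\gamma$ lies in $\Gamma_0(\mathfrak{n})$ exactly when its top-left entry $(b\nu b'+1)/f$ is integral, i.e. when $b\nu b'\equiv-1\pmod{\mathfrak{f}}$; because $(\mathfrak{n},\mathfrak{f})=1$ makes $\nu$ a unit modulo $\mathfrak{f}$, for each $b$ this determines a unique $b'\equiv-(b\nu)^{-1}\pmod{\mathfrak{f}}$, and $b\mapsto b'$ is a bijection of $(\mathcal{O}_K/\mathfrak{f})^\times$. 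The remaining entries of $\tilde\gamma$ are then integral with lower-left entry $\nu b'\in\mathfrak{n}$, confirming $\tilde\gamma\in\Gamma_0(\mathfrak{n})$.

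The rest is bookkeeping. Since the bottom-right entry of $\tilde\gamma$ is $f$, the automorphic invariance $\mathcal{F}|_\gamma=\varphi_\mathfrak{c}(d)^{-1}\mathcal{F}$ for $\gamma\in\Gamma_0(\mathfrak{n})$ (equation (\ref{e2.4})) gives $\mathcal{F}|_{\tilde\gamma}=\varphi_\mathfrak{c}(f)^{-1}\mathcal{F}$, while the normalisations $|\det\gamma|^{-k}$ and $\gamma/\sqrt{\det\gamma}$ in (\ref{e2.5}) yield the scaling law $\mathcal{F}|_{fN_{b'}}=|f|^{-2k}\mathcal{F}|_{N_{b'}}$; together these give $\mathcal{F}|_{M_b}=\varphi_\mathfrak{c}(f)^{-1}|f|^{-2k}\mathcal{F}|_{N_{b'}}$. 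Using $|m|^k|f|^{-2k}=|\nu|^k$ collapses the left-hand side to $\varphi_\mathfrak{c}(f)^{-1}|\nu|^k\sum_b\psi_\mathfrak{f}(b)\,\mathcal{F}|_{N_{b'}}$. Finally I translate the character value via $b'\equiv-(b\nu)^{-1}$, obtaining $\psi_\mathfrak{f}(b)=\psi_\mathfrak{f}(-1)\psi_\mathfrak{f}(\nu)^{-1}\psi^{-1}_\mathfrak{f}(b')=\psi_\mathfrak{f}(-\nu)^{-1}\psi^{-1}_\mathfrak{f}(b')$, where the last equality uses $\psi_\mathfrak{f}(-1)^2=1$; reindexing by the bijection $b\mapsto b'$ then produces exactly $\varphi_\mathfrak{c}(f)^{-1}\psi_\mathfrak{f}(-\nu)^{-1}(\mathcal{F}|_{W_\mathfrak{n}})_{\psi^{-1}}$.

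I expect the main obstacle to be isolating the correct decomposition $M_b=\tilde\gamma(fN_{b'})$ together with the matching reindexing $b\mapsto b'$, and in particular verifying carefully that $\tilde\gamma$ genuinely lands in $\Gamma_0(\mathfrak{n})$ (integrality of every entry and $\mathfrak{n}$-divisibility of the lower-left entry); this is precisely where the coprimality hypothesis $(\mathfrak{n},\mathfrak{f})=1$ enters. The scaling identity for the normalised slash operator and the elementary manipulation $\psi_\mathfrak{f}(-1)^2=1$ are comparatively routine once the decomposition is in place.
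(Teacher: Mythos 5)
Your proof is correct and is essentially the paper's own argument: the same expansion of both sides via the twisting formula of Remark \ref{R6.5}, the same congruence $b\nu b'\equiv-1\pmod{\mathfrak{f}}$ producing a $\Gamma_0(\mathfrak{n})$-matrix and the bijective reindexing $b\mapsto b'$ (your $b'$ is the paper's $v$, and your $\tilde\gamma$ is exactly the $W_\mathfrak{n}$-conjugate $\smallmatrixx{0}{-1}{\nu}{0}\gamma_v\smallmatrixx{0}{-1}{\nu}{0}^{-1}$ of the paper's element $\gamma_v=\smallmatrixx{f}{-v}{-b\nu}{\frac{1+bv\nu}{f}}$, the conjugation being the one computed in the proof of Lemma \ref{T2.3}), followed by the identical character bookkeeping $\psi_\mathfrak{f}(b)=\psi_\mathfrak{f}(-\nu)^{-1}\psi_\mathfrak{f}(b')^{-1}$. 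The only difference is organizational: by pulling $\tilde\gamma$ to the left of $W_\mathfrak{n}$ you read off $\varphi_\mathfrak{c}(f)^{-1}$ directly from the automorphy of $\mathcal{F}$ itself (the $d$-entry of $\tilde\gamma$ is literally $f$), whereas the paper keeps its $\Gamma_0(\mathfrak{n})$-element to the right of $W_\mathfrak{n}$ and therefore needs Lemma \ref{T2.3} (that $\mathcal{F}|_{W_\mathfrak{n}}$ has level $\Gamma_0(\mathfrak{n})$ and nebentypus $\varphi_\mathfrak{c}$) plus the relation $ad\equiv1\pmod{\mathfrak{n}}$ to extract the same factor.
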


\begin{proof}
Since for any $v$ we have the identity $\bonitaa{1}{\frac{b}{f}}{0}{1}
\bonitaa{0}{-1}{m}{0} = \scalebox{0.8}{%
$f$} \cdot \bonitaa{0}{-1}{\nu}{0} \bonitaa{f}{-v}{-b\nu}{\frac{1+bv\nu}{f}} \bonitaa{1}{\frac{v}{f}}{0}{1}$ and choosing $v$ such that $bv\nu\equiv-1$ (mod$f$) to bring $\smallmatrixx{f}{-v}{-b\nu}{\frac{1+bv\nu}{f}}$ into $\Gamma_0(\mathfrak{n})$, then 
\begin{equation}
\begin{array}{ll}
\mathcal{F}|_{\bonitaa{1}{\frac{b}{f}}{0}{1}W_\mathfrak{m}}
&=|m|^{k}\mathcal{F}|_{\bonitaa{1}{\frac{b}{f}}{0}{1}\bonitaa{0}{-1}{m}{0}}
=|m|^{k}\mathcal{F}|_{\scalebox{0.8}{%
$f$}\cdot\bonitaa{0}{-1}{\nu}{0}\bonitaa{f}{-v}{-b\nu}{\frac{1+bv\nu}{f}}\bonitaa{1}{\frac{v}{f}}{0}{1}} \\
&=|m|^{k}|f^2|^{-k}\mathcal{F}|_{\bonitaa{0}{-1}{\nu}{0} \bonitaa{f}{-v}{-b\nu}{\frac{1+bv\nu}{f}} \bonitaa{1}{\frac{v}{f}}{0}{1}}\\
&=|m|^{k}|f^2|^{-k}|\nu|^{-k}\left(\mathcal{F}|_{W_\mathfrak{n}}\right)|_{\bonitaa{f}{-v}{-b\nu}{\frac{1+bv\nu}{f}}\bonitaa{1}{\frac{v}{f}}{0}{1}}\\
&=\left(\mathcal{F}|_{W_\mathfrak{n}}\right)|_{\bonitaa{f}{-v}{-b\nu}{\frac{1+bv\nu}{f}} \bonitaa{1}{\frac{v}{f}}{0}{1}}=\varphi_\mathfrak{n}(f)^{-1}\left(\mathcal{F}|_{W_\mathfrak{n}}\right)|_{\bonitaa{1}{\frac{v}{f}}{0}{1}}.
\label{e2.1}
\end{array}
\end{equation}

Where in the last equalities we use that $m=\nu f^{2}$ and $\mathcal{F}|_{W_\mathfrak{n}}\in S_{(k,k)}(\Gamma_0(\mathfrak{n}),\varphi_\mathfrak{n})$ by Lemma \ref{T2.3}. Since we have that $bv\nu\equiv-1$ (mod$f$), then $\psi_\mathfrak{f}(b)=\psi_\mathfrak{f}(-\nu)^{-1}\psi_\mathfrak{f}(v)^{-1}$. Now multiplying (\ref{e2.1}) by the latter and summing over the reduced residue class of $(\mathcal{O}_K/\mathfrak{f})^\times$ we obtain
\begin{equation*}
\sum_{b\in(\mathcal{O}_K/\mathfrak{f})^\times}  \psi_\mathfrak{f}(b)\mathcal{F}|_{\bonitaa{1}{b/f}{0}{1}W_\mathfrak{m}}=\varphi_\mathfrak{n}(f)^{-1}\psi_\mathfrak{f}(-\nu)^{-1}\sum_{v\in(\mathcal{O}_K/\mathfrak{f})^\times}\psi_\mathfrak{f}(v)^{-1}\left(\mathcal{F}|_{W_\mathfrak{n}}\right)|_{\bonitaa{1}{\frac{v}{f}}{0}{1}}.
\end{equation*}
Multiplying by $\psi_\infty(f)$ in both sides and using Remark \ref{R6.5} we have
\begin{equation*}
\mathcal{F}_{\psi}|_{W_\mathfrak{m}}=\varphi_\mathfrak{n}(f)^{-1}\psi_\mathfrak{f}(-\nu)^{-1}\psi_\infty(f)\sum_{v\in(\mathcal{O}_K/\mathfrak{f})^\times}\psi_\mathfrak{f}(v)^{-1}\left(\mathcal{F}|_{W_\mathfrak{n}}\right)|_{\bonitaa{1}{\frac{v}{f}}{0}{1}}.
\end{equation*}
Finally, since $\psi_\mathfrak{f}(v)^{-1}=\psi_\infty(f)(\psi_\infty(f)^{-1}\psi_\mathfrak{f}(v)^{-1})=\psi_\infty(f)\psi_\mathfrak{f}(v/f)^{-1}$ we obtain the result.
\end{proof}

The proposition above is a generalization to the Bianchi setting of \cite[Thm 7.5]{iwaniec1997topics} up to a Gauss sum and $\psi_\infty$ as explained in Remark \ref{remark6.5}.

\section{$L$-function}\label{L-function}

\subsection{Definition of the $L$-function}\hfill\\

Let $\psi$ be a Hecke character with conductor $\mathfrak{f}$, for each ideal $\mathfrak{m}=\prod_{\mathfrak{q}|\mathfrak{m}}\mathfrak{q}^{n_{\mathfrak{q}}}$ coprime to $\mathfrak{f}$, we define $\psi(\mathfrak{m})=\prod_{\mathfrak{q}|\mathfrak{m}}\psi_{\mathfrak{q}}(\pi_{\mathfrak{q}})^{n_{\mathfrak{q}}}$ with $\mathfrak{q}=(\pi_{\mathfrak{q}})$ a prime ideal; and $\psi(\mathfrak{m})=0$ if $\mathfrak{m}$ is not coprime to $\mathfrak{f}$. In an abuse of notation, we write $\psi$ for both the idelic Hecke character and the function it determines on ideals; will always be clear from the context which formulation we mean.

Let $\Phi$ be an automorphic form, define the twist of the $L$-function of $\Phi$ by $\psi$ by
\begin{equation*}
L(\Phi,\psi,s)=\sum_{0\neq\mathfrak{m}\subset\mathcal{O}_K} c(\mathfrak{m},\Phi)\psi(\mathfrak{m})N(\mathfrak{m})^{-s},
\end{equation*}
where $c(\cdot,\Phi)$ are the Fourier coefficients of $\Phi$.

\begin{remark}
In \cite{weil1971dirichlet}, it is proved that the twisted $L$-function converges absolutely in some suitable right half-plane. The $L$-function can be written in terms of an integral formula, then via meromorphic continuation, this integral gives the definition of the $L$-function on all of $\mathbb{C}$. In fact, a little more work shows that this function is an \textit{analytic} continuation and the $L$-function is holomorphic on the whole complex plane.
\label{T1.13}
\end{remark}

Let $\mathcal{F}$ be a Bianchi modular form corresponding to the automorphic form $\Phi$, then
\begin{equation*}
L(\Phi,\psi,s)=L(\mathcal{F},\psi,s)=w^{-1}\sum_{\alpha\in K^{\times}} c(\alpha \delta)\psi((\alpha \delta))N((\alpha \delta))^{-s},
\end{equation*}
where $w=|\mathcal{O}_K^\times|$ and $c(\cdot)$ are the Fourier coefficients of $\mathcal{F}$ as in (\ref{e1.6}).

It is convenient to think the twisted $L$-function as a function on Hecke characters instead as a complex function of one variable. Then we put
\begin{equation*}
L(\mathcal{F},\psi)=L(\mathcal{F},\psi,1).   
\end{equation*}

We \textit{complete} the $L$-function by adding the appropriate factors at infinity. If the infinity type of $\psi$ is $(q,r)$ then we define
\begin{equation*}
\Lambda(\mathcal{F},\psi):=\frac{\Gamma(q+1)\Gamma(r+1)}{(2\pi i)^{q+1}(2\pi i)^{r+1}}L(\mathcal{F},\psi), 
\end{equation*}
where $\Gamma$ is the usual Gamma function. This is the $L$-function renormalised by Deligne's $\Gamma$-factors at infinity.

In \cite[Thm 2.11]{chris2017} is proved:

\begin{theorem}
Let $\mathcal{F}\in S_{(k,k)}(\Gamma_0(\mathfrak{n}))$ then for a Hecke character $\psi$ of conductor $\mathfrak{f}=(f)$ and infinity type $0 \leqslant (q,r) \leqslant (k,k)$ we have 
\begin{equation*}
\Lambda(\mathcal{F},\psi)= \frac{(-1)^{k+q+r}2}{Dw\tau(\psi^{-1})} \sum_{b\in(\mathcal{O}_K/f)^\times}\psi_{\mathfrak{f}}(b/f)c_{q,r}(b/f),
\end{equation*}
where $\tau(\psi^{-1})$ is the Gauss sum from \cite[\S 1.2.3.]{chris2017} and for $a\in K$
\begin{equation*}
c_{q,r}(a):=2\matrixxx{2k+2}{k+q-r+1}^{-1} (-1)^{k+r+1}\int_{0}^{\infty}t^{q+r}\mathcal{F}_{k+q-r+1}(a,t)dt.    
\label{T8.2}
\end{equation*}
\end{theorem}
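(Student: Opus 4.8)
The plan is to realise the completed value $\Lambda(\mathcal{F},\psi)$ as an explicit Mellin transform of the Fourier--Whittaker expansion (\ref{e1.6}) and then to read off the quantities $c_{q,r}(b/f)$ from the resulting integral. I would begin from the definition
\[
\Lambda(\mathcal{F},\psi)=\frac{\Gamma(q+1)\Gamma(r+1)}{(2\pi i)^{q+1}(2\pi i)^{r+1}}\,w^{-1}\sum_{\alpha\in K^\times/\mathcal{O}_K^\times}c(\alpha\delta)\,\psi((\alpha\delta))\,N((\alpha\delta))^{-1},
\]
and the first reduction is to convert the multiplicative twist $\psi((\alpha\delta))$ into additive characters. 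Since $\psi$ has conductor $\mathfrak{f}=(f)$, the standard Gauss-sum identity expresses $\psi_{\mathfrak{f}}$ against the additive character, so that summing $\psi_{\mathfrak{f}}(b)\,e^{2\pi i(\alpha b/f+\overline{\alpha b/f})}$ over $b\in(\mathcal{O}_K/f)^\times$ reproduces $\psi((\alpha\delta))$ up to the Gauss sum $\tau(\psi^{-1})$ and the archimedean factor $\psi_\infty(f)$ recorded in the statement. The crucial observation is that these phases $e^{2\pi i(\alpha b/f+\overline{\alpha b/f})}$ are exactly the exponentials in (\ref{e1.6}) evaluated at $z=b/f$.

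The analytic heart of the argument is the Bessel integral. Choosing the Fourier index $n=k+q-r+1$ (so that $k+1-n=r-q$ and $n-k-1=q-r$), I would integrate the component $\mathcal{F}_n(b/f,t)$ against $t^{q+r}$ over $t\in(0,\infty)$. The only $t$-dependence lives in the modified Bessel function, and the Mellin transform
\[
\int_0^\infty t^{q+r+1}K_{q-r}(4\pi|\alpha|t)\,dt=2^{q+r}(4\pi|\alpha|)^{-(q+r+2)}\Gamma(q+1)\Gamma(r+1)
\]
produces precisely the Deligne $\Gamma$-factors $\Gamma(q+1)\Gamma(r+1)$ together with the power $|\alpha|^{-(q+r+2)}$. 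The exponential decay of $K_{q-r}$ guarantees the absolute convergence needed to interchange the sum over $\alpha$ with the integral (cf. Remark \ref{T1.13}). Assembling the sum over $\alpha$ with this Bessel integral, the binomial weight $\binom{2k+2}{n}$ of (\ref{e1.6}), and the phase $(\alpha/i|\alpha|)^{r-q}$, the integral $\int_0^\infty t^{q+r}\mathcal{F}_{k+q-r+1}(b/f,t)\,dt$ packages exactly $c_{q,r}(b/f)$ up to the normalising constant $2\binom{2k+2}{k+q-r+1}^{-1}(-1)^{k+r+1}$ built into its definition; note the $\binom{2k+2}{n}$ from (\ref{e1.6}) cancels against the inverse binomial in $c_{q,r}$.

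Finally I would reassemble: multiplying the previous identity by $\psi_{\mathfrak{f}}(b)$, summing over $b\in(\mathcal{O}_K/f)^\times$, and feeding in the Gauss-sum relation recovers $w^{-1}\sum_\alpha c(\alpha\delta)\psi((\alpha\delta))N((\alpha\delta))^{-1}$ weighted by the $\Gamma$-factors, which is exactly $\Lambda(\mathcal{F},\psi)$. The remaining work is bookkeeping of constants, and I expect this to be the genuine obstacle rather than any single conceptual step. One must track simultaneously: the phase $(\alpha/i|\alpha|)^{r-q}$ against $(2\pi i)^{-(q+1)}(2\pi i)^{-(r+1)}$, whose product $i^{q-r}\cdot i^{-(q+r+2)}=(-1)^{r+1}$ combines with the $(-1)^{k+r+1}$ of $c_{q,r}$ and with further signs from the $\delta$-powers and the $\tau(\psi)\leftrightarrow\tau(\psi^{-1})$ conversion to yield the overall $(-1)^{k+q+r}$; the identity $N((\alpha\delta))=D|\alpha|^2$, using $N(\delta)=D$, which explains the factor $D$ in the denominator and matches the phase $|\alpha|^{-(q+r+2)}(\alpha/|\alpha|)^{r-q}=\alpha^{-(q+1)}\overline\alpha^{-(r+1)}$ to $\psi((\alpha\delta))N((\alpha\delta))^{-1}$; the factor $w=|\mathcal{O}_K^\times|$ from passing between elements and ideals; and the archimedean contribution $\psi_\infty(f)$ emerging from the Gauss sum. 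A useful internal consistency check is that the powers of $2$ and $\pi$ already balance: the leading $2$, the factor $2$ in $c_{q,r}$, the $2^{q+r}$ of the Bessel integral, and $(4\pi)^{-(q+r+2)}$ combine to $2^{-(q+r+2)}\pi^{-(q+r+2)}$, matching $(2\pi)^{-(q+r+2)}$ from the definition of $\Lambda$. Getting every remaining power of $i$ and every sign to line up is the delicate part; the structural unfolding via the Bessel Mellin transform is routine once set up.
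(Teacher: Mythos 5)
First, a framing note: the paper itself does not prove this statement; it imports it verbatim from Theorem 2.11 of \cite{chris2017} (adding only a remark correcting a sub-index), and the proof in that reference is exactly the unfolding you describe. So your skeleton is the right one, and several of your checks are genuinely correct: the Bessel--Mellin evaluation $\int_0^\infty t^{q+r+1}K_{q-r}(4\pi|\alpha|t)\,dt=2^{q+r}(4\pi|\alpha|)^{-(q+r+2)}\Gamma(q+1)\Gamma(r+1)$, the balance of powers of $2$ and $\pi$, the identity $N((\alpha\delta))=D|\alpha|^{2}$, and the phase computation $|\alpha|^{-(q+r+2)}(\alpha/|\alpha|)^{r-q}=\alpha^{-(q+1)}\overline{\alpha}^{-(r+1)}$.

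There are, however, two genuine gaps. The first is the interchange of $\sum_{\alpha}$ with $\int_0^\infty dt$: exponential decay of $K_{q-r}$ does not justify it. After the termwise Mellin transform, Fubini requires $\sum_{\alpha}|c(\alpha\delta)|\,|\alpha|^{-(q+r+2)}<\infty$, and since the coefficients of a weight-$(k,k)$ cusp form grow like $N(\mathfrak{m})^{(k+1)/2}$ (even granting Ramanujan), this converges only for $q+r>k+1$; it fails on most of the critical range, e.g.\ at $(q,r)=(0,0)$ for every $k\geqslant 0$. The correct mechanism --- and the actual content of Remark \ref{T1.13}, which you cite but as if it supplied absolute convergence --- is to run the computation with $t^{s}$ for $\mathrm{Re}(s)\gg 0$, where unfolding is legitimate, observe that both sides are entire in $s$ (the integral side because the cusp form decays rapidly at both ends of $(0,\infty)$, the $L$-side by analytic continuation), and then specialise $s$ to the critical point. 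The second gap is that the entire content of the theorem is an exact constant, and your sign bookkeeping stops short of producing it: you derive $(-1)^{r+1}$ from the powers of $i$ and $(-1)^{k+r+1}$ from the definition of $c_{q,r}$, whose product is $(-1)^{k}$, but the remaining $(-1)^{q+r}$ is merely asserted to come ``from the $\delta$-powers and the $\tau(\psi)\leftrightarrow\tau(\psi^{-1})$ conversion'' --- that is assuming the conclusion. Closing this requires writing the Gauss-sum identity precisely, including that $\sum_{b}\psi_{\mathfrak{f}}(b)e_K(\alpha b/f)$ vanishes unless $(\alpha,f)=1$ (which is also what matches the vanishing $\psi((\alpha\delta))=0$ for non-coprime $\alpha$, a step you do not address), and tracking the phases $\delta^{q}\overline{\delta}^{r}=i^{q-r}D^{(q+r)/2}$ and $\psi_{\infty}(f)$ through to the end.
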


\begin{remark}
Note that in Theorem \ref{T8.2}, the coefficient $c_{q,r}(a)$ differs from \cite[Prop. 2.9]{chris2017} where there is a slight error with the sub-index of $\mathcal{F}$. 
\end{remark}

In \cite[Thm. 8.1]{hida1994critical} is proved that the ``critical'' values of the $L$-function can be controlled, i.e. there exists a period $\Omega_\mathcal{F}\in\mathbb{C}^\times$ and a number field $E$ such that
\begin{equation}
\frac{\Lambda(\mathcal{F},\psi)}{\Omega_\mathcal{F}}\in E(\psi),
\label{e7.1}
\end{equation}
where $E(\psi)$ is the number field generated by the values of $\psi$.

\subsection{Functional equation of the complex $L$-function}\hfill\\

For the rest of the paper we work on the space $S_{(k,k)}(\Gamma_0(\mathfrak{n}))$, in particular we henceforth assume the central action $\varphi$ has trivial conductor. We also use the fact that if $\mathcal{F}\in S_{(k,k)}(\Gamma_0(\mathfrak{n}))$ is a Bianchi \textit{newform}, i.e., it is an eigenform for all the Hecke operators and is not induced from a Bianchi modular form with level properly dividing $\mathfrak{n}$; then $\mathcal{F}$ is an eigenvector for the Fricke involution $W_\mathfrak{n}$ with $\mathcal{F}|_{W_\mathfrak{n}}=\epsilon(\mathfrak{n})\mathcal{F}$ for $\epsilon(\mathfrak{n})=\pm1$ (see  \cite[\S2]{cremona1994}).

\begin{theorem}
Let $\mathcal{F}\in S_{(k,k)}(\Gamma_0(\mathfrak{n}))$ be a newform with $\mathfrak{n}=(\nu)$, then for a Hecke character $\psi$ of $K$ of conductor $\mathfrak{f}=(f)$ with $(f,\nu)=1$ and infinity type $0 \leqslant (q,r) \leqslant (k,k)$, we have 
\begin{equation*}
\Lambda(\mathcal{F},\psi)=\frac{-\epsilon(\mathfrak{n})|\nu|^k\tau(\psi|\cdot|_{\mathbb{A}_K}^{-k})}{\psi_\mathfrak{f}(-\nu)\psi_\infty(-\nu)\tau(\psi^{-1})}\Lambda(\mathcal{F},\psi^{-1}|\cdot|_{\mathbb{A}_K}^{k}).
\label{T16.1}
\end{equation*}
\end{theorem}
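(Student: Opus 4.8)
The plan is to derive the functional equation by combining the explicit formula for $\Lambda(\mathcal{F},\psi)$ from Theorem \ref{T8.2} with the behaviour of the twisted form $\mathcal{F}_\psi$ under the Fricke involution recorded in Proposition \ref{T6.7}. The key observation is that the coefficient $c_{q,r}(a)$ appearing in Theorem \ref{T8.2} is, up to explicit constants, an integral of a Fourier component $\mathcal{F}_{k+q-r+1}(a,t)$; and Lemma \ref{T1.19} tells us exactly how these Fourier components transform under $W_\mathfrak{n}$. Since $\mathcal{F}$ is a newform we have $\mathcal{F}|_{W_\mathfrak{n}}=\epsilon(\mathfrak{n})\mathcal{F}$, so applying $W_\mathfrak{n}$ ought to relate $\Lambda(\mathcal{F},\psi)$ to $\Lambda(\mathcal{F},\psi')$ for a dual character $\psi'$, which a quick check of infinity types identifies as $\psi^{-1}|\cdot|_{\mathbb{A}_K}^k$ (of infinity type $(k-q,k-r)$, still in the critical range).

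\textbf{Step 1.} Using Theorem \ref{T8.2}, write $\Lambda(\mathcal{F},\psi)$ as a sum over $b\in(\mathcal{O}_K/f)^\times$ of $\psi_\mathfrak{f}(b)$ times an integral of $\mathcal{F}_{k+q-r+1}(b/f,t)$ against $t^{q+r}\,dt$. By Remark \ref{R6.5}, the sum $\sum_b \psi_\mathfrak{f}(b)\,\mathcal{F}|_{\smallmatrixx{1}{b/f}{0}{1}}$ is exactly the twisted form $\mathcal{F}_\psi$, so I would reinterpret the whole expression as an integral built from the Fourier components of $\mathcal{F}_\psi$ evaluated at $a=0$.

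\textbf{Step 2.} Apply Proposition \ref{T6.7} with $\varphi=|\cdot|_{\mathbb{A}_K}^k$ to convert $\mathcal{F}_\psi|_{W_\mathfrak{m}}$ into $\left(\mathcal{F}|_{W_\mathfrak{n}}\right)_{\psi^{-1}}=\epsilon(\mathfrak{n})\,(\mathcal{F})_{\psi^{-1}}$, picking up the factor $\varphi_\mathfrak{c}(f)^{-1}\psi_\mathfrak{f}(-\nu)^{-1}$ (here $\mathfrak{c}$ is trivial so $\varphi_\mathfrak{c}(f)=1$). Then invoke Lemma \ref{T1.19} to rewrite the Fourier components $(\mathcal{F}_\psi|_{W_\mathfrak{m}})_{\bullet}(0,t)$ in terms of $(\mathcal{F}_\psi)_{\bullet}(0,1/(|m|t))$, which after the substitution $t\mapsto 1/(|m|t)$ in the integral converts the $t^{q+r}$ weight into a $t^{(k-q)+(k-r)}$ weight — precisely the weight attached to the dual infinity type. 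This is where the powers $|\nu|^k$, the sign $(-1)^{\text{something}}$, and the factor $\nu^{n-k-1}$ from Lemma \ref{T1.19} enter and must be assembled.

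\textbf{The hard part} will be the bookkeeping in Step 2: tracking the precise powers of $\nu$, $f$, $|m|$, $i$ and the binomial coefficients so that they collapse into the stated Gauss-sum ratio $\tau(\psi|\cdot|_{\mathbb{A}_K}^{-k})/\tau(\psi^{-1})$ and the prefactor $-\epsilon(\mathfrak{n})|\nu|^k/(\psi_\mathfrak{f}(-\nu)\psi_\infty(-\nu))$. In particular, comparing the two applications of Theorem \ref{T8.2} (once for $\psi$, once for $\psi^{-1}|\cdot|_{\mathbb{A}_K}^k$) produces two Gauss sums whose quotient must be matched against the definition of $\tau$ in \cite{chris2017}, and the $\psi_\infty$-factor arises from how $\psi_\infty(f)$ in Theorem \ref{T8.2} interacts with the scaling by $m=\nu f^2$ under the substitution. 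I expect no conceptual obstacle beyond this, since all three inputs — the explicit $L$-value formula, the Fricke eigenvalue property of the newform, and the Fricke-twist interchange of Proposition \ref{T6.7} — are already in hand; the content is in verifying that the constants conspire exactly as claimed.
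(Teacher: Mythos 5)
Your proposal follows essentially the same route as the paper's own proof: express $\Lambda(\mathcal{F},\psi)$ via Theorem \ref{T8.2}, recognize the character sum as the Fourier component at $0$ of the twisted form (Remark \ref{R6.5}), perform the substitution $t\mapsto 1/(|m|t)$, apply Lemma \ref{T1.19} and Proposition \ref{T6.7} together with the newform property $\mathcal{F}|_{W_\mathfrak{n}}=\epsilon(\mathfrak{n})\mathcal{F}$, and conclude with a second application of Theorem \ref{T8.2} to the dual character $\psi^{-1}|\cdot|_{\mathbb{A}_K}^{k}$ of infinity type $(k-q,k-r)$, exactly as the paper does. The one point to make precise is that the twisting operator is defined only for finite-order characters, so the twist is by $\varphi=\psi|\psi|^{-1}$ rather than by $\psi$ itself; since $\varphi_\mathfrak{f}=\psi_\mathfrak{f}$ this is harmless and is precisely how the paper handles it.
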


\begin{proof}
By Theorem \ref{T8.2} we know that for a Hecke character $\psi$ of $K$ of conductor $\mathfrak{f}=(f)$ and infinity type $0 \leqslant (q,r) \leqslant (k,k)$, we have 
\begin{align*}
\Lambda(\mathcal{F},\psi)&= \frac{(-1)^{k+q+r}2}{Dw\tau(\psi^{-1})} \sum_{b\in(\mathcal{O}_K/f)^\times}\psi_{\mathfrak{f}}(b/f)c_{q,r}(b/f)\\
&=\frac{(-1)^{k+q+r}2}{Dw\tau(\psi^{-1})} \sum_{b\in(\mathcal{O}_K/f)^\times}\psi_{\mathfrak{f}}(b/f)\left[\frac{2(-1)^{k+r+1}}{\matrixxx{2k+2}{k+q-r+1}} \int_{0}^{\infty}t^{q+r}\mathcal{F}_{k+q-r+1}(b/f,t)dt\right]\\
&=\frac{(-1)^{q+1}4}{Dw\tau(\psi^{-1})\matrixxx{2k+2}{k+q-r+1}}\int_{0}^{\infty}t^{q+r}\left[\sum_{b\in(\mathcal{O}_K/f)^\times}\psi_{\mathfrak{f}}(b/f) \mathcal{F}_{k+q-r+1}(b/f,t)\right]dt \\
&=\frac{(-1)^{q+1}4}{Dw\tau(\psi^{-1})\matrixxx{2k+2}{k+q-r+1}} \int_{0}^{\infty}t^{q+r}\mathcal{F}_{\psi,k+q-r+1}(0,t)dt,
\end{align*}
where last equality comes from Remark \ref{R6.5}.\\
Changing variable $t\rightarrow 1/(|m|t)$ we have 
\begin{equation*}
\Lambda(\mathcal{F},\psi)=\frac{(-1)^{q+1}4|m|^{-q-r-1}}{Dw\tau(\psi^{-1})\matrixxx{2k+2}{k+q-r+1}} \int_{0}^{\infty}t^{-q-r-2}\mathcal{F}_{\psi,k+q-r+1}(0,1/(|m|t))dt.
\end{equation*}
Recall that $\mathcal{F}_\psi$ has weight $[(k,k),(-q,-r)]$ then by Lemma \ref{T1.19} we know that
\begin{equation*}
(\mathcal{F}_{\psi}|_{W_\mathfrak{m}})_{k-q+r+1}(0,t)=t^{-2k-2}(-1)^{k+q-r+1}m^{q-r}|m|^{-(k+q-r+1)}\mathcal{F}_{\psi,k+q-r+1}(0,1/(|m|t))   
\end{equation*}
and replacing $\mathcal{F}_{\psi,k+q-r+1}(0,1/(|m|t))$ above we have
\begin{align*}
\Lambda(\mathcal{F},\psi)&=\frac{(-1)^{q+1}4|m|^{-q-r-1}}{Dw\tau(\psi^{-1})\matrixxx{2k+2}{k+q-r+1}}\\
&\times \int_{0}^{\infty}t^{-q-r-2}\left[t^{2k+2}(-1)^{k+q-r+1}m^{-q+r}|m|^{k+q-r+1}(\mathcal{F}_{\psi}|_{W_\mathfrak{m}})_{k-q+r+1}(0,t)\right]dt. \\
&=\frac{(-1)^{k+r}4m^{-q+r}|m|^{k-2r}}{Dw\tau(\psi^{-1})\matrixxx{2k+2}{k+q-r+1}} \int_{0}^{\infty}t^{2k-q-r}(\mathcal{F}_{\psi}|_{W_\mathfrak{m}})_{k-q+r+1}(0,t)dt. 
\end{align*}
By Proposition \ref{T6.7} since $(\nu,f)=1$ we have $\mathcal{F}_{\psi}|_{W_\mathfrak{m}}=\psi_\mathfrak{f}(-\nu)^{-1}\psi_\infty(f)^2(\mathcal{F}|_{W_\mathfrak{n}})_{\psi^{-1}}$, also $\mathcal{F}$ is a newform, then $\mathcal{F}_{\psi}|_{W_\mathfrak{m}}=\epsilon(\mathfrak{n})\psi_\mathfrak{f}(-\nu)^{-1}\psi_\infty(f)^2\mathcal{F}_{\psi^{-1}}$ and we have
\begin{align*}
&\Lambda(\mathcal{F},\psi)=\frac{(-1)^{k+r}4m^{-q+r}|m|^{k-2r}\epsilon(\mathfrak{n})\psi_{\infty}(f)^2}{Dw\tau(\psi^{-1})\matrixxx{2k+2}{k+q-r+1}\psi_\mathfrak{f}(-\nu)}  \int_{0}^{\infty}t^{2k-q-r}\mathcal{F}_{\psi^{-1},k-q+r+1}(0,t)dt\\  
&=\frac{(-1)^{k+r}4m^{-q+r}|m|^{k-2r}\epsilon(\mathfrak{n})\psi_{\infty}(f)^2}{Dw\tau(\psi^{-1})\matrixxx{2k+2}{k+q-r+1}\psi_\mathfrak{f}(-\nu)}  \int_{0}^{\infty}t^{2k-q-r}\left[\sum_{b\in(\mathcal{O}_K/f)^\times}\psi_{\mathfrak{f}}^{-1}(b/f) \mathcal{F}_{k-q+r+1}(b/f,t)\right]dt\\
&=\frac{(-1)^{k+r}4m^{-q+r}|m|^{k-2r}\epsilon(\mathfrak{n})\psi_{\infty}(f)^2}{Dw\tau(\psi^{-1})\matrixxx{2k+2}{k+q-r+1}\psi_\mathfrak{f}(-\nu)}\sum_{b\in(\mathcal{O}_K/f)^\times}\psi_{\mathfrak{f}}^{-1}(b/f)\int_{0}^{\infty}t^{2k-q-r} \mathcal{F}_{k-q+r+1}(b/f,t)dt.
\end{align*}

Now note that the integral above is exactly the integral appearing on the coefficient $c_{k-q,k-r}(b/f)$, more explicitly
\begin{equation*}
c_{k-q,k-r}(b/f)=\frac{2(-1)^{r+1}}{\matrixxx{2k+2}{k-q+r+1}} \int_{0}^{\infty}t^{2k-q-r}\mathcal{F}_{k-q+r+1}(b/f,t)dt
\end{equation*}
and since ${\matrixxx{2k+2}{k-q+r+1}}={\matrixxx{2k+2}{k+q-r+1}}$, we have
\begin{equation*}
\Lambda(\mathcal{F},\psi)=\frac{(-1)^{k+1}2m^{-q+r}|m|^{k-2r}\epsilon(\mathfrak{n})\psi_{\infty}(f)^2}{Dw\tau(\psi^{-1})\psi_\mathfrak{f}(-\nu)}\sum_{b\in(\mathcal{O}_K/f)^\times}\psi_{\mathfrak{f}}^{-1}(b/f)c_{k-q,k-r}(b/f).
\end{equation*}
The Hecke character $\psi^{-1}|\cdot|_{\mathbb{A}_K}^{k}$ has conductor $\mathfrak{f}$ and infinity type $(k-q,k-r)$ and we know by Theorem \ref{T8.2} that 
\begin{align*}
\Lambda(\mathcal{F},\psi^{-1}|\cdot|_{\mathbb{A}_K}^{k})&= \frac{(-1)^{k+q+r}2}{Dw\tau((\psi^{-1}|\cdot|_{\mathbb{A}_K}^{k})^{-1})} \sum_{b\in(\mathcal{O}_K/f)^\times}(\psi^{-1}|\cdot|_{\mathbb{A}_K}^{k})_{\mathfrak{f}}(b/f)c_{k-q,k-r}(b/f)\\
&= \frac{(-1)^{k+q+r}2|f|^{2k}}{Dw\tau(\psi|\cdot|_{\mathbb{A}_K}^{-k})} \sum_{b\in(\mathcal{O}_K/f)^\times}\psi^{-1}_{\mathfrak{f}}(b/f)c_{k-q,k-r}(b/f).
\end{align*}
Finally we obtain 
\begin{align*}
\Lambda(\mathcal{F},\psi)&=\frac{(-1)^{k+1}2m^{-q+r}|m|^{k-2r}\epsilon(\mathfrak{n})\psi_{\infty}(f)^2}{Dw\tau(\psi^{-1})\psi_\mathfrak{f}(-\nu)}\left[\frac{(-1)^{k+q+r}2|f|^{2k}}{Dw\tau(\psi|\cdot|_{\mathbb{A}_K}^{-k})}\right]^{-1}\Lambda(\mathcal{F},\psi^{-1}|\cdot|_{\mathbb{A}_K}^{k})  \\
&=\frac{(-1)^{q+r+1}m^{-q+r}|m|^{k-2r}\epsilon(\mathfrak{n})\psi_{\infty}(f)^{2}\tau(\psi|\cdot|_{\mathbb{A}_K}^{-k})}{\tau(\psi^{-1})\psi_\mathfrak{f}(-\nu)|f|^{2k}}\Lambda(\mathcal{F},\psi^{-1}|\cdot|_{\mathbb{A}_K}^{k})\\
&=\frac{-\epsilon(\mathfrak{n})|\nu|^k\tau(\psi|\cdot|_{\mathbb{A}_K}^{-k})}{\psi_\mathfrak{f}(-\nu)\psi_\infty(-\nu)\tau(\psi^{-1})}\Lambda(\mathcal{F},\psi^{-1}|\cdot|_{\mathbb{A}_K}^{k}),  
\end{align*}
where we used that 
\begin{align*}
\frac{m^{-q+r}|m|^{k-2r}\psi_{\infty}(f)^2}{|f|^{2k}}&=\frac{|m|^k\psi_\infty^{-1}(m)\psi_{\infty}(f^2)}{|f|^{2k}}=\frac{|\nu f^2|^k\psi_\infty^{-1}(\nu f^2)\psi_{\infty}(f^2)}{|f|^{2k}}\\
&=|\nu|^k\psi_\infty^{-1}(\nu)
\end{align*}
and $(-1)^{q+r}\psi_\infty^{-1}(\nu)=\psi_\infty^{-1}(-\nu)$.
\end{proof}

\begin{remark}
The theorem above generalizes the functional equation obtained in \cite[Prop 2.1]{cremona1994} for the untwisted $L$-function of a cuspidal Bianchi modular form of weight $(0,0)$. Also note that Theorem \ref{T16.1} is not a new result, but rather a reformulation of a classical result in \cite{JL}.
\end{remark}

\subsection{$p$-stabilisations of Bianchi modular forms}\hfill\\

Let $\mathcal{F}$ be a Bianchi eigenform of level $\mathfrak{n}$, the construction of the $p$-adic $L$-function of $\mathcal{F}$ requires $\mathfrak{n}$ to be divisible by each prime $\pprime$ above $p$, if this is not the case, we can define a \textit{$\pprime$-stabilisation} of $\mathcal{F}$ which will be a Bianchi eigenform with level at $\pprime$. In this section we define such $\pprime$-stabilisation using the descent to $\mathcal{H}_3$ of the general definition of a $p$-stabilisation in the adelic setting.

Let $\Phi$ be an automorphic eigenform over $K$ of weight $(k,k)$, level $\Omega_0(\mathfrak{n})$ with $\mathfrak{p}\nmid\mathfrak{n}$ and central action $\varphi$ of infinity type $(-k,-k)$ and trivial conductor. 
The Hecke operator $T_\mathfrak{q}$ for $\mathfrak{q}\nmid\mathfrak{n}$ is defined in \cite[Chap.VI, (8)]{weil1971dirichlet} by
\begin{equation*}
\Phi|_{T_\mathfrak{q}}(g):=\sum_{u\;\textrm{mod}\;\mathfrak{q}}\Phi\left(g\bonitaa{\pi_{\mathfrak{q}}}{u}{0}{1}\right)+\Phi\left(g\bonitaa{1}{0}{0}{\pi_{\mathfrak{q}}}\right).   
\end{equation*}
When $\mathfrak{q}|\mathfrak{n}$ denote $T_\mathfrak{q}$ by $U_\mathfrak{q}$ and define
\begin{equation*}
\Phi|_{U_\mathfrak{q}}(g):=\sum_{u\;\textrm{mod}\;\mathfrak{q}}\Phi\left(g\bonitaa{\pi_{\mathfrak{q}}}{u}{0}{1}\right).   
\end{equation*}

Let $\lambda_\mathfrak{p}$ denote the $T_\mathfrak{p}$ eigenvalue of $\Phi$, and let $\alpha_{\mathfrak{p}}$ and $\beta_{\mathfrak{p}}$ denote the roots of the Hecke polynomial $X^2-\lambda_{\mathfrak{p}} X+N(\mathfrak{p})^{k+1}$. Define the $\mathfrak{p}$-stabilisations of $\Phi$ to be
\begin{equation*}
\Phi^{\alpha_{\mathfrak{p}}}(g):=\Phi(g)-\alpha_{\mathfrak{p}}^{-1}\Phi\left(g\bonitaa{1}{0}{0}{\pi_\mathfrak{p}}\right),\;\;\Phi^{\beta_{\mathfrak{p}}}(g):=\Phi(g)-\beta_{\mathfrak{p}}^{-1}\Phi\left(g\bonitaa{1}{0}{0}{\pi_\mathfrak{p}}\right).
\end{equation*}
Then $\Phi^{\alpha_{\mathfrak{p}}}$ and $\Phi^{\beta_{\mathfrak{p}}}$ are eigenforms of level $\Omega_0(\mathfrak{p}\mathfrak{n})$ and $U_{\mathfrak{p}}$-eigenvalues $\alpha_{\mathfrak{p}}$ and $\beta_{\mathfrak{p}}$, in fact, for example for $\Phi^{\alpha_{\mathfrak{p}}}$ we have
\begin{align*}
\Phi^{\alpha_{\mathfrak{p}}}|_{U_\pprime}(g)&=\Phi|_{U_\pprime}(g)-\alpha_{\mathfrak{p}}^{-1}\Phi|_{U_\pprime}\left(g\bonitaa{1}{0}{0}{\pi_\mathfrak{p}}\right)\\
&=\lambda_\pprime\Phi(g) - \Phi\left(g\bonitaa{1}{0}{0}{\pi_{\mathfrak{\pprime}}}\right)-\alpha_{\mathfrak{p}}^{-1}N(\pprime)^{k+1}\Phi(g)\\
&=(\lambda_\pprime-\beta_\pprime)\Phi(g) - \Phi\left(g\bonitaa{1}{0}{0}{\pi_{\mathfrak{\pprime}}}\right)\\
&=\alpha_\pprime\Phi(g) - \Phi\left(g\bonitaa{1}{0}{0}{\pi_{\mathfrak{\pprime}}}\right)=\alpha_{\mathfrak{p}}\Phi^{\alpha_{\mathfrak{p}}}(g),
\end{align*}
where in second equality the first term comes from the definition of $T_\pprime$ as follows
\begin{equation*}
\lambda_\pprime\Phi(g)=\Phi|_{T_\pprime}(g)=\Phi|_{U_\pprime}(g) + \Phi\left(g\bonitaa{1}{0}{0}{\pi_{\mathfrak{\pprime}}}\right)\;\; \mathrm{then} \;\; \Phi|_{U_\pprime}(g)=\lambda_\pprime\Phi(g) - \Phi\left(g\bonitaa{1}{0}{0}{\pi_{\mathfrak{\pprime}}}\right)
\end{equation*}
and the second term comes from 
\begin{align*}
\Phi|_{U_\pprime}\left(g\bonitaa{1}{0}{0}{\pi_{\mathfrak{\pprime}}}\right)&=\sum_{u\;\textrm{mod}\;\pprime}\Phi\left(g\bonitaa{\pi_{\mathfrak{\pprime}}}{u}{0}{1}\bonitaa{1}{0}{0}{\pi_{\mathfrak{\pprime}}}\right)=\sum_{u\;\textrm{mod}\;\pprime}\Phi\left(g\bonitaa{1}{u}{0}{1}\bonitaa{\pi_{\mathfrak{\pprime}}}{0}{0}{\pi_{\mathfrak{\pprime}}}\right)\\
&=\varphi_\pprime(\pi_\pprime)\sum_{u\;\textrm{mod}\;\pprime}\Phi\left(g\bonitaa{1}{u}{0}{1}\right)=N(\pprime)^k\sum_{u\;\textrm{mod}\;\pprime}\Phi(g)=N(\pprime)^{k+1}\Phi(g),
\end{align*}
where third equality uses property (ii) in Definition \ref{T1.4} with the character $\varphi_\mathfrak{p}$. In forth equality we use invariance of $\Phi$ by $\Omega_0(\mathfrak{n})$ and $\varphi_\infty(\pi_{\mathfrak{p}})\varphi_\mathfrak{p}(\pi_{\mathfrak{p}})=1$ then $\varphi_\mathfrak{p}(\pi_{\mathfrak{p}})=\varphi_\infty(\pi_{\mathfrak{p}})^{-1}=|\pi_{\mathfrak{p}}|^{2k}=N(\pprime)^k$, with $|\pi_{\mathfrak{p}}|$ the archimedean norm of $\pi_{\mathfrak{p}}$ as an element of $K$.\\\hfill

Now we will find the descent of the $\pprime$-stabilisation $\Phi^{\alpha_{\mathfrak{p}}}$ to $\mathcal{H}_3$:\\\hfill

Taking $g_{\infty}=\left(\bonitaa{t}{z}{0}{1},...\bonitaa{1}{0}{0}{1},...\right)\in\mathrm{GL}_2(\mathbb{A}_K)$ with $t\in\mathbb{R}_{>0}$ and $z\in\mathbb{C}$ we write
\begin{equation*}
\Phi^{\alpha_{\mathfrak{p}}}(g_\infty)=\Phi(g_\infty)-\alpha_{\mathfrak{p}}^{-1}\Phi\left(\bonitaa{t}{z}{0}{1},...,\bonitaa{1}{0}{0}{\pi_\mathfrak{p}}...\right).   
\end{equation*}
Note that
\begin{align*}
\Phi\left(\bonitaa{t}{z}{0}{1},...,\bonitaa{1}{0}{0}{\pi_\mathfrak{p}}...\right)&=\Phi\left(\bonitaa{\pi_\mathfrak{p}}{0}{0}{1}\bonitaa{t}{z}{0}{1},...,\bonitaa{\pi_\mathfrak{p}}{0}{0}{1},...,\bonitaa{\pi_\mathfrak{p}}{0}{0}{\pi_\mathfrak{p}},...\right)\\
&=\Phi\left(\bonitaa{\pi_\mathfrak{p}}{0}{0}{1}\bonitaa{t}{z}{0}{1},...,\bonitaa{1}{0}{0}{1},...,\bonitaa{\pi_\mathfrak{p}}{0}{0}{\pi_\mathfrak{p}},...\right)\\
&=\varphi_\mathfrak{p}(\pi_\mathfrak{p})\Phi\left(\bonitaa{\pi_\mathfrak{p}}{0}{0}{1}\bonitaa{t}{z}{0}{1},...,\bonitaa{1}{0}{0}{1}...,\bonitaa{1}{0}{0}{1},...\right)\\
&=|\pi_{\mathfrak{p}}|^{2k}F\left(\bonitaa{\pi_\mathfrak{p}}{0}{0}{1}\bonitaa{t}{z}{0}{1}\right)=t|\pi_{\mathfrak{p}}|^{2k}\mathcal{F}|_{\bonitaa{\pi_{\mathfrak{p}}}{0}{0}{1}}(z,t)
\end{align*}
Where first equality follows by the left $\mathrm{GL}_2(K)$ invariance of $\Phi$ (Property (i) in Definition \ref{T1.4}) with the matrix $\bonitaa{\pi_\mathfrak{p}}{0}{0}{1}$. In second equality we use right $\Omega_0(\mathfrak{n})$ invariance of $\Phi$ for $\left(\bonitaa{\pi_\mathfrak{p}}{0}{0}{1},...,\bonitaa{\pi_\mathfrak{p}}{0}{0}{1},..\bonitaa{1}{0}{0}{1},...\right)\in \Omega_0(\mathfrak{n})$ where the place $\mathfrak{p}$ has the identity matrix. Third equality uses property (ii) in Definition \ref{T1.4} with the character $\varphi_\mathfrak{p}$. In forth equality we use the definition of $F$ and that $\varphi_\mathfrak{p}(\pi_{\mathfrak{p}})=\varphi_\infty(\pi_{\mathfrak{p}})^{-1}=|\pi_{\mathfrak{p}}|^{2k}$, with $|\pi_{\mathfrak{p}}|$ the archimedean norm of $\pi_{\mathfrak{p}}$ as an element of $K$. Finally in the last equality we use 1) in remark \ref{rem2.6}. Then
\begin{equation*}
\alpha_{\mathfrak{p}}^{-1}\Phi\left(\bonitaa{t}{z}{0}{1},...,\bonitaa{1}{0}{0}{\pi_\mathfrak{p}}...\right)=t|\pi_{\mathfrak{p}}|^{2k}\alpha_{\mathfrak{p}}^{-1}\mathcal{F}|_{\bonitaa{\pi_{\mathfrak{p}}}{0}{0}{1}}(z,t)=t\frac{\beta_{\mathfrak{p}}}{|\pi_{\mathfrak{p}}|^{2}}\mathcal{F}|_{\bonitaa{\pi_{\mathfrak{p}}}{0}{0}{1}}(z,t).  
\end{equation*} \\ \hfill

Let $\mathcal{F}^{\alpha_{\mathfrak{p}}}$ denote the descent of $\Phi^{\alpha_{\mathfrak{p}}}$ to $\mathcal{H}_3$ then we have

\begin{align*}
t\mathcal{F}^{\alpha_{\mathfrak{p}}}(z,t)=F^{\alpha_{\mathfrak{p}}}\left(\bonitaa{t}{z}{0}{1}\right)=\Phi^{\alpha_{\mathfrak{p}}}(g_{\infty})=t\mathcal{F}(z,t)-t\frac{\beta_{\mathfrak{p}}}{|\pi_{\mathfrak{p}}|^{2}}\mathcal{F}|_{\bonitaa{\pi_{\mathfrak{p}}}{0}{0}{1}}(z,t),
\end{align*}
obtaining
\begin{equation*}
\mathcal{F}^{\alpha_{\mathfrak{p}}}(z,t)=\mathcal{F}(z,t)-\frac{\beta_{\mathfrak{p}}}{|\pi_{\mathfrak{p}}|^{2}}\mathcal{F}|_{\bonitaa{\pi_{\mathfrak{p}}}{0}{0}{1}}(z,t).
\end{equation*}

Then, considering the descent of the $\pprime$-stabilisation of an automorphic form we define the $\pprime$-stabilisations of a Bianchi eigenform $\mathcal{F}\in S_{(k,k)}(\Gamma_0(\mathfrak{n}))$ to be
\begin{equation*}
\mathcal{F}^{\alpha_{\pprime}}(z,t):= \mathcal{F}(z,t)-\beta_\pprime \mathcal{G}(z,t), \;\;\; \mathcal{F}^{\beta_{\pprime}}(z,t):= \mathcal{F}(z,t)- \alpha_{\pprime} \mathcal{G}(z, t)
\end{equation*}
where 
\begin{equation*}
\mathcal{G}(z,t)= |\pi_\pprime|^{-2}\mathcal{F}|_{\bonitaa{\pi_{\mathfrak{p}}}{0}{0}{1}}(z,t).  
\end{equation*}

The $p$-stabilisations $\mathcal{F}^{\alpha_{\mathfrak{p}}}$ and $\mathcal{F}^{\beta_{\mathfrak{p}}}$ are Bianchi eigenforms of level $\Gamma_0(\pprime\mathfrak{n})$ and 
$U_\pprime$ eigenvalues $\alpha_\pprime$ and $\beta_\pprime$.

\subsection{$L$-function of a $p$-stabilisation}\hfill\\

The $L$-function of a Bianchi modular form $\mathcal{F}$ of level coprime to a prime $\pprime$ and the $L$-function of a $\pprime$-stabilisation $\mathcal{F}^{\alpha_\pprime}$ are related, in fact, if we define for a Hecke character $\chi$ of conductor $\mathfrak{f}$ and $\epsilon\in\mathbb{C}^\times$ the factor
\begin{equation}
Z_\mathfrak{p}^{\epsilon}(\chi) := \begin{cases} 1-\epsilon^{-1}\chi(\mathfrak{p})^{-1} & : \mathfrak{p}\nmid\mathfrak{f}, \\ 1 & :\mbox{otherwise}, \end{cases}
\label{e3.2}
\end{equation}
we have the following:

\begin{lemma}
Let $\psi$ be a Hecke character with conductor $\mathfrak{f}$. We have for $\epsilon\in\{\alpha_\pprime,\beta_\pprime\}$
\begin{equation*}
\Lambda(\mathcal{F}^{\epsilon},\psi)=
Z_\pprime^{\epsilon}(\psi^{-1}|\cdot|_{\mathbb{A}_K}^{k})\Lambda(\mathcal{F},\psi),
\label{l11.1}
\end{equation*}
\end{lemma}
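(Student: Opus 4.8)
The plan is to reduce the claim to the Dirichlet series defining $L(\mathcal{F},\psi,s)$ and to compute the effect of the operator $\mathcal{G}$ on Fourier coefficients. Because $\mathcal{F}^{\lambda}$ and $\mathcal{F}$ have the same weight $(k,k)$, the Deligne $\Gamma$-factors entering $\Lambda(\cdot,\psi)$ depend only on the infinity type of $\psi$ and thus coincide on the two sides; so it is enough to establish the corresponding identity for the uncompleted value $L(\mathcal{F}^{\lambda},\psi)=L(\mathcal{F}^{\lambda},\psi,1)$, after which we reinstate the common $\Gamma$-factor.

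\textbf{Step 1 (the main obstacle): the coefficients of $\mathcal{G}$.} I would show the clean shift $c(\mathfrak{m},\mathcal{G})=c(\mathfrak{m}\pprime^{-1},\mathcal{F})$, vanishing unless $\pprime\mid\mathfrak{m}$. The matrix $\gamma=\smallmatrixx{\pi_\pprime}{0}{0}{1}$ acts on $\mathcal{H}_3$ by $(z,t)\mapsto(\pi_\pprime z,|\pi_\pprime|t)$, and since its lower-left entry is $0$ the automorphy factor $J\!\left(\gamma/\sqrt{\det\gamma};(z,t)\right)$ is the diagonal matrix $\mathrm{diag}(\pi_\pprime^{-1/2},\overline{\pi_\pprime}^{-1/2})$, whose image under $\rho_{2k+2}^{-1}$ simply rescales the monomial $X^{2k+2-n}Y^n$, exactly as computed in the proof of Lemma \ref{T1.19}. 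Substituting $(z,t)\mapsto(\pi_\pprime z,|\pi_\pprime|t)$ into the Fourier expansion (\ref{e1.6}) and re-indexing the frequency sum by $\beta=\alpha\pi_\pprime$ turns $c(\alpha\delta)$ into the shifted coefficient $c((\beta\delta)\pprime^{-1})$, and produces an explicit scalar gathered from $|\det\gamma|^{-k}$, the prefactor $t$, the rotation factors $(\alpha/i|\alpha|)^{k+1-n}$, and the $\rho^{-1}$-twist. Using $|\pi_\pprime|^{2}=N(\pprime)$, all powers of $\pi_\pprime$ and $\overline{\pi_\pprime}$ collapse to a single factor $N(\pprime)$, which is precisely undone by the normalisation $|\pi_\pprime|^{-2}=N(\pprime)^{-1}$ built into $\mathcal{G}$; this is where the seemingly ad hoc constant in the definition of $\mathcal{G}$ earns its keep.

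\textbf{Step 2: the Dirichlet series.} With the shift established, the substitution $\mathfrak{m}=\pprime\mathfrak{m}'$ together with the multiplicativity of $\psi$ and $N$ on ideals gives
\[
L(\mathcal{G},\psi,s)=\sum_{0\neq\mathfrak{m}}c(\mathfrak{m}\pprime^{-1})\psi(\mathfrak{m})N(\mathfrak{m})^{-s}=\psi(\pprime)N(\pprime)^{-s}L(\mathcal{F},\psi,s).
\]
This single formula already encodes both cases of (\ref{e3.2}): when $\pprime\mid\mathfrak{f}$ one has $\psi(\pprime)=0$, so $L(\mathcal{G},\psi,s)=0$.

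\textbf{Step 3: assembling the Euler factor.} By definition $\mathcal{F}^{\lambda}=\mathcal{F}-\mu\,\mathcal{G}$ with $\{\lambda,\mu\}=\{\alpha_\pprime,\beta_\pprime\}$, hence $L(\mathcal{F}^{\lambda},\psi,s)=(1-\mu\,\psi(\pprime)N(\pprime)^{-s})L(\mathcal{F},\psi,s)$. Specialising to $s=1$ and using that the adelic norm takes the value $|\pi_\pprime|_\pprime=N(\pprime)^{-1}$ on $\pprime$, so that $(\psi^{-1}|\cdot|_{\mathbb{A}_K}^{k})(\pprime)^{-1}=\psi(\pprime)N(\pprime)^{k}$, I would rewrite the Euler factor as $1-\mu N(\pprime)^{-1}\psi(\pprime)$. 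The final input is the Hecke relation $\alpha_\pprime\beta_\pprime=N(\pprime)^{k+1}$ (the constant term of $X^{2}-\lambda_\pprime X+N(\pprime)^{k+1}$), which gives $\mu N(\pprime)^{-1}=\lambda^{-1}N(\pprime)^{k}$ and therefore
\[
1-\mu\,\psi(\pprime)N(\pprime)^{-1}=1-\lambda^{-1}(\psi^{-1}|\cdot|_{\mathbb{A}_K}^{k})(\pprime)^{-1}=Z_\pprime^{\lambda}(\psi^{-1}|\cdot|_{\mathbb{A}_K}^{k}).
\]
Multiplying through by the common $\Gamma$-factor converts this into the asserted identity for $\Lambda$, uniformly in $\lambda\in\{\alpha_\pprime,\beta_\pprime\}$.
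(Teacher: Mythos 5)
Your proposal is correct and follows essentially the same route as the paper: compute the effect of $\mathcal{G}$ on Fourier coefficients (the shift $c(\mathfrak{m},\mathcal{G})=c(\mathfrak{m}\pprime^{-1},\mathcal{F})$, with the $|\pi_\pprime|^{-2}$ normalisation cancelling the surplus factor of $N(\pprime)$), pass to the Dirichlet series at $s=1$, and use $\alpha_\pprime\beta_\pprime=N(\pprime)^{k+1}$ together with $|x_\pprime|_{\mathbb{A}_K}=N(\pprime)^{-1}$ to identify the resulting Euler factor with $Z_\pprime^{\lambda}(\psi^{-1}|\cdot|_{\mathbb{A}_K}^{k})$. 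The only (cosmetic) differences are that you package the two cases $\pprime\nmid\mathfrak{f}$ and $\pprime\mid\mathfrak{f}$ uniformly via the convention $\psi(\pprime)=0$, and treat $\lambda\in\{\alpha_\pprime,\beta_\pprire\}$ simultaneously where the paper does $\alpha_\pprime$ and calls $\beta_\pprime$ analogous.
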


\begin{proof}
(i) We first compute $\mathcal{G}_n$: note that if $\gamma=\smallmatrixx{\pi_\pprime}{0}{0}{1}$ then $\gamma\cdot(z,t)=(\pi_\pprime z,|\pi_\pprime|t)$ and analogously to Lemma \ref{T1.19} we have
\begin{equation*}
\rho_{2k+2}^{-1}\left(J\left(\frac{\gamma}{\sqrt{det(\gamma)}} ;(0,t)\right)\right)\matrixxx{X}{Y}^{2k+2}=\matrixxx{\pi_\pprime^{1/2}X}{\overline{\pi_\pprime}^{1/2}Y}^{2k+2},\;\;\mathrm{then}   
\end{equation*}
\begin{equation*}
\mathcal{G}(z,t)\matrixxx{X}{Y}^{2k+2}=|\pi_\pprime|^{-2}\mathcal{F}|_{\bonitaa{\pi_{\mathfrak{p}}}{0}{0}{1}}(z,t)\matrixxx{X}{Y}^{2k+2}=|\pi_\pprime|^{-k-2}\mathcal{F}(\pi_\pprime z,|\pi_\pprime|t)\matrixxx{\pi_\pprime^{1/2}X}{\overline{\pi_\pprime}^{1/2}Y}^{2k+2}.  
\end{equation*}
Then computing the $n$-th component we have
\begin{equation}
\mathcal{G}_n(z,t)=|\pi_{\pprime}|^{-1}\mathcal{F}_n(\pi_\pprime z,|\pi_\pprime|t)\left(\frac{\pi_\pprime}{|\pi_\pprime|}\right)^{k+1-n}.
\label{e3.3}
\end{equation}
(ii) Now, we relate $\Lambda(\mathcal{G},\psi)$ and $\Lambda(\mathcal{F},\psi)$ when $\pprime\nmid \mathfrak{f}$ ($\Lambda(\mathcal{G},\psi)=0$ if $\pprime|\mathfrak{f}$ ): by Theorem \ref{T8.2} and (\ref{e3.3}) if $\psi$ has infinity type $0 \leqslant (q,r) \leqslant (k,k)$ we have
\begin{align*}
\Lambda(\mathcal{G},\psi)&=(*)\sum_{b\in(\mathcal{O}_K/f)^\times}\psi_{\mathfrak{f}}(b/f)\int_{0}^{\infty}t^{q+r} \mathcal{G}_{k+q-r+1}(b/f,t)dt\\
&=(*)\sum_{b\in(\mathcal{O}_K/f)^\times}\psi_{\mathfrak{f}}(b/f)\int_{0}^{\infty}t^{q+r}\left[ |\pi_{\pprime}|^{-1}\mathcal{F}_{k+q-r+1}(\pi_\pprime b/f,|\pi_\pprime|t)\left(\frac{\pi_\pprime}{|\pi_\pprime|}\right)^{-q+r} \right]dt
\end{align*}
where $(*)=\frac{(-1)^{q+1}4}{Dw\tau(\psi^{-1})\matrixxx{2k+2}{k+q-r+1}}$.\\ \hfill

Changing variable $t\rightarrow |\pi_\pprime|^{-1}t$ we obtain
\begin{align*}
\Lambda(\mathcal{G},\psi)&=\psi_\infty(\pi_\pprime)^{-1}N(\pprime)^{-1}(*)\sum_{b\in(\mathcal{O}_K/f)^\times}\psi_{\mathfrak{f}}(b/f)\int_{0}^{\infty}t^{q+r} \mathcal{F}_{k+q-r+1}(\pi_\pprime b/f,t)dt\\
&=\psi_\mathfrak{f}(\pi_\pprime)^{-1}\psi_\infty(\pi_\pprime)^{-1}N(\pprime)^{-1}(*)\sum_{b\in(\mathcal{O}_K/f)^\times}\psi_{\mathfrak{f}}(b/f)\int_{0}^{\infty}t^{q+r} \mathcal{F}_{k+q-r+1}(b/f,t)dt\\
&=\psi(\pprime)N(\pprime)^{-1}\Lambda(\mathcal{F},\psi),
\end{align*}
where in second equality we change $\pi_\pprime b\rightarrow b$ since $\pi_\pprime\nmid f$ and in last equality we use $\psi_\mathfrak{f}(\pi_\pprime)^{-1}\psi_\infty(\pi_\pprime)^{-1}=\psi_\pprime(\pi_\pprime)=\psi(\pprime)$.
Finally we obtain
\begin{equation*}
\Lambda(\mathcal{F}^{\alpha_{\pprime}},\psi)=\Lambda(\mathcal{F},\psi)-\frac{N(\pprime)^{k+1}}{\alpha_\pprime}\Lambda(\mathcal{G},\psi)=
\begin{cases*}
\left(1-\frac{\psi(\pprime)N(\pprime)^{k}}{\alpha_\pprime}\right)\Lambda(\mathcal{F},\psi) & if $\pprime\nmid\mathfrak{f}$ \\
\Lambda(\mathcal{F},\psi) & otherwise.
\end{cases*}
\end{equation*}
Noting that
\begin{equation*}
\psi(\pprime)N(\pprime)^{k}=(\psi^{-1}(\pprime)N(\pprime)^{-k})^{-1} =(\psi^{-1}(\pprime)|x_{\pprime}|_{\mathbb{A}_K}^{k})^{-1}.
\end{equation*}
Where $x_\pprime$ is the idele associated to $\pprime$.
We have 
\begin{equation*}
\Lambda(\mathcal{F}^{\alpha_{\pprime}},\psi)=
Z_\pprime^{\alpha_{\pprime}}(\psi^{-1}|\cdot|_{\mathbb{A}_K}^{k})\Lambda(\mathcal{F},\psi)    
\end{equation*}
obtaining the result for the $\Lambda$-function of $\mathcal{F}^{\alpha_{\pprime}}$; for $\mathcal{F}^{\beta_{\pprime}}$ is analogous.
\end{proof}

\begin{remark}
Depending of the behaviour of $p$, there exist four or two $p$-stabilisations $\mathcal{F}_p$. Note that the level of $\mathcal{F}_p$ is $\Gamma_0(\mathfrak{n}\pprime\overline{\pprime})$ if $p$ splits as $\pprime\overline{\pprime}$, and $\Gamma_0(\mathfrak{n}\pprime)$ if $p$ ramifies as $\pprime^2$ or remains inert as $\pprime$.
\label{R13.3}
\end{remark}

\section{Modular symbols}

In this section we introduce Bianchi modular symbols. These are algebraic analogues of Bianchi modular forms that are easier to study $p$-adically.

Let $\Delta_0:=\mathrm{Div}^0(\mathbb{P}^1(K))$ denote the space of ``paths between cusps'' in $\mathcal{H}_3$, and let $V$ be any right $\mathrm{SL}_2(K)$-module. For a subgroup $\Gamma\subset\mathrm{SL}_2(K)$, denote the space of \textit{$V$-valued modular symbols for $\Gamma$} to be the space
\begin{equation*}
\mathrm{Symb}_{\Gamma}(V):=\mathrm{Hom}_{\Gamma}(\Delta_0,V)   
\end{equation*}
of functions satisfying the $\Gamma$-invariance property that $(\phi|\gamma)(D):=(\phi|\gamma D)|\gamma=\phi(D)$
where $\Gamma$ acts on the cusps by $\smallmatrixx{a}{b}{c}{d}\cdot r=\frac{ar+b}{cr+d}$.

For a ring $R$ recall the definition of $V_k(R)$ on section $(2)$, then define $V_{k,k}(R):=V_k(R)\otimes_R V_k(R)$. 

Note that we can identify $V_{k,k}(\mathbb{C})$ with the space of polynomials that are homogeneous of degree $k$ in two variables $X,Y$ and homogeneous of degree $k$ in two further variables $\overline{X}, \overline{Y}$. 

This space has a natural left action of $\mathrm{GL}_2(\mathbb{C})^2$ induced by the action of $\mathrm{GL}_2(\mathbb{C})$ on each factor by 

\begin{equation*}
\gamma\cdot P\left[\matrixxx{X}{Y},\matrixxx{\overline{X}}{\overline{Y}}\right]= P\left[\matrixxx{dX+bY}{cX+aY},\matrixxx{\overline{d}\overline{X}+\overline{b}\overline{Y}}{\overline{c}\overline{X}+\overline{a}\overline{Y}}\right],\;\;\;\; \gamma=\matrixx{a}{b}{c}{d}.    
\end{equation*}

\begin{remark}
This induces a right action on the dual space $V_{k,k}^*(\mathbb{C}):=\mathrm{Hom}(V_{k,k}(\mathbb{C}),\mathbb{C})$, also note that, in particular we obtain an action of $\Gamma_0(\mathfrak{n})$ on $V_{k,k}^*(\mathbb{C})$. 
\end{remark}

\begin{definition}
The space of \textit{Bianchi modular symbols of parallel weight $(k,k)$ and level $\Gamma_0(\mathfrak{n})$} is defined to be the space
\begin{equation*}
\mathrm{Symb}_{\Gamma_0(\mathfrak{n})}(V_{k,k}^*(\mathbb{C})):=\mathrm{Hom}_{\Gamma_0(\mathfrak{n})}(\Delta_0,V_{k,k}^*(\mathbb{C})).
\end{equation*}
\end{definition}

We can also define Hecke operators on the space of modular symbols, as in Section \ref{section2.4}, where the Hecke operators allow us to endow the space of Bianchi modular forms with additional structure. 

Let $\mathfrak{q}\nmid\mathfrak{n}$ be a prime ideal of $\mathcal{O}_K$ generated by the fixed uniformiser $\pi_\mathfrak{q}$. Then for $\phi \in \mathrm{Symb}_{\Gamma_0(\mathfrak{n})}(V_{k,k}^*(\mathbb{C}))$ we define the Hecke operator 

\begin{equation*}
\phi\mapsto (\phi|_{T_{\mathfrak{q}}}):=\sum_{b\in(\mathcal{O}_K/\mathfrak{q})^\times} \phi|_{\bonitaa{1}{b}{0}{\pi_{\mathfrak{q}}}} +\phi|_{\bonitaa{\pi_{\mathfrak{q}}}{0}{0}{1}}.
\end{equation*}
When $\mathfrak{q}|\mathfrak{n}$ we denote $T_\mathfrak{q}$ by $U_\mathfrak{q}$ and 
\begin{equation*}
(\phi|_{U_{\mathfrak{q}}}):=\sum_{b\in(\mathcal{O}_K/\mathfrak{q})^\times} \phi|_{\bonitaa{1}{b}{0}{\pi_{\mathfrak{q}}}}.
\end{equation*}

We can define Hecke operators over Bianchi modular symbols for each ideal $I$ of $K$ in the analogous way as for Bianchi modular forms, then the Hecke algebra acts on $\mathrm{Symb}_{\Gamma_0(\mathfrak{n})}(V_{k,k}^*(\mathbb{C}))$ and by \cite[\S3, \S8]{hida1994critical} and \cite{harder1987eisenstein}, we have:

\begin{proposition}
There is a Hecke-equivariant injection 
\begin{equation*}
S_{(k,k)}(\Gamma_0(\mathfrak{n})) \hookrightarrow \mathrm{Symb}_{\Gamma_0(\mathfrak{n})}(V_{k,k}^*(\mathbb{C})), \;\; \mathcal{F} \mapsto \phi_{\mathcal{F}}.    
\end{equation*}
\end{proposition}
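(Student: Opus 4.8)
The plan is to realise the map $\mathcal{F}\mapsto\phi_{\mathcal{F}}$ as a Bianchi analogue of the Eichler--Shimura construction: to each cuspidal Bianchi form I attach a closed, $V_{k,k}^*(\mathbb{C})$-valued differential $1$-form $\omega_{\mathcal{F}}$ on $\mathcal{H}_3$, and then set
\begin{equation*}
\phi_{\mathcal{F}}(\{s\}-\{r\}):=\int_{r}^{s}\omega_{\mathcal{F}},
\end{equation*}
the integral being taken along any path joining the cusps $r,s\in\mathbb{P}^1(K)$. Extending $\mathbb{Z}$-linearly gives a map $\Delta_0\to V_{k,k}^*(\mathbb{C})$, and the content of the proposition is that this lands in $\mathrm{Symb}_{\Gamma_0(\mathfrak{n})}$, is Hecke-equivariant, and is injective. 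Throughout I follow the constructions of \cite{hida1994critical} (Sections 3 and 8), \cite{harder1987eisenstein} and \cite{ghate1999critical}.

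First I would build $\omega_{\mathcal{F}}$ explicitly from the components $\mathcal{F}_0,\dots,\mathcal{F}_{2k+2}$ of (\ref{e1.6}). The cotangent space of $\mathcal{H}_3=\mathrm{SL}_2(\mathbb{C})/\mathrm{SU}_2(\mathbb{C})$ carries the adjoint action of $\mathrm{SU}_2(\mathbb{C})$ on its $3$-dimensional real form, so contracting the $V_{2k+2}(\mathbb{C})$-valued function $\mathcal{F}$ against the invariant differentials in $dz,\,d\overline{z},\,dt/t$ by an explicit Clebsch--Gordan map produces a $1$-form valued in $V_{k,k}^*(\mathbb{C})$. The decisive point is that the harmonicity built into condition (iv) of Definition \ref{T1.4}, namely that $\mathcal{F}$ is a Casimir eigenfunction with the eigenvalue forced by the weight $(k,k)$, is exactly equivalent to $d\omega_{\mathcal{F}}=0$. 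Closedness makes the integral above independent of the chosen path, so $\phi_{\mathcal{F}}$ is well defined on $\Delta_0$.

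Next I would check the two equivariances. The $\Gamma_0(\mathfrak{n})$-invariance $(\phi_{\mathcal{F}}|\gamma)=\phi_{\mathcal{F}}$ is a direct translation of the automorphy relation (\ref{e2.4}): pulling back $\omega_{\mathcal{F}}$ along $\gamma\in\Gamma_0(\mathfrak{n})$ introduces precisely the factor $\rho_{2k+2}(J(\gamma;(z,t)))$, which under the Clebsch--Gordan identification becomes the right action of $\gamma$ on $V_{k,k}^*(\mathbb{C})$, while $\gamma$ simultaneously permutes the cusps by $\smallmatrixx{a}{b}{c}{d}\cdot r=\tfrac{ar+b}{cr+d}$; since $\varphi=|\cdot|_{\mathbb{A}_K}^{k}$ the central character contributes trivially. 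Hecke-equivariance is then formal: the operators on $S_{(k,k)}(\Gamma_0(\mathfrak{n}))$ and on $\mathrm{Symb}_{\Gamma_0(\mathfrak{n})}(V_{k,k}^*(\mathbb{C}))$ are defined by the \emph{same} coset representatives $\smallmatrixx{1}{b}{0}{\pi_{\mathfrak{q}}}$ and $\smallmatrixx{\pi_{\mathfrak{q}}}{0}{0}{1}$, so integration intertwines them term by term.

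Finally, for injectivity I would argue that $\mathcal{F}$ can be recovered from $\phi_{\mathcal{F}}$. Evaluating $\phi_{\mathcal{F}}$ on the paths from $0$ to the cusps $b/f$ produces exactly the period integrals $\int_0^{\infty}t^{\,q+r}\mathcal{F}_{k+q-r+1}(b/f,t)\,dt$ appearing in the coefficients $c_{q,r}(b/f)$ of Theorem \ref{T8.2}; as $\psi$ ranges over Hecke characters these integrals determine all twisted $L$-values, hence by Mellin inversion the Fourier coefficients $c(\alpha\delta)$ of (\ref{e1.6}), and a cuspidal Bianchi form is determined by its Fourier coefficients. Equivalently, the map identifies $S_{(k,k)}(\Gamma_0(\mathfrak{n}))$ with a Hodge-theoretic piece of $H^1_{\mathrm{cusp}}(\Gamma_0(\mathfrak{n}),V_{k,k}^*(\mathbb{C}))$, and a harmonic form representing the zero class must vanish. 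I expect the genuine technical work to be the closedness computation $d\omega_{\mathcal{F}}=0$ together with the boundary analysis at the cusps: one must use the rapid decay of the Bessel functions $K_{n-k-1}(4\pi|\alpha|t)$ in (\ref{e1.6}), guaranteed by cuspidality, to ensure the defining integrals converge and that $\phi_{\mathcal{F}}$ really takes values in $V_{k,k}^*(\mathbb{C})$ rather than merely in a completion.
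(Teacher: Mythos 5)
The paper itself offers no proof of this proposition: it is quoted from Sections 3 and 8 of \cite{hida1994critical} and from \cite{harder1987eisenstein}, and the construction in those references is exactly the one you describe --- the Clebsch--Gordan contraction of the $V_{2k+2}(\mathbb{C})$-valued function against the invariant differentials built from $dz$, $dt/t$, $d\overline{z}$ (using that $V_{2k+2}$ occurs with multiplicity one in $V_{k,k}\otimes \mathrm{Sym}^2$ as an $\mathrm{SU}_2(\mathbb{C})$-representation), closedness of $\omega_{\mathcal{F}}$ from the Casimir condition (iv) of Definition \ref{T1.4}, integration between cusps with convergence supplied by cuspidal rapid decay, and the two compatibility checks; this is also how \cite{ghate1999critical} and \cite{chris2017} set it up, and the explicit formula for $\phi_{\mathcal{F}}(\{a\}-\{\infty\})$ in terms of $c_{q,r}(a)$ quoted after the proposition is the output of that computation. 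One caveat on your ``formal'' Hecke step: the action (\ref{e2.5}) on forms carries determinant and $|\pi_{\mathfrak{q}}|^{2k}$ normalisations that the $V_{k,k}^*$-action on symbols does not, so the coset-by-coset matching requires these factors to be seen to cancel; this bookkeeping is done in \cite{chris2017} but is not content-free.

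The genuine gap is your principal injectivity argument. From $\phi_{\mathcal{F}}$ you know $c_{q,r}(a)$ only for the finitely many pairs $0\leqslant q,r\leqslant k$, i.e.\ each twisted $L$-function only at its finitely many critical points --- isolated values, not values along vertical lines --- so there is no ``Mellin inversion'' to perform. Nor can you recover the $c(\alpha\delta)$ by finite Fourier inversion over $b\in\mathcal{O}_K/(f)$ and letting $f\to\infty$: termwise integration writes $c_{q,r}(a)$ as $\sum_{\alpha}c(\alpha\delta)(\cdot)\,|\alpha|^{-(q+r+1)}e^{2\pi i\mathrm{Tr}_{K/\mathbb{Q}}(\alpha a)}$ with the factor $(\cdot)$ of modulus one, and the available coefficient bounds make this series absolutely convergent only when $q+r$ is large compared with $k$ --- never when $k$ is small (e.g.\ $k=0$) --- precisely because these critical values lie outside the half-plane of absolute convergence and are defined by analytic continuation (Remark \ref{T1.13}); recovering a form from critical twisted $L$-values is a deep nonvanishing-type theorem, not an inversion. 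The argument you offer as ``equivalent'' is in fact the proof, and it is not equivalent to the flawed one: under the identification of $\mathrm{Symb}_{\Gamma_0(\mathfrak{n})}(V_{k,k}^*(\mathbb{C}))$ with compactly supported cohomology $H^1_c$ of $\Gamma_0(\mathfrak{n})\backslash\mathcal{H}_3$, the image of $\phi_{\mathcal{F}}$ in $H^1$ is the class of $\omega_{\mathcal{F}}$, and the Eichler--Shimura--Harder theorem (\cite{harder1987eisenstein}, \cite{hida1994critical}) asserts that $\mathcal{F}\mapsto[\omega_{\mathcal{F}}]$ is injective onto cuspidal cohomology --- for instance because the cup product of this class with the companion class in $H^2$ computes a nonzero multiple of the Petersson norm, a point that genuinely uses harmonicity and decay since naive Hodge theory fails on the non-compact quotient. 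Promote that cohomological argument to your actual proof of injectivity and the proposal is complete.
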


Let $\mathcal{X}^{k-q}\mathcal{Y}^q\overline{\mathcal{X}}^{k-r}\overline{\mathcal{Y}}^r\in V_{k,k}^*(\mathbb{C})$ be the dual of $X^{k-q}Y^q\overline{X}^{k-r}\overline{Y}^r$. We can explicitly describe the modular symbol attached to $\mathcal{F}$ at generating divisors as
\begin{equation*}
\phi_\mathcal{F}(\{a\}-\{\infty\})= \sum_{q,r=0}^{k}c_{q,r}(a)(\mathcal{Y}-a\mathcal{X})^{k-q}\mathcal{X}^q(\overline{\mathcal{Y}}-\overline{a}\overline{\mathcal{X}})^{k-r}\overline{\mathcal{X}}^r, 
\label{p5.1}
\end{equation*}
for $a\in K$, where $c_{q,r}(a)$ is defined in Theorem \ref{T8.2}, (see \cite[Prop.2.9]{chris2017}).

\begin{remark} 
The coefficient $c_{q,r}(a)$ establishes a link between values of a modular symbol and critical $L$-values of the Bianchi modular form, this link is the key to the interpolation property satisfied by the $p$-adic $L$-function.
\end{remark}

\begin{proposition}
Let $\mathcal{F}$ be a cuspidal Bianchi modular form in $S_{(k,k)}(\Gamma_0(\mathfrak{n}))$, the symbol $\phi'_\mathcal{F}:=\phi_\mathcal{F}/\Omega_\mathcal{F}$, for $\Omega_\mathcal{F}$ as in (\ref{e7.1}), takes values in $V_{k,k}^*(E)$ for some number field $E$.
\label{P7.5}
\end{proposition}
\begin{proof}
See \cite[Prop. 2.12]{chris2017}.
\end{proof}
\section{Overconvergent modular symbols}

Part (ii) of Proposition \ref{P7.5} allows us to see the modular symbol $\phi_\mathcal{F}$ as having values in $V_{k,k}^*(L)$ for a sufficiently large $p$-adic field $L$. For suitable level groups, one can then replace this space of polynomials with a space of $p$-adic distributions and obtain the so called \textit{overconvergent modular symbols}.

\begin{definition} 
Let $\mathcal{A}(L)$ denote the space of locally analytic functions on $\tensorspace$ defined over $L$. We equip this space with a weight $(k,k)$-action of the semigroup
\begin{equation*}
\Sigma_0(p):=\left\{\bonitaa{a}{b}{c}{d}\in M_2(\tensorspace): p|c, a\in(\tensorspace)^\times, ad-bc\nequal0 \right\}    
\end{equation*}
by setting 
\begin{equation*}
\gamma \cdot \zeta(z)= (a+cz)^k \zeta\left(\frac{b+dz}{a+cz}\right).  
\end{equation*}

Denote $\mathcal{A}_{k,k}(L)$ the space $\mathcal{A}(L)$ equipped with the action above.
Let $\mathcal{D}_{k,k}(L):=\mathrm{Hom}_{\mathrm{cts}}(\mathcal{A}_{k,k}(L),L)$ denote the space of locally analytic distributions on $\tensorspace$ defined over $L$, equipped with a weight $(k,k)$ right action of $\Sigma_0(p)$ given by $\mu|\gamma(\zeta)=\mu(\gamma\cdot\zeta)$.
\label{D10.1}
\end{definition}

\begin{remark}
When $p$ ramifies as $\pprime^2$ in $K$, we can consider instead $\Sigma_0(p)$, the larger group coming from condition $c\in\pi_\pprime\tensorspace$ for $\pi_\pprime$ the fixed uniformiser at $\pprime$.
\end{remark}

For $\Gamma\subset\Sigma_0(p)$, define the space of \textit{overconvergent modular symbols of weight $(k,k)$ and level $\Gamma$} to be $\mathrm{Symb}_\Gamma(\mathcal{D}_{k,k}(L))$.

There is a natural map $\mathcal{D}_{k,k}(L)\rightarrow V_{k,k}^
*(L)$ given by dualising the inclusion of $V_{k,k}(L)$ into $\mathcal{A}_{k,k}(L)$. When $(p)|\mathfrak{n}$ for $p$ split in $K$ or when $\pprime|\mathfrak{n}$ for $p$ inert or ramified this map induces a \textit{specialisation map}
\begin{equation*}
\rho: \mathrm{Symb}_{\Gamma_0(\mathfrak{n})}(\mathcal{D}_{k,k}(L)) \longrightarrow \mathrm{Symb}_{\Gamma_0(\mathfrak{n})}(V_{k,k}^*(L)),   
\end{equation*}
noting that the source is well-defined since $\Gamma_0(\mathfrak{n})\subset\Sigma_0(p)$.

\begin{theorem}
(Control theorem, \cite[Cors. 5.9, 6.13]{chris2017}). For each prime $\pprime$ above $p$, let $\lambda_{\pprime}\in L^\times$. If $v(\lambda_{\pprime})<(k+1)/e_{\pprime}$ for all $\pprime|p$, then the restriction of the specialisation map 
\begin{equation*}
\rho: \mathrm{Symb}_{\Gamma_0(\mathfrak{n})}(\mathcal{D}_{k,k}(L))^{\{U_{\pprime}=\lambda_{\pprime}:\pprime|p\}} \xrightarrow{\sim} \mathrm{Symb}_{\Gamma_0(\mathfrak{n})}(V_{k,k}^*(L))^{\{U_{\pprime}=\lambda_{\pprime}:\pprime|p\}}    
\end{equation*}
to the simultaneous $\lambda_{\pprime}$-eigenspaces of the $U_{\pprime}$ operators is an isomorphism. Here recall that $e_{\pprime}$ is the ramification index of $\pprime|p$.
\end{theorem}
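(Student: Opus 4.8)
The plan is to derive the control theorem from a Hecke-equivariant short exact sequence of coefficient modules together with an explicit, prime-by-prime slope bound on its kernel. First I would observe that the specialisation map on coefficients, being dual to the inclusion $V_{k,k}(L)\hookrightarrow\mathcal{A}_{k,k}(L)$ of the finite-dimensional space of bidegree $\leqslant(k,k)$ polynomials, is surjective (any functional on the closed subspace $V_{k,k}(L)$ extends by Hahn--Banach). Writing $N_{k,k}(L)$ for its kernel --- the locally analytic distributions on $\tensorspace$ annihilating every polynomial of bidegree $\leqslant(k,k)$ --- I obtain a short exact sequence
\begin{equation*}
0\longrightarrow N_{k,k}(L)\longrightarrow \mathcal{D}_{k,k}(L)\xrightarrow{\ \rho\ } V_{k,k}^*(L)\longrightarrow 0
\end{equation*}
of right $\Sigma_0(p)$-modules. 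It is equivariant for all $U_\pprime$ and $T_\mathfrak{q}$ because the defining coset representatives lie in $\Sigma_0(p)$.

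Next I would apply $\mathrm{Symb}_{\Gamma_0(\mathfrak{n})}(-)=\mathrm{Hom}_{\Gamma_0(\mathfrak{n})}(\Delta_0,-)$. Since $\Delta_0=\mathrm{Div}^0(\mathbb{P}^1(K))$ is the degree-zero part of a permutation module on the cusps, this functor is part of a long exact sequence whose connecting maps land in the higher cohomology of $\Gamma_0(\mathfrak{n})$; the relevant segment is
\begin{equation*}
\cdots\to\mathrm{Symb}_{\Gamma_0(\mathfrak{n})}(N_{k,k}(L))\to\mathrm{Symb}_{\Gamma_0(\mathfrak{n})}(\mathcal{D}_{k,k}(L))\xrightarrow{\ \rho\ }\mathrm{Symb}_{\Gamma_0(\mathfrak{n})}(V_{k,k}^*(L))\to E\to\cdots,
\end{equation*}
where the kernel of $\rho$ is a quotient of $\mathrm{Symb}_{\Gamma_0(\mathfrak{n})}(N_{k,k}(L))$ and the cokernel $E$ injects into the next cohomology group with coefficients in $N_{k,k}(L)$. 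As every arrow is $U_\pprime$-equivariant, it suffices to show that, for each $\pprime\mid p$, the operator $U_\pprime$ acts with slopes $\geqslant(k+1)/e_\pprime$ on the relevant eigencomponent of these two error terms.

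The key point is that this slope bound is obtained one prime at a time, using the decomposition $\tensorspace=\prod_{\pprime\mid p}\mathcal{O}_\pprime$. The specialisation $\rho$ factors as a product of partial specialisations in the separate $\pprime$-variables, so $N_{k,k}(L)$ carries a finite filtration whose graded pieces each consist of distributions whose moments of degree $\leqslant k$ in one distinguished variable $z_\pprime$ vanish. On such a piece I would compute $U_\pprime$ directly: its coset representative $\smallmatrixx{1}{b}{0}{\pi_\pprime}$ acts on $\mathcal{A}_{k,k}(L)$ by $\zeta\mapsto\zeta(b+\pi_\pprime z)$, contracting only the $\pprime$-component of $z$ while acting invertibly on the others. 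Expanding $b+\pi_\pprime z_\pprime$ and discarding the annihilated low-order moments, every surviving term carries a factor $\pi_\pprime^{\,l}$ with $l\geqslant k+1$, whence $v(\mu|U_\pprime)\geqslant(k+1)\,v(\pi_\pprime)=(k+1)/e_\pprime$ on that graded piece. This is why the hypothesis is imposed at every prime above $p$: each graded piece of the error is killed only by the $U_\pprime$ of the prime contracting its distinguished variable.

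It then remains to conclude. Because $U_\pprime$ acts completely continuously on $\mathrm{Symb}_{\Gamma_0(\mathfrak{n})}(\mathcal{D}_{k,k}(L))$ and on the cohomology of $N_{k,k}(L)$, slope decompositions exist, and on the graded piece indexed by $\pprime$ the condition $v(\lambda_\pprime)<(k+1)/e_\pprime$ forces the generalised $\lambda_\pprime$-eigenspace to vanish. As the simultaneous eigenspace $\{U_\pprime=\lambda_\pprime:\pprime\mid p\}$ sits inside each single $U_\pprime$-eigenspace, every graded piece of both error terms dies and $\rho$ restricts to an isomorphism. I expect the main obstacle to be not the interior moment computation, which is essentially formal, but (i) verifying that $U_\pprime$ is genuinely compact so that the slope decomposition and the vanishing of the higher-cohomology cokernel $E$ are legitimate, and (ii) carrying the bookkeeping through the split, inert and ramified cases --- where the number of primes above $p$ and the value $v(\pi_\pprime)=1/e_\pprime$ change --- which is precisely the division of labour between corollaries 5.9 and 6.13 of \cite{chris2017}.
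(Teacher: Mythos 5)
The paper itself does not prove this theorem: it imports it wholesale from \cite{chris2017} (corollaries 5.9 and 6.13), so the relevant comparison is with the proof given there. Your outline follows essentially the same strategy as that proof (itself an adaptation of Pollack--Stevens and Greenberg to the Bianchi setting): the coefficient sequence $0\to N_{k,k}(L)\to\mathcal{D}_{k,k}(L)\to V_{k,k}^*(L)\to 0$, the filtration of $N_{k,k}(L)$ by kernels of partial specialisations taken one prime at a time, the moment computation showing that the representatives $\smallmatrixx{1}{b}{0}{\pi_\pprime}$ force every surviving term to carry a factor $\pi_\pprime^{\,l}$ with $l\geqslant k+1$, hence an operator-norm bound $|\pi_\pprime|^{k+1}$ and slope bound $(k+1)/e_\pprime$ on the graded piece attached to $\pprime$, and the passage through the long exact sequence of $\mathrm{Hom}_{\Gamma_0(\mathfrak{n})}(\Delta_0,-)$ to control both kernel and cokernel. (One small imprecision: by left exactness the kernel of $\rho$ on symbols \emph{equals} $\mathrm{Symb}_{\Gamma_0(\mathfrak{n})}(N_{k,k}(L))$, it is not merely a quotient of it; this is harmless.)

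The genuine gap is in your concluding step, and it is precisely the point you flagged as obstacle (i): when $p$ splits, the individual operator $U_\pprime$ is \emph{not} completely continuous on $\mathrm{Symb}_{\Gamma_0(\mathfrak{n})}(\mathcal{D}_{k,k}(L))$. Its representatives contract only the $\pprime$-coordinate of $\tensorspace\cong\mathcal{O}_\pprime\times\mathcal{O}_{\overline{\pprime}}$, while acting by an affine automorphism on the $\overline{\pprime}$-coordinate; an operator of the form (compact)$\,\widehat{\otimes}\,$(bounded invertible) on $\mathcal{D}_\pprime\widehat{\otimes}\mathcal{D}_{\overline{\pprime}}$ is never compact, for the same reason $T\otimes\mathrm{id}$ is not. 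Consequently slope decompositions with respect to $U_\pprime$ alone do not exist, and the sentence ``the generalised $\lambda_\pprime$-eigenspace of the error terms vanishes'' has no meaning as stated. There are two standard repairs, either of which completes your outline. (a) Run the slope-decomposition machinery with respect to the composite $U_p=\prod_{\pprime|p}U_\pprime$, which \emph{is} compact since it contracts every coordinate, and then diagonalise the commuting operators $U_\pprime$ on the resulting finite-dimensional slope pieces; this is what \cite{chris2017} does, and it is also why the simultaneous eigenspace appears in the statement. (b) Dispense with compactness: injectivity needs only the norm bound (a nonzero eigenvector lying in a $U_\pprime$-stable bounded lattice on which $U_\pprime$ has norm $\leqslant|\pi_\pprime^{k+1}|$ forces $v(\lambda_\pprime)\geqslant(k+1)/e_\pprime$), and an eigenclass in the cokernel/$\mathrm{Ext}^1$ term is killed by the geometric series $\sum_{i\geqslant0}\lambda_\pprime^{-i-1}U_\pprime^{i}$, which converges exactly because $|\pi_\pprime^{k+1}/\lambda_\pprime|<1$; this is Greenberg's successive-approximation argument. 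Without one of these patches your final paragraph does not go through.
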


\begin{definition}
If $\mathcal{F}\in S_{(k,k)}(\Gamma_0(\mathfrak{n}))$ is an eigenform with eigenvalues $\lambda_\pprime$ for $\pprime|p$, we say $\mathcal{F}$ has \textit{small slope} if $v(\lambda_{\pprime})<(k+1)/e_{\pprime}$ for all $\pprime|p$. We say $\mathcal{F}$ has \textit{critical slope} if it has no small slope.
\label{D10.4}
\end{definition}

Thus if $\mathcal{F}$ has small slope, using the above control theorem, we get an associated overconvergent modular symbol $\Psi_{\mathcal{F}}\in \mathrm{Symb}_{\Gamma_0(\mathfrak{n})}(\mathcal{D}_{k,k}(L))$ by lifting the corresponding classical modular symbol.

\section{$p$-adic $L$ function}

\subsection{Constructing the $p$-adic $L$-function}\label{constructing}\hfill\\

The $p$-adic $L$-function of a Bianchi modular form $\mathcal{F}$ is defined as the locally analytic distribution $L_p(\mathcal{F},-)$ on $\mathfrak{X}(\mathrm{Cl}_K(p^{\infty}))$, the two-dimensional rigid space of $p$-adic characters on $\mathrm{Cl}_K(p^{\infty}):=K^\times\backslash \mathbb{A}_K^\times/\mathbb{C}^\times\prod_{v\nmid p}\mathcal{O}_v^\times$, that interpolates the classical $L$-values of $\mathcal{F}$ and satisfy certain growth properties. We recall its key properties.

Recall that there is a bijection between algebraic Hecke characters of conductor dividing $p^\infty$ and locally algebraic characters of $\mathrm{Cl}_K(p^\infty)$ such that if $\psi$ corresponds to $\psi_{p-\mathrm{fin}}$, both are equal when we restrict to the adeles away from the infinite place and the primes above $p$. 

If $\psi$ is an algebraic Hecke character of $K$ of conductor $\mathfrak{f}|p^\infty$ and infinity type $(q,r)$, then, fixing an isomorphism $\mathbb{C}\cong\mathbb{C}_p$,  we associate to $\psi$ a $K^\times$-invariant function
\begin{equation*}
\psi_{p-\mathrm{fin}}(x):(\mathbb{A}_K^\times)_\mathfrak{f}\longrightarrow \mathbb{C}_p,\;\; \psi_{p-\mathrm{fin}}(x):=\psi_\mathfrak{f}(x)\sigma_p^{q,r}(x),
\end{equation*}
where
\begin{equation}
\sigma_p^{q,r}(x) := \begin{cases} x_\mathfrak{p}^q x_{\overline{\mathfrak{p}}}^r & : p \ \mbox{splits as} \ \mathfrak{p}\overline{\mathfrak{p}}, \\ 
x_\mathfrak{p}^q \overline{x_{\mathfrak{p}}^r} & :p \ \mbox{inert or ramified}. \end{cases}
\label{e12.1}
\end{equation}

\begin{remark}
For $\alpha \in K^\times$, we have $\psi_{p-\mathrm{fin}}(x_{\alpha,p})=(\psi_{p-\mathrm{fin}})_{(p)}(\alpha)=\psi_{(p)}(\alpha)\alpha^q\overline{\alpha}^r$ for $(x_{\alpha,p})_{\mathfrak{q}}=\alpha$ when $\mathfrak{q}|(p)$ and $(x_{\alpha,p})_\mathfrak{q}=1$ otherwise.
\label{e13.10}
\end{remark}

We now can construct the $p$-adic $L$-function of a small slope eigenform $\mathcal{F}\in S_{(k,k)}(\Gamma_0(\mathfrak{n}))$. First, associate to $\mathcal{F}$ a classical Bianchi eigensymbol $\phi_{\mathcal{F}}$ with coefficients in a $p$-adic field $L$, and lift it to its corresponding unique overconvergent Bianchi eigensymbol $\Psi$. Define the $p$-adic $L$-function of $\mathcal{F}$ as the locally analytic distribution $L_p(\mathcal{F},-)$ on $\mathfrak{X}(\mathrm{Cl}_K(p^{\infty}))$ by
\begin{equation*}
L_p(\mathcal{F},-):=\Psi(\{0\}-\{\infty\})|_{(\tensorspace)^\times}.   
\end{equation*}
Then, $L_p(\mathcal{F},-)$ satisfies the interpolation and admissibility properties desired (see \cite[Defs. 5.10, 6.14]{chris2017}) and we obtain the main theorem (Theorem 7.4) in \cite{chris2017} for class number $1$:
 
\begin{theorem}
Let $\mathcal{F}$ be a cuspidal Bianchi modular eigenform of weight $(k,k)$ and level $\Gamma_0(\mathfrak{n})$, where $(p)|\mathfrak{n}$, with $U_\mathfrak{p}$-eigenvalues $\lambda_\mathfrak{p}$, where $v(\lambda_\mathfrak{p}) < (k+1)/e_\mathfrak{p}$ for all $\mathfrak{p}|p$. Let $\Omega_\mathcal{F}$ be a complex period as in (\ref{e7.1}). Then there exists a locally analytic distribution $L_p(\mathcal{F},-)$ on $\mathfrak{X}(\mathrm{Cl}_K(p^{\infty}))$ such that for any Hecke character of $K$ of conductor $\mathfrak{f}|(p^\infty)$ and infinity type $0 \leqslant (q,r) \leqslant (k,k)$, we have 
\begin{equation}
L_p(\mathcal{F},\psi_{p-\mathrm{fin}})=\left( \prod_{\mathfrak{p}|p} Z_{\mathfrak{p}}^{\lambda_\pprime}(\psi) \right) \left[ \frac{Dw\tau(\psi^{-1})}{(-1)^{k+q+r}2\lambda_\mathfrak{f}\Omega_\mathcal{F}} \right] \Lambda(\mathcal{F},\psi),
\end{equation}
with $Z_\mathfrak{p}^{\lambda_{\pprime}}(\psi)$ as in (\ref{e3.2}).

The distribution $L_p(\mathcal{F},-)$ is $(h_\mathfrak{p})_{\mathfrak{p}|p}$-admissible, where $h_\mathfrak{p}=v_p(\lambda_\mathfrak{p})$, and hence is unique.
\label{T13.3}
\end{theorem}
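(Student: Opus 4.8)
The plan is to realise $L_p(\mathcal{F},-)$ as the restriction to units of a locally analytic distribution built from an overconvergent lift of the classical eigensymbol attached to $\mathcal{F}$, and then to read off both the interpolation formula and the admissibility bound from that construction. First I would invoke Proposition \ref{P7.5} to produce the period-normalised classical Bianchi eigensymbol $\phi_\mathcal{F}\in\mathrm{Symb}_{\Gamma_0(\mathfrak{n})}(V_{k,k}^*(L))$, which takes values in a $p$-adic field $L$ after dividing by $\Omega_\mathcal{F}$ as in (\ref{e7.1}). Because $\mathcal{F}$ has small slope, i.e. $v(\lambda_\pprime)<(k+1)/e_\pprime$ for every $\pprime|p$ (Definition \ref{D10.4}), the control theorem applies: the specialisation map $\rho$ restricts to an isomorphism on the simultaneous $\lambda_\pprime$-eigenspaces, so $\phi_\mathcal{F}$ lifts \emph{uniquely} to an overconvergent eigensymbol $\Psi\in\mathrm{Symb}_{\Gamma_0(\mathfrak{n})}(\mathcal{D}_{k,k}(L))$ with $\Psi|_{U_\pprime}=\lambda_\pprime\Psi$ and $\rho(\Psi)=\phi_\mathcal{F}$. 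I then set $L_p(\mathcal{F},-):=\Psi(\{0\}-\{\infty\})|_{(\tensorspace)^\times}$, a distribution on $(\tensorspace)^\times$ which I regard as living on $\mathfrak{X}(\mathrm{Cl}_K(p^\infty))$ via the natural map $(\tensorspace)^\times\to\mathrm{Cl}_K(p^\infty)$ and the $K^\times$-invariance built into the modular-symbol relation.

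The central step is the interpolation formula. Given a Hecke character $\psi$ of conductor $\mathfrak{f}|(p^\infty)$ and infinity type $0\leqslant(q,r)\leqslant(k,k)$, the value $L_p(\mathcal{F},\psi_{p-\mathrm{fin}})$ is by definition the integral of $\psi_{p-\mathrm{fin}}$ against $\Psi(\{0\}-\{\infty\})$. Since $\psi_{p-\mathrm{fin}}$ is locally algebraic, being the monomial $\sigma_p^{q,r}$ twisted by the finite character $\psi_\mathfrak{f}$, pairing against it factors through the specialisation $\rho(\Psi)=\phi_\mathcal{F}$ once one decomposes $(\tensorspace)^\times$ into residue classes modulo $\mathfrak{f}$. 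Concretely I would write the integral as a finite sum over $b\in(\mathcal{O}_K/f)^\times$, use the Hecke relation $\Psi|_{U_\pprime}=\lambda_\pprime\Psi$ to move the level structure at the primes dividing $p$ and generate the Euler-type factors $Z_\pprime^{\lambda_\pprime}(\psi)$ of (\ref{e3.2}) (trivial exactly when $\pprime|\mathfrak{f}$, as in Lemma \ref{l11.1}), and identify the resulting moments of $\rho(\Psi)$ with the coefficients $c_{q,r}(b/f)$ of (\ref{p5.1}). The sum $\sum_b\psi_\mathfrak{f}(b)c_{q,r}(b/f)$ is then precisely the quantity evaluated in Theorem \ref{T8.2}, which rewrites it as $\Lambda(\mathcal{F},\psi)$ against the explicit constant $\tfrac{Dw\tau(\psi^{-1})}{(-1)^{k+q+r}2\lambda_\mathfrak{f}\Omega_\mathcal{F}}$; assembling the pieces yields the stated formula, the $\lambda_\mathfrak{f}^{-1}$ and $\Omega_\mathcal{F}^{-1}$ arising from the normalisation of the $U_\pprime$-action used to extract moments and from passing to the period-normalised symbol.

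For admissibility I would bound the moments of $\Psi(\{0\}-\{\infty\})$. Exploiting the identity $\Psi|_{U_\pprime}=\lambda_\pprime\Psi$ and writing $U_\pprime$ through its defining sum, an induction on the number of applications of each $U_\pprime$ shows that the $j$-th moment over a disc of radius $p^{-m}$ grows no faster than $p^{m\,v_p(\lambda_\pprime)}$, giving the $(h_\pprime)_{\pprime|p}$-admissibility with $h_\pprime=v_p(\lambda_\pprime)$ required by Definition 6.14 of \cite{chris2017}. Uniqueness is then formal: an $(h_\pprime)$-admissible distribution is determined by its values on locally algebraic characters of bounded degree, and the characters $\psi_{p-\mathrm{fin}}$ with $0\leqslant(q,r)\leqslant(k,k)$ and $\mathfrak{f}|p^\infty$ are Zariski-dense among these, so the interpolation property pins $L_p(\mathcal{F},-)$ down completely.

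I expect the main obstacle to be the interpolation computation, and specifically keeping the bookkeeping exact: one must simultaneously track the adelic decomposition of $(\tensorspace)^\times$ into residue classes, the action of each $U_\pprime$ on the overconvergent symbol, and the Gauss-sum and period normalisations, so that the factors $Z_\pprime^{\lambda_\pprime}(\psi)$ and the leading constant emerge with the correct signs and powers. The admissibility estimate is more routine but still requires care in converting the slope hypothesis $v(\lambda_\pprime)<(k+1)/e_\pprime$ into an effective growth bound uniform in the moment index.
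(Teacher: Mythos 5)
Your proposal is correct and follows essentially the same route as the paper: the paper does not reprove this statement but rather defines $L_p(\mathcal{F},-):=\Psi(\{0\}-\{\infty\})|_{(\mathcal{O}_K\otimes_{\mathbb{Z}}\mathbb{Z}_p)^\times}$ via the control-theorem lift of $\phi_\mathcal{F}$ and then quotes it as Theorem 7.4 of \cite{chris2017} (specialised to class number $1$). Your sketch is a faithful reconstruction of that cited argument --- the same classical-to-overconvergent lift, the same evaluation at $\{0\}-\{\infty\}$ restricted to units, the same identification of moments with the coefficients $c_{q,r}(b/f)$ of Theorem \ref{T8.2} producing the Euler factors $Z_\pprime^{\lambda_\pprime}(\psi)$, and the same admissibility-plus-interpolation uniqueness.
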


\begin{remark}
1) There is a slight error in \cite{chris2017}, where the term $Z_\mathfrak{p}^{\lambda_\pprime}$ is incorrect in the case where $\pprime\nmid\mathfrak{f}$.

2) When $p$ ramifies as $\pprime^2$, in the Theorem above it suffices $\pprime|\mathfrak{n}$ instead $(p)|\mathfrak{n}$.
\end{remark}

\subsection{Functional equation of the $p$-adic $L$-function}

\subsubsection{The small slope case}\label{subsub}\hfill\\

In this section we obtain the functional equation of the $p$-adic $L$-function of a small slope $p$-stabilisation of a cuspidal Bianchi modular eigenform. 

Let $\mathcal{F}_p$ be a Bianchi modular form obtained by successively stabilising at each different prime $\pprime$ above $p$ a newform $\mathcal{F}\in S_{(k,k)}(\Gamma_0(\mathfrak{n}))$, with $\mathfrak{n}=(\nu)$ prime to $(p)$. Recall that $\mathcal{F}$ is an eigenform for the Fricke involution $W_\mathfrak{n}$, with $\mathcal{F}|_{W_\mathfrak{n}}=\epsilon(\mathfrak{n})\mathcal{F}$ with $\epsilon(\mathfrak{n})=\pm1$.

\begin{lemma}
For any Hecke character $\psi$ of conductor $\mathfrak{f}=(f)$ with $(f,\nu)=1$ and infinity type $0 \leqslant (q,r) \leqslant (k,k)$ we have 
\begin{equation*}
\left(\prod_{\pprime|p}Z_\pprime^{\alpha_{\pprime}}(\psi)\right)\Lambda(\mathcal{F}_p,\psi)=\varepsilon(\mathcal{F},\psi)\left(\prod_{\pprime|p}Z_\pprime^{\alpha_{\pprime}}(\psi^{-1}|\cdot|_{\mathbb{A}_K}^{k})\right)\Lambda(\mathcal{F}_p,\psi^{-1}|\cdot|_{\mathbb{A}_K}^{k}),
\end{equation*}
where $\varepsilon(\mathcal{F},\psi)=\left[\frac{-\epsilon(\mathfrak{n})|\nu|^k\tau(\psi|\cdot|_{\mathbb{A}_K}^{-k})}{\psi_\mathfrak{f}(-\nu)\psi_\infty(-\nu)\tau(\psi^{-1})}\right]$ and $\alpha_\pprime$ are the $U_\pprime$-eigenvalues of $\mathcal{F}_p$ for each $\pprime|p$. 
\label{L9.3}
\end{lemma}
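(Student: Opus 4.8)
The plan is to reduce the whole statement to the functional equation for the newform $\mathcal{F}$ itself (Theorem \ref{T16.1}), the decoration by the Euler-type factors $Z_\pprime^{\alpha_\pprime}$ being arranged precisely so that it appears symmetrically on the two sides and cancels.

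\emph{Step 1: a product formula for $\mathcal{F}_p$.} First I would establish
\[
\Lambda(\mathcal{F}_p,\psi)=\Big(\prod_{\pprime|p}Z_\pprime^{\alpha_\pprime}(\psi^{-1}|\cdot|_{\mathbb{A}_K}^{k})\Big)\Lambda(\mathcal{F},\psi),
\]
relating the completed $L$-function of $\mathcal{F}_p$ to that of the newform $\mathcal{F}$. This follows by iterating the lemma relating the $L$-function of a single $\pprime$-stabilisation to that of the original form, once for each prime $\pprime|p$. At the stage where one stabilises at $\pprime$, the form produced at the previous stages still has level coprime to $\pprime$ (the primes above $p$ being distinct and each stabilised exactly once), so that lemma applies, the relevant $U_\pprime$-eigenvalue is $\alpha_\pprime$, and the stabilisation contributes the factor $Z_\pprime^{\alpha_\pprime}(\psi^{-1}|\cdot|_{\mathbb{A}_K}^{k})$. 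Along the way one checks that stabilising at a later prime preserves the $U_\pprime$-eigenvalues already obtained, which is standard.

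\emph{Step 2: insert the complex functional equation and reassemble.} With the product formula in hand I would start from the left-hand side, substitute it, and then apply Theorem \ref{T16.1} in the form $\Lambda(\mathcal{F},\psi)=\varepsilon(\mathcal{F},\psi)\Lambda(\mathcal{F},\psi^{-1}|\cdot|_{\mathbb{A}_K}^{k})$. To recover the right-hand side I would apply the product formula a second time, now to the character $\psi^{-1}|\cdot|_{\mathbb{A}_K}^{k}$; the crucial observation is the character identity $(\psi^{-1}|\cdot|_{\mathbb{A}_K}^{k})^{-1}|\cdot|_{\mathbb{A}_K}^{k}=\psi$, so the factor that appears is $Z_\pprime^{\alpha_\pprime}(\psi)$, giving
\[
\Lambda(\mathcal{F}_p,\psi^{-1}|\cdot|_{\mathbb{A}_K}^{k})=\Big(\prod_{\pprime|p}Z_\pprime^{\alpha_\pprime}(\psi)\Big)\Lambda(\mathcal{F},\psi^{-1}|\cdot|_{\mathbb{A}_K}^{k}).
\]
Since the $Z$-factors are scalars, both sides of the claimed identity then equal
\[
\varepsilon(\mathcal{F},\psi)\Big(\prod_{\pprime|p}Z_\pprime^{\alpha_\pprime}(\psi)\Big)\Big(\prod_{\pprime|p}Z_\pprime^{\alpha_\pprime}(\psi^{-1}|\cdot|_{\mathbb{A}_K}^{k})\Big)\Lambda(\mathcal{F},\psi^{-1}|\cdot|_{\mathbb{A}_K}^{k}),
\]
and the two expressions coincide term by term, which is the asserted equality. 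Note also that $\psi^{-1}|\cdot|_{\mathbb{A}_K}^{k}$ has the same conductor $\mathfrak{f}$ coprime to $\nu$ and infinity type $(k-q,k-r)$ still lying between $0$ and $(k,k)$, so the hypotheses of both inputs are satisfied for it as well.

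\emph{Main obstacle.} I expect the only genuinely delicate point to be the bookkeeping in Step 1: verifying that the single-stabilisation lemma applies at each stage of the successive stabilisation and that the eigenvalue it produces is exactly the $\alpha_\pprime$ attached to $\mathcal{F}_p$, so that precisely the stated product of $Z$-factors is generated. Once this is secured, the remainder is a purely formal manipulation of commuting scalar factors, pivoting on the character identity above, and requires no analytic input beyond the complex functional equation of Theorem \ref{T16.1} already at our disposal.
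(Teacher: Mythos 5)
Your proposal is correct and follows essentially the same route as the paper: both arguments combine the single-stabilisation relation of Lemma \ref{l11.1} (applied to $\psi$ and, via the identity $(\psi^{-1}|\cdot|_{\mathbb{A}_K}^{k})^{-1}|\cdot|_{\mathbb{A}_K}^{k}=\psi$, to $\psi^{-1}|\cdot|_{\mathbb{A}_K}^{k}$) with the complex functional equation of Theorem \ref{T16.1}, iterating over the primes above $p$. The only cosmetic difference is ordering --- the paper interleaves the functional-equation step with each successive stabilisation, while you assemble the full product formula for $\mathcal{F}_p$ first and apply the functional equation once --- and your flagged bookkeeping point (that each stabilisation preserves level-coprimality and the eigenvalues at the remaining primes) is exactly what the paper's ``doing the same process above'' elides.
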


\begin{proof}
By Theorem \ref{T16.1} we have
\begin{equation}
\Lambda(\mathcal{F},\psi)=\varepsilon(\mathcal{F},\psi)\Lambda(\mathcal{F},\psi^{-1}|\cdot|_{\mathbb{A}_K}^{-k})  
\label{e14.2}
\end{equation}
Now let $\pprime$ be a prime over $p$, if we define $\mathcal{F}^{\alpha_{\pprime}}$ as a $\pprime-$stabilisation of $\mathcal{F}$, we obtain by Lemma \ref{l11.1} the following relations between the $\Lambda$-function of $\mathcal{F}^{\alpha_{\pprime}}$ and $\mathcal{F}$
\begin{equation}
\Lambda(\mathcal{F}^{\alpha_{\pprime}},\psi)=
Z_\pprime^{\alpha_{\pprime}}(\psi^{-1}|\cdot|_{\mathbb{A}_K}^{k})\Lambda(\mathcal{F},\psi),
\label{e14.3}
\end{equation}
\begin{equation}
\Lambda(\mathcal{F}^{\alpha_{\pprime}},\psi^{-1}|\cdot|_{\mathbb{A}_K}^{k})=Z_\pprime^{\alpha_{\pprime}}(\psi)\Lambda(\mathcal{F},\psi^{-1}|\cdot|_{\mathbb{A}_K}^{k}).
\label{e14.4}
\end{equation}
Putting (\ref{e14.3}), (\ref{e14.2}) and (\ref{e14.4}) together
\begin{align*}
Z_\pprime^{\alpha_{\pprime}}(\psi)\Lambda(\mathcal{F}^{\alpha_{\pprime}},\psi)&=Z_\pprime^{\alpha_{\pprime}}(\psi)Z_\pprime^{\alpha_{\pprime}}(\psi^{-1}|\cdot|_{\mathbb{A}_K}^{k})\Lambda(\mathcal{F},\psi)\\
&=Z_\pprime^{\alpha_{\pprime}}(\psi)Z_\pprime^{\alpha_{\pprime}}(\psi^{-1}|\cdot|_{\mathbb{A}_K}^{k})\varepsilon(\mathcal{F},\psi)\Lambda(\mathcal{F},\psi^{-1}|\cdot|_{\mathbb{A}_K}^{k})\\
&=\varepsilon(\mathcal{F},\psi)Z_\pprime^{\alpha_{\pprime}}(\psi^{-1}|\cdot|_{\mathbb{A}_K}^{k})\Lambda(\mathcal{F}^{\alpha_{\pprime}},\psi^{-1}|\cdot|_{\mathbb{A}_K}^{k}).
\end{align*}
Note that if $p$ is inert or ramified we are done and $\mathcal{F}_p=\mathcal{F}^{\alpha_{\pprime}}$. If $p$ split we have to do one more stabilisation, let $\overline{\pprime}$ be the other prime above $p$.

If we define $\mathcal{F}^{\alpha_{\pprime},\alpha_{\overline{\pprime}}}$ as the $\overline{\pprime}-$stabilisation of $\mathcal{F}^{\alpha_{\pprime}}$ and doing the same process above, we obtain
\begin{equation*}
\left(\prod_{\pprime|p}Z_\pprime^{\alpha_{\pprime}}(\psi)\right)\Lambda(\mathcal{F}^{\alpha_{\pprime},\alpha_{\overline{\pprime}}},\psi)=\varepsilon(\mathcal{F},\psi)\left(\prod_{\pprime|p}Z_\pprime^{\alpha_{\pprime}}(\psi^{-1}|\cdot|_{\mathbb{A}_K}^{k})\right)\Lambda(\mathcal{F}^{\alpha_{\pprime},\alpha_{\overline{\pprime}}},\psi^{-1}|\cdot|_{\mathbb{A}_K}^{k}).
\end{equation*}
Putting $\mathcal{F}_p=\mathcal{F}^{\alpha_{\pprime},\alpha_{\overline{\pprime}}}$ when $p$ split we obtain the result.
\end{proof}

\begin{proposition}
If $\mathcal{F}_p$ has small slope, then for any Hecke character $\psi$ of conductor $\mathfrak{f}|(p^\infty)$ with $\mathfrak{f}=(f)$ and infinity type $0 \leqslant (q,r) \leqslant (k,k)$, the  distribution $L_p(\mathcal{F}_p,-)$ satisfies
\begin{equation*}
L_p(\mathcal{F}_p,\psi_{p-\mathrm{fin}})=-\epsilon(\mathfrak{n})\mathrm{N}(\mathfrak{n})^{k/2}\psi^{-1}_{p-\mathrm{fin}}(x_{-\nu,p})L_p(\mathcal{F}_p,\psi_{p-\mathrm{fin}}^{-1}\sigma_p^{k,k}),
\end{equation*}
where $x_{-\nu,p}$ is the idele associated to $-\nu$ defined in Remark \ref{e13.10} and $\sigma_p^{k,k}$ as in equation (\ref{e12.1}).
\label{p14.1}
\end{proposition}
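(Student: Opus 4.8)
The plan is to evaluate both sides of the claimed identity through the interpolation formula of Theorem \ref{T13.3}, reducing the whole statement to the complex functional equation already packaged in Lemma \ref{L9.3}, and then to convert the archimedean prefactor of the complex equation into the $p$-adic evaluation appearing in the statement. The first thing I would record is that $\psi_{p-\mathrm{fin}}^{-1}\sigma_p^{k,k}$ is itself the $p$-finite character attached to the algebraic Hecke character $\chi:=\psi^{-1}|\cdot|_{\mathbb{A}_K}^{k}$. Indeed, from $\psi_{p-\mathrm{fin}}(x)=\psi_\mathfrak{f}(x)\sigma_p^{q,r}(x)$ one gets $\psi_{p-\mathrm{fin}}^{-1}\sigma_p^{k,k}=\psi_\mathfrak{f}^{-1}\sigma_p^{k-q,k-r}$, and since $\mathfrak{f}\mid(p^\infty)$ the adelic norm is trivial on the relevant local units, so $\chi_\mathfrak{f}=\psi_\mathfrak{f}^{-1}$ and $\chi$ has conductor $\mathfrak{f}$ and infinity type $(k-q,k-r)$, which still lies in the interpolation range $0\leqslant(k-q,k-r)\leqslant(k,k)$. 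In particular the two specialisations are governed by characters of the \emph{same} conductor $\mathfrak{f}$, so the eigenvalue factor $\lambda_\mathfrak{f}$ and the period $\Omega_{\mathcal{F}_p}$ are identical in the two instances of Theorem \ref{T13.3}, and the sign $(-1)^{k+q+r}$ is unchanged under $(q,r)\mapsto(k-q,k-r)$.

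Next I would write out Theorem \ref{T13.3} for $\mathcal{F}_p$ (which has small slope with $U_\pprime$-eigenvalues $\alpha_\pprime$) at $\psi$, namely
\begin{equation*}
L_p(\mathcal{F}_p,\psi_{p-\mathrm{fin}})=\frac{Dw\tau(\psi^{-1})}{(-1)^{k+q+r}2\lambda_\mathfrak{f}\Omega_{\mathcal{F}_p}}\Bigl(\prod_{\pprime|p}Z_\pprime^{\alpha_\pprime}(\psi)\Bigr)\Lambda(\mathcal{F}_p,\psi),
\end{equation*}
together with the analogous identity at $\chi$, where $\tau(\psi^{-1})$ is replaced by $\tau(\psi|\cdot|_{\mathbb{A}_K}^{-k})$. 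Feeding Lemma \ref{L9.3} into the first identity replaces $\bigl(\prod_{\pprime|p}Z_\pprime^{\alpha_\pprime}(\psi)\bigr)\Lambda(\mathcal{F}_p,\psi)$ by $\varepsilon(\mathcal{F},\psi)\bigl(\prod_{\pprime|p}Z_\pprime^{\alpha_\pprime}(\chi)\bigr)\Lambda(\mathcal{F}_p,\chi)$, and this product is precisely the quantity computed by Theorem \ref{T13.3} at $\chi$, which I solve for $L_p(\mathcal{F}_p,\chi_{p-\mathrm{fin}})=L_p(\mathcal{F}_p,\psi_{p-\mathrm{fin}}^{-1}\sigma_p^{k,k})$. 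After cancelling the common factors $Dw$, $(-1)^{k+q+r}$, $2\lambda_\mathfrak{f}\Omega_{\mathcal{F}_p}$ and the $Z$-product, what survives is
\begin{equation*}
L_p(\mathcal{F}_p,\psi_{p-\mathrm{fin}})=\varepsilon(\mathcal{F},\psi)\,\frac{\tau(\psi^{-1})}{\tau(\psi|\cdot|_{\mathbb{A}_K}^{-k})}\,L_p(\mathcal{F}_p,\psi_{p-\mathrm{fin}}^{-1}\sigma_p^{k,k}).
\end{equation*}

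Substituting the definition of $\varepsilon(\mathcal{F},\psi)$ from Lemma \ref{L9.3} cancels the two Gauss sums and leaves the prefactor $-\epsilon(\mathfrak{n})|\nu|^k/\bigl(\psi_\mathfrak{f}(-\nu)\psi_\infty(-\nu)\bigr)$. The final step is to match this with the stated prefactor: $|\nu|^2=\nu\overline{\nu}=\mathrm{N}(\mathfrak{n})$ gives $|\nu|^k=\mathrm{N}(\mathfrak{n})^{k/2}$, while Remark \ref{e13.10} together with the coprimality $(\nu,p)=1$ identifies $\psi_\mathfrak{f}(-\nu)^{-1}\psi_\infty(-\nu)^{-1}$ with $\psi_{p-\mathrm{fin}}^{-1}(x_{-\nu,p})$; the key points are that $\psi_{(p)}(-\nu)=\psi_\mathfrak{f}(-\nu)$ because the unramified primes above $p$ act trivially on the unit $-\nu$, and that the infinity-type exponents supply the remaining $(-\nu)^{-q}\overline{(-\nu)}^{-r}$.

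I expect the main obstacle to be precisely this last identification: one must pin down the inversion and sign conventions for $\psi_\infty$ and for the algebraic part $\sigma_p^{q,r}$ so that the archimedean factor $\psi_\infty(-\nu)$ of the complex functional equation converts correctly into the $p$-adic evaluation $\psi_{p-\mathrm{fin}}^{-1}(x_{-\nu,p})$, and to confirm that $\lambda_\mathfrak{f}$ depends only on the conductor $\mathfrak{f}$ (so that it is genuinely the same for $\psi$ and $\chi$ and cancels cleanly). The remaining work — reading off the two interpolation formulas and bookkeeping the Gauss-sum and sign cancellations — is routine once the character $\chi$ and the coincidence of conductors have been set up.
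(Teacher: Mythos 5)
Your proposal is correct and follows essentially the same route as the paper's own proof: both evaluate the interpolation formula of Theorem \ref{T13.3} at $\psi$ and at $\psi^{-1}|\cdot|_{\mathbb{A}_K}^{k}$, feed in Lemma \ref{L9.3} so that the $Z$-factors and Gauss sums cancel, and convert the surviving factor $\psi_\mathfrak{f}^{-1}(-\nu)\psi_\infty^{-1}(-\nu)$ into $\psi_{p-\mathrm{fin}}^{-1}(x_{-\nu,p})$ via Remark \ref{e13.10}, with $|\nu|^k=\mathrm{N}(\mathfrak{n})^{k/2}$. The only difference is organisational: you establish the identification $(\psi^{-1}|\cdot|_{\mathbb{A}_K}^{k})_{p-\mathrm{fin}}=\psi_{p-\mathrm{fin}}^{-1}\sigma_p^{k,k}$ at the outset (and helpfully note the invariance of the sign $(-1)^{k+q+r}$ and of $\lambda_\mathfrak{f}$ under $(q,r)\mapsto(k-q,k-r)$), whereas the paper records it at the end.
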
 

\begin{proof}

By Theorem \ref{T13.3} we have the following interpolations
\begin{equation}
L_p(\mathcal{F}_p,\psi_{p-\mathrm{fin}})=\left( \prod_{\mathfrak{p}|p} Z_\mathfrak{p}^{\alpha_{\pprime}}(\psi) \right) \left[ \frac{Dw\tau(\psi^{-1})}{(-1)^{k+q+r}2\lambda_\mathfrak{f}\Omega_\mathcal{F}} \right] \Lambda(\mathcal{F}_p,\psi).
\label{e6.6}
\end{equation}
\begin{equation}
L_p(\mathcal{F}_p,(\psi^{-1}|\cdot|_{\mathbb{A}_K}^{k})_{p-\mathrm{fin}})=\left( \prod_{\mathfrak{p}|p} Z_\mathfrak{p}^{\alpha_{\pprime}}(\psi^{-1}|\cdot|_{\mathbb{A}_K}^{k}) \right) \left[ \frac{Dw\tau(\psi|\cdot|_{\mathbb{A}_K}^{-k})}{(-1)^{k+q+r}2\lambda_\mathfrak{f}\Omega_\mathcal{F}} \right] \Lambda(\mathcal{F}_p,\psi^{-1}|\cdot|_{\mathbb{A}_K}^{k}).
\label{e8}
\end{equation}
By (\ref{e6.6}), Lemma \ref{L9.3} and (\ref{e8}) we have
\begin{align*}
L_p(\mathcal{F}_p,\psi_{p-\mathrm{fin}})&=\left( \prod_{\mathfrak{p}|p} Z_\mathfrak{p}^{\alpha_{\pprime}}(\psi) \right) \left[ \frac{Dw\tau(\psi^{-1})}{(-1)^{k+q+r}2\lambda_\mathfrak{f}\Omega_\mathcal{F}} \right] \Lambda(\mathcal{F}_p,\psi)\\  
&=\left[ \frac{Dw\tau(\psi^{-1})}{(-1)^{k+q+r}2\lambda_\mathfrak{f}\Omega_\mathcal{F}} \right]\varepsilon(\mathcal{F},\psi)\left(\prod_{\pprime|p}Z_\pprime^{\alpha_{\pprime}}(\psi^{-1}|\cdot|_{\mathbb{A}_K}^{k})\right)\Lambda(\mathcal{F}_p,\psi^{-1}|\cdot|_{\mathbb{A}_K}^{k})\\
&=\left[ \frac{Dw\tau(\psi^{-1})}{(-1)^{k+q+r}2\lambda_\mathfrak{f}\Omega_\mathcal{F}} \right]\varepsilon(\mathcal{F},\psi)\left[ \frac{Dw\tau(\psi|\cdot|_{\mathbb{A}_K}^{-k})}{(-1)^{k+q+r}2\lambda_\mathfrak{f}\Omega_\mathcal{F}} \right]^{-1}L_p(\mathcal{F}_p,(\psi^{-1}|\cdot|_{\mathbb{A}_K}^{k})_{p-\mathrm{fin}})\\
&=\varepsilon(\mathcal{F},\psi)\tau(\psi^{-1})\tau(\psi|\cdot|_{\mathbb{A}_K}^{-k})^{-1}L_p(\mathcal{F}_p,(\psi^{-1}|\cdot|_{\mathbb{A}_K}^{k})_{p-\mathrm{fin}})\\
&=-\epsilon(\mathfrak{n})|\nu|^k\psi_\mathfrak{f}^{-1}(-\nu)\psi_\infty^{-1}(-\nu)L_p(\mathcal{F}_p,(\psi^{-1}|\cdot|_{\mathbb{A}_K}^{k})_{p-\mathrm{fin}}).
\end{align*}
By Remark \ref{e13.10} we have 
\begin{align*}
\psi_\mathfrak{f}^{-1}(-\nu)\psi_\infty^{-1}(-\nu)&=\psi^{-1}_{(p)}(-\nu)(-\nu)^{-q}(-\overline{\nu})^{-r}\\
&=(\psi^{-1}_{p-\mathrm{fin}})_{(p)}(-\nu)\\
&=\psi^{-1}_{p-\mathrm{fin}}(x_{-\nu,p}),
\end{align*}
and noting that for a finite idele $x$ we have 
\begin{equation*}
(\psi^{-1}|\cdot|_{\mathbb{A}_K}^{k})_{p-\mathrm{fin}}(x)=\psi_{p-\mathrm{fin}}^{-1}(x)(|\cdot|_{\mathbb{A}_K}^{k})_{p-\mathrm{fin}}(x)=
\psi_{p-\mathrm{fin}}^{-1}(x)\sigma_p^{k,k}(x)=(\psi_{p-\mathrm{fin}}^{-1}\sigma_p^{k,k})(x),
\end{equation*}
we obtain the result.
\end{proof}

\begin{theorem}
For $\mathcal{F}_p$ as above with small slope, the distribution $L_p(\mathcal{F}_p,-)$ satisfies the following functional equation
\begin{equation*}
L_p(\mathcal{F}_p,\kappa)=-\epsilon(\mathfrak{n})\mathrm{N}(\mathfrak{n})^{k/2}\kappa(x_{-\nu,p})^{-1}L_p(\mathcal{F}_p,\kappa^{-1}\sigma_p^{k,k}),
\end{equation*}
for all $\kappa\in\mathfrak{X}(\mathrm{Cl}_K(p^{\infty}))$.
\label{T14.2}
\end{theorem}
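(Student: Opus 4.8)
The plan is to recognise that Proposition \ref{p14.1} is already the asserted functional equation, but evaluated only at the special characters $\kappa=\psi_{p-\mathrm{fin}}$ attached to Hecke characters $\psi$ of conductor dividing $p^\infty$ and infinity type $0\leqslant(q,r)\leqslant(k,k)$. Indeed, since $\mathrm{N}(\mathfrak{n})^{k/2}=|\nu|^k$ and $\psi_{p-\mathrm{fin}}^{-1}(x_{-\nu,p})=\psi_{p-\mathrm{fin}}(x_{-\nu,p})^{-1}$, substituting $\kappa=\psi_{p-\mathrm{fin}}$ into the statement of Theorem \ref{T14.2} reproduces verbatim the identity of Proposition \ref{p14.1}. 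The whole content of the theorem is therefore to upgrade this agreement at the locally algebraic characters $\psi_{p-\mathrm{fin}}$ to an equality of distributions on the entire rigid space $\mathfrak{X}(\mathrm{Cl}_K(p^{\infty}))$.

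To carry this out I would set
\[
\widetilde{L}_p(\mathcal{F}_p,\kappa):=-\epsilon(\mathfrak{n})\mathrm{N}(\mathfrak{n})^{k/2}\kappa(x_{-\nu,p})^{-1}L_p(\mathcal{F}_p,\kappa^{-1}\sigma_p^{k,k}),
\]
so that the claim becomes $L_p(\mathcal{F}_p,-)=\widetilde{L}_p(\mathcal{F}_p,-)$. The left-hand side is, by Theorem \ref{T13.3}, an $(h_\pprime)_{\pprime|p}$-admissible locally analytic distribution on $\mathfrak{X}(\mathrm{Cl}_K(p^{\infty}))$, where $h_\pprime=v_p(\lambda_\pprime)$. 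The key structural point to verify is that $\widetilde{L}_p(\mathcal{F}_p,-)$ is again an admissible distribution of the \emph{same} type. This I would check by noting that $\widetilde{L}_p$ is obtained from $L_p(\mathcal{F}_p,-)$ by three operations, each of which preserves admissibility of a fixed order: the inversion $\kappa\mapsto\kappa^{-1}$, which is an involutive automorphism of the character space fixing the conductor (hence the order) of a character; the twist $\kappa\mapsto\kappa\sigma_p^{k,k}$ by the fixed locally algebraic character $\sigma_p^{k,k}$, which is translation on $\mathfrak{X}(\mathrm{Cl}_K(p^{\infty}))$; and multiplication by $\kappa\mapsto\kappa(x_{-\nu,p})^{-1}$, which corresponds to translating the distribution by the fixed idele $x_{-\nu,p}$. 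None of these alters the growth bound defining $(h_\pprime)$-admissibility.

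Granting this, I would invoke the uniqueness result of \cite{loeffler2014p}: because $\mathcal{F}_p$ has small slope at every $\pprime|p$, an $(h_\pprime)$-admissible distribution on $\mathfrak{X}(\mathrm{Cl}_K(p^{\infty}))$ is uniquely determined by its values on the locally algebraic characters $\psi_{p-\mathrm{fin}}$. Since Proposition \ref{p14.1} shows $L_p(\mathcal{F}_p,\psi_{p-\mathrm{fin}})=\widetilde{L}_p(\mathcal{F}_p,\psi_{p-\mathrm{fin}})$ for all such $\psi$, and both distributions are admissible of the same order, they must coincide as distributions, which is exactly the claimed functional equation for all $\kappa\in\mathfrak{X}(\mathrm{Cl}_K(p^{\infty}))$. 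I expect the main obstacle to be precisely the admissibility bookkeeping for $\widetilde{L}_p$: one must confirm that the combined effect of inversion, the weight-shift by $\sigma_p^{k,k}$, and the translation by $x_{-\nu,p}$ leaves the admissibility order unchanged, so that the hypothesis of the Loeffler–Zerbes uniqueness theorem genuinely applies to $\widetilde{L}_p$ and not merely to $L_p$. Once this is in place the density/uniqueness conclusion is immediate.
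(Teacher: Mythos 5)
Your proposal is correct and is essentially the paper's own argument: the paper defines the difference distribution $L_p'(\mathcal{F}_p,\kappa):=L_p(\mathcal{F}_p,\kappa)+\epsilon(\mathfrak{n})\mathrm{N}(\mathfrak{n})^{k/2}\kappa(x_{-\nu,p})^{-1}L_p(\mathcal{F}_p,\kappa^{-1}\sigma_p^{k,k})$ (i.e.\ your $L_p-\widetilde{L}_p$), notes that it is $(h_\mathfrak{p})_{\mathfrak{p}|p}$-admissible, and concludes from the uniqueness theorem of \cite{loeffler2014p} together with the vanishing at all characters $\psi_{p-\mathrm{fin}}$ supplied by Proposition \ref{p14.1} that $L_p'(\mathcal{F}_p,-)=0$. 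The admissibility bookkeeping you flag as the main obstacle is handled in the paper with the same brevity (it is simply asserted that $L_p'$ inherits admissibility), so your route and the paper's coincide.
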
 

\begin{proof}
Define a new distribution $L_p'(\mathcal{F}_p,-)$ by 
\begin{equation*}
L_p'(\mathcal{F}_p,\kappa):= L_p(\mathcal{F}_p,\kappa)+\epsilon(\mathfrak{n})\mathrm{N}(\mathfrak{n})^{k/2}\kappa(x_{-\nu,p})^{-1}L_p(\mathcal{F}_p,\kappa^{-1}\sigma_p^{k,k})  
\end{equation*}
for any $\kappa\in\mathfrak{X}(\mathrm{Cl}_K(p^{\infty}))$.

Since $v_p(\alpha_\mathfrak{p}) < (k+1)/e_\pprime$ for all $\mathfrak{p}|p$, the distribution $L_p(\mathcal{F}_p,-)$ is $(h_\mathfrak{p})_{\mathfrak{p}|p}$-admissible, where $h_\mathfrak{p}=v_p(\alpha_\mathfrak{p})$. Then $L_p'(\mathcal{F}_p,-)$ is $(h_\mathfrak{p})_{\mathfrak{p}|p}$-admissible.

In \cite{loeffler2014p} it is proved that a distribution $(h_\mathfrak{p})_{\mathfrak{p}|p}$-admissible like $L_p'(\mathcal{F}_p,-)$ is uniquely determined by its values on the $p$-adic characters $\psi_{p-\mathrm{fin}}\in\mathfrak{X}(\mathrm{Cl}_K(p^{\infty}))$ that arise from Hecke characters $\psi$ of conductor $\mathfrak{f}|(p^\infty)$ and infinity type $0 \leqslant (q,r) \leqslant (k,k)$. By Proposition \ref{p14.1} we have that $L_p'(\mathcal{F}_p,\psi_{p-\mathrm{fin}})=0$ for all $\psi_{p-\mathrm{fin}}$, then $L_p'(\mathcal{F}_p,-)=0$ and the functional equation of $L_p(\mathcal{F}_p,-)$ follows.
\end{proof}

\begin{remark}
In the case when $p$ split, the property that $L_p'(\mathcal{F}_p,-)$ is uniquely determined by its values on the $p$-adic characters $\psi_{p-\mathrm{fin}}$; is proved
in Theorem 3.11 in \cite{loeffler2014p} in the case where $v_p(\alpha_{\pprime})<1$ for $\pprime$ and $\overline{\pprime}$, which he assumes merely for simplicity. 
For a more detailed example of the general situation in the one variable case, see \cite{colmez2010fonctions}.
\end{remark}

\textbf{Example:} Suppose $p$ splits in $K$ as $\pprime\overline{\pprime}$. Let $\mathcal{F}$ be a newform with weight $(k,k)$ and level $\mathfrak{n}$ prime to $p$ with $\lambda_\pprime=\lambda_{\overline{\pprime}}=0$. Then the Hecke polynomials at $\pprime$ and $\overline{\pprime}$ coincide, and their roots $\alpha$, $\beta$ both have $p$-adic valuation $(k+1)/2$. Assuming $\alpha\neq\beta$ there are four choices of stabilisations of level $(p)\mathfrak{n}$ and each is small slope, giving rise to four $p$-adic $L$-functions attached to $\mathcal{F}$, each one satisfying the corresponding $p$-adic functional equation of Theorem \ref{T14.2}.

\subsubsection{The critical slope case}\label{critical}\hfill\\

The construction of the $p$-adic $L$-function in \cite{chris2017} and consequently the functional equation in Theorem \ref{T14.2} depend of the small slope of the Bianchi modular form $\mathcal{F}$. In this section we generalize the functional equation of Theorem \ref{T14.2} for $\Sigma$-smooth base-change Bianchi modular forms, in particular, making no assumption about the slope. To this end we use the \textit{three-variable $p$-adic $L$-function} constructed in \cite{salazar2018} that specialises to $L_p(f_{/K},-)$, the $p$-adic $L$-function of a base-change Bianchi modular form $f_{/K}$.

We first recall briefly the definitions and construction of such $p$-adic $L$-function.

\begin{Conditions}
Let $N$ be divisible by $p$. Fix $f\in S_{k+2}(\Gamma_0(N))$ such that:
\begin{enumerate}
\item[(C1)] (finite slope eigenform) $f$ is an eigenform, and $U_pf=\lambda_pf$ with $\lambda_p\neq0$;
\item[(C2)] ($p$-stabilised newform) $f$ is new or the $p$-stabilisation of a newform $f_{\mathrm{new}}$ of level prime to $p$;
\item[(C3)] (regular) if $f$ is the $p$-stabilisation of $f_{\mathrm{new}}$, then the Hecke polynomial at $p$ of $f_{\mathrm{new}}$ has two different roots, and if $p$ is inert in $K$, $a_p(f_{\mathrm{new}})\neq0$; 
\item[(C4)] (non CM) $f$ does not have CM by $K$;
\item[(C5)] $f$ is decent (see \cite[Def. 5.5]{salazar2018});
\item[(C6)] $f_{/K}$ is $\Sigma$-smooth (see \cite[Def 5.12]{salazar2018}).
\end{enumerate}
\label{conditions}
\end{Conditions}

There exists a neighbourhood $V_\mathbb{Q}$ of $f$ in the Coleman-Mazur eigencurve such that the weight map $w$ is étale except possibly at $f$.

Up to shrinking $V_\mathbb{Q}$ there exists a unique \textit{rigid-analytic} function
\begin{equation*}
\mathcal{L}_p: V_{\mathbb{Q}}\times \mathfrak{X}(\mathrm{Cl}_K(p^{\infty})) \rightarrow L,    
\end{equation*}
for sufficiently large $L\subset\overline{\mathbb{Q}}_p$, such that for any classical point $y \in V_\mathbb{Q}(L)$ with small slope base-change $f_{y/K}$ we have $\mathcal{L}_p(y,-)=c_yL_p(f_{y/K},-)$, where $c_y\in L^\times$ is a $p$-adic period at $y$ and $L_p(f_{y/K},-)$ is the $p$-adic $L$-function of $f_{y/K}$ of Theorem \ref{T13.3}.

Note that in \cite{salazar2018}, $\mathcal{L}_p$ depends of $\phi$, a finite order Hecke character of $K$ of conductor prime to $p\mathcal{O}_K$, and is denoted by $\mathcal{L}_p^\phi$. Here we take $\phi$ to be trivial.

\begin{remark}
Shrinking $V_\mathbb{Q}$ we can suppose that there exists a Zariski-dense set $S\subset V_\mathbb{Q}$ of classical points such that for every $y\in S$ we have
\begin{itemize}
\item[(i)] $f_{y/K}$ is a successive $p$-stabilisation at each prime $\pprime$ above $p$ of a Bianchi newform of level $\Gamma_0(\mathfrak{n})$ where $\mathfrak{n}=(\nu)$ is the prime-to-$p$ part of the level of $f_{/K}$.
\item[(ii)] $f_{y/K}$ has small slope.
\item[(iii)] The weight $(k_y,k_y)$ of $f_{y/K}$ satisfies $k_y\equiv k \pmod{p-1}$.
\end{itemize}
Where condition (iii) comes since we are working in one of the $(p-1)$ discs in the weight space.
\label{R14.2}
\end{remark}

For purposes of $p$-adic variation of the weight we have to give meaning to $p$-adic exponents.

\begin{definition}
Let $\mathfrak{p}|p$ and $s\in\mathcal{O}_\mathfrak{p}$, define the function $\langle \cdot \rangle^s:=\exp(s\cdot\log_p(\langle \cdot \rangle))$ in $\mathcal{O}_\pprime^\times$, where $\log_p$ denotes the $p$-adic logarithm and $\langle\cdot\rangle$ is the projection of $\mathcal{O}_\mathfrak{p}^\times$ to $1+\mathfrak{p}^{r_\mathfrak{p}}\mathcal{O}_\mathfrak{p}$ for $r_\mathfrak{p}$ the smallest positive integer such that the usual $p$-adic exponential map converges on $\mathfrak{p}^{r_\mathfrak{p}}\mathcal{O}_\mathfrak{p}$. Define by $\langle\cdot\rangle^\textbf{s}= \prod_{\pprime|p} \langle\cdot\rangle^{s_\pprime}$ with $\textbf{s}=(s_\pprime)_{\pprime|p}\in \tensorspace \cong \prod_{\pprime|p}\mathcal{O}_\pprime$ the corresponding function in $(\tensorspace)^\times$. Let $w_{\mathrm{Tm},\pprime}: \mathcal{O}_\pprime^\times \rightarrow (\mathcal{O}_\pprime/\pprime^{r_\pprime})^\times \subset \mathcal{O}_\pprime^\times$ denote the Teichmüller character at $\pprime$, so that for $z \in \mathcal{O}_\pprime^\times$, we have $z= w_{\mathrm{Tm},\pprime}(z)\langle z \rangle$. Also let $w_\mathrm{Tm}:= \prod_{\pprime|p} w_{\mathrm{Tm},\pprime}$ be the corresponding character of $(\tensorspace)^\times$.
\label{D14.2}
\end{definition}

Recall the definition of $\sigma_p^{k,k}(x)$ in equation (\ref{e12.1}) and note that, for example, for $x\in (\mathbb{A}_K^\times)_f$ we have $\sigma_p^{k,k}(x)=[\langle x_p \rangle w_{\mathrm{Tm}}(x_p)]^k$, where $x_p=(x_\pprime)_{\pprime|p}$.
\begin{theorem}
Let $V_\mathbb{Q}$ as in Remark \ref{R14.2}, then for every $y\in V_\mathbb{Q}$ and $\kappa\in\mathfrak{X}(\mathrm{Cl}_K(p^{\infty}))$ we have
\begin{equation*}
\mathcal{L}_p(y,\kappa)=-\epsilon(\mathfrak{n})w_{\mathrm{Tm}}(\mathrm{N}(\mathfrak{n}))^{k/2}  \langle \mathrm{N}(\mathfrak{n}) \rangle^{k_y/2}\kappa(x_{-\nu,p})^{-1}\mathcal{L}_p(y,\kappa^{-1}w_{\mathrm{Tm}}^{k}\langle \cdot \rangle^{k_y}),
\end{equation*}
where $\epsilon(\mathfrak{n})=\pm1$ is the eigenvalue of $f_{y/K}$ for the Fricke involution $W_\mathfrak{n}$, $x_{-\nu,p}$ is the idele associated to $-\nu$ defined in Remark (\ref{e13.10}). 
\label{T15.2}
\end{theorem}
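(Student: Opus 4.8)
The plan is to prove the identity first on the Zariski-dense set $S$ of Remark \ref{R14.2}, where it reduces to the small-slope functional equation already established in Theorem \ref{T14.2}, and then to propagate it to all of $V_\mathbb{Q}$ by rigid-analytic continuation. First I would fix $\kappa\in\mathfrak{X}(\mathrm{Cl}_K(p^{\infty}))$ and take $y\in S$. By construction $f_{y/K}^p$ is a small-slope $p$-stabilisation of the newform $f_{y/K}$ of weight $(w(y),w(y))$ and level $\Gamma_0(\mathfrak{n})$, so $\mathcal{L}_p(y,-)=c_yL_p(f_{y/K}^p,-)$ for a period $c_y\in L^\times$. Applying Theorem \ref{T14.2} to $L_p(f_{y/K}^p,-)$ (with the weight parameter $w(y)$ in place of $k$) and multiplying through by $c_y$, while using $c_yL_p(f_{y/K}^p,\kappa^{-1}\sigma_p^{w(y),w(y)})=\mathcal{L}_p(y,\kappa^{-1}\sigma_p^{w(y),w(y)})$, yields
\[
\mathcal{L}_p(y,\kappa)=-\epsilon(\mathfrak{n})\mathrm{N}(\mathfrak{n})^{w(y)/2}\kappa(x_{-\nu,p})^{-1}\mathcal{L}_p(y,\kappa^{-1}\sigma_p^{w(y),w(y)}).
\]
Here $\mathfrak{n}=(\nu)$, and hence $\mathrm{N}(\mathfrak{n})$ and $x_{-\nu,p}$, are constant along $S$ since the tame level is fixed in the family, and the Fricke sign $\epsilon(\mathfrak{n})$, being $\pm1$-valued and locally constant, is constant on $V_\mathbb{Q}$.

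Next I would rewrite the right-hand side into the shape of the theorem using the Teichmüller decomposition of Definition \ref{D14.2} together with condition (b) of Remark \ref{R14.2}. Writing each factor through $z=w_{\mathrm{Tm}}(z)\langle z\rangle$, the character from equation (\ref{e12.1}) decomposes as $\sigma_p^{w(y),w(y)}=w_{\mathrm{Tm}}^{w(y)}\langle\cdot\rangle^{w(y)}$, and likewise $\mathrm{N}(\mathfrak{n})^{w(y)/2}=w_{\mathrm{Tm}}(\mathrm{N}(\mathfrak{n}))^{w(y)/2}\langle\mathrm{N}(\mathfrak{n})\rangle^{w(y)/2}$. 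The crucial point is that $V_\mathbb{Q}$ lies in a single residue disc of weight space, so condition (b) gives $w(y)\equiv k\pmod{p-1}$, which forces the finite-order Teichmüller parts to be constant across the family: $w_{\mathrm{Tm}}^{w(y)}=w_{\mathrm{Tm}}^{k}$ and $w_{\mathrm{Tm}}(\mathrm{N}(\mathfrak{n}))^{w(y)/2}=w_{\mathrm{Tm}}(\mathrm{N}(\mathfrak{n}))^{k/2}$ (fixing once and for all a branch for the half-power of the relevant root of unity). Substituting these, the displayed identity becomes exactly the functional equation asserted in the theorem, now verified for every $y\in S$.

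Finally I would argue by rigid-analytic continuation. Define on $V_\mathbb{Q}$ the function
\[
H(y):=\mathcal{L}_p(y,\kappa)+\epsilon(\mathfrak{n})w_{\mathrm{Tm}}(\mathrm{N}(\mathfrak{n}))^{k/2}\langle\mathrm{N}(\mathfrak{n})\rangle^{w(y)/2}\kappa(x_{-\nu,p})^{-1}\mathcal{L}_p(y,\kappa^{-1}w_{\mathrm{Tm}}^{k}\langle\cdot\rangle^{w(y)}).
\]
The term $\mathcal{L}_p(y,\kappa)$ is rigid-analytic in $y$ since $\mathcal{L}_p$ is. In the second term $\langle\mathrm{N}(\mathfrak{n})\rangle^{w(y)/2}=\exp(\tfrac{w(y)}{2}\log_p\langle\mathrm{N}(\mathfrak{n})\rangle)$ is rigid-analytic in $y$ through the weight map, and the assignment $y\mapsto\kappa^{-1}w_{\mathrm{Tm}}^{k}\langle\cdot\rangle^{w(y)}$ is a rigid-analytic family of characters in $\mathfrak{X}(\mathrm{Cl}_K(p^{\infty}))$ — this is precisely the purpose of the $p$-adic exponent $\langle\cdot\rangle^{w(y)}$ of Definition \ref{D14.2} — so that $y\mapsto\mathcal{L}_p(y,\kappa^{-1}w_{\mathrm{Tm}}^{k}\langle\cdot\rangle^{w(y)})$ is rigid-analytic by the two-variable rigid-analyticity of $\mathcal{L}_p$. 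Thus $H$ is rigid-analytic on $V_\mathbb{Q}$ and vanishes on the Zariski-dense set $S$ by the previous steps, so by the identity principle $H\equiv0$, which is the claimed functional equation for all $y\in V_\mathbb{Q}$.

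I expect the main obstacle to be the verification in the last paragraph that the twisted specialisation $y\mapsto\mathcal{L}_p(y,\kappa^{-1}w_{\mathrm{Tm}}^{k}\langle\cdot\rangle^{w(y)})$ is genuinely rigid-analytic in $y$. This rests on separating the variable ``wild'' part $\langle\cdot\rangle^{w(y)}$, which varies analytically with the weight, from the finite-order part, which condition (b) has rendered into the constant $w_{\mathrm{Tm}}^{k}$; without that rewriting the twisting character would jump discontinuously with $y$ and no analytic continuation off $S$ would be possible.
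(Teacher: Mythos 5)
Your proposal is correct and follows essentially the same route as the paper's own proof: specialise at the Zariski-dense set $S$ of Remark \ref{R14.2} via $\mathcal{L}_p(y,-)=c_yL_p(f_{y/K},-)$ and Theorem \ref{T14.2}, rewrite $\mathrm{N}(\mathfrak{n})^{w(y)/2}$ and $\sigma_p^{w(y),w(y)}$ through the Teichm\"uller decomposition $z=w_{\mathrm{Tm}}(z)\langle z\rangle$ using $w(y)\equiv k\pmod{p-1}$, and conclude by Zariski-density. Your final paragraph merely makes explicit the rigid-analyticity of $y\mapsto\mathcal{L}_p\bigl(y,\kappa^{-1}w_{\mathrm{Tm}}^{k}\langle\cdot\rangle^{w(y)}\bigr)$ and the constancy of $\epsilon(\mathfrak{n})$, points the paper leaves implicit in its appeal to density.
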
 
\begin{proof}
Consider $S\subset V_\mathbb{Q}$ as in Remark \ref{R14.2} and note that for $y\in S$, by Theorem \ref{T14.2}, the distribution $L_p(f_{y/K},-)$ satisfies for all $\kappa\in\mathfrak{X}(\mathrm{Cl}_K(p^{\infty}))$ the following functional equation
\begin{equation*}
L_p(f_{y/K},\kappa)=-\epsilon(\mathfrak{n})\mathrm{N}(\mathfrak{n})^{k_{y}/2}\kappa(x_{-\nu,p})^{-1}L_p(f_{y/K},\kappa^{-1}\sigma_p^{k_{y},k_{y}}),
\end{equation*}
multiplying both sides by the $p$-adic period $c_y$, we have
\begin{equation*}
\mathcal{L}_p(y,\kappa)=-\epsilon(\mathfrak{n})\mathrm{N}(\mathfrak{n})^{k_y/2}\kappa(x_{-\nu,p})^{-1}\mathcal{L}_p(y,\kappa^{-1}\sigma_p^{k_y,k_y}).
\end{equation*}
Note that $N(\mathfrak{n})=w_{\mathrm{Tm}}(\mathrm{N}(\mathfrak{n}))\langle \mathrm{N}(\mathfrak{n}) \rangle$ where each factor is well defined because $\pprime\nmid\mathfrak{n}$ for all $\pprime|p$, also, since $k_y\equiv k \pmod{p-1}$ we have $w_{\mathrm{Tm}}(\mathrm{N}(\mathfrak{n}))^{k_y/2}=w_{\mathrm{Tm}}(\mathrm{N}(\mathfrak{n}))^{k/2}$, then for all $y\in S$
\begin{equation*}
\mathcal{L}_p(y,\kappa)=-\epsilon(\mathfrak{n})w_{\mathrm{Tm}}(\mathrm{N}(\mathfrak{n}))^{k/2}  \langle \mathrm{N}(\mathfrak{n}) \rangle^{k_y/2}\kappa(x_{-\nu,p})^{-1}\mathcal{L}_p(y,\kappa^{-1}w_{\mathrm{Tm}}^{k}\langle \cdot \rangle^{k_y}).
\end{equation*}
Finally, since $S$ is Zariski-dense on $V_\mathbb{Q}$, then the functional equation hold for every $y\in V_\mathbb{Q}$.
\end{proof}

\begin{corollary}
Let $\mathcal{F}$ be a $\Sigma$-smooth base-change to $K$ of a modular form satisfying Conditions \ref{conditions}, let $\mathfrak{n}=(\nu)$ be the prime-to-$p$ part of the level of $\mathcal{F}$, then for all $\kappa\in\mathfrak{X}(\mathrm{Cl}_K(p^{\infty}))$ the distribution $L_p(\mathcal{F},-)$ satisfies the following functional equation
\begin{equation*}
L_p(\mathcal{F},\kappa)=-\epsilon(\mathfrak{n})\mathrm{N}(\mathfrak{n})^{k/2}\kappa(x_{-\nu,p})^{-1}L_p(\mathcal{F},\kappa^{-1}\sigma_p^{k,k}).
\end{equation*}
\end{corollary}

\begin{proof}
Let $x$ be the classic point in the Coleman-Mazur eigencurve such that $\mathcal{F}=f_{x/K}$, then 
specialise the functional equation in Theorem \ref{T15.2} at $x$.
\end{proof}

Notice that corollary above generalises Theorem \ref{T14.2} with no non-critical assumption on the $\Sigma$-smooth base-change Bianchi modular form. In particular, we obtain the functional equation of $L_p(\mathcal{F},-)$ for a  $\Sigma$-smooth small slope base-change $\mathcal{F}$ new at $p$, this case is interesting, considering for example, that when $p$ is split in $K$, small slope is automatic. 

\bibliographystyle{amsalpha}
\bibliography{ref}

\end{document}